\begin{document}

\title*{Spectral Measures on Locally Fields}
\author{Aihua Fan}
\institute{
Aihua Fan \at LAMFA, UMR 7352 CNRS, University of Picardie,
33 rue Saint Leu, 80039 Amiens, France, \email{ai-hua.fan@u-picardie.fr}}
\maketitle

\abstract*{In this paper, we propose to study spectral measures on local fields.
Some basic
results are presented,  including the stability of Bessel sequences under perturbation, the Landau theorem on Beurling density, the law of pure type
of spectral measures, the boundedness of the Radon-Nikodym derivative of
absolutely continuous $F$-spectral measures etc.}

\abstract{In this paper, we propose to study spectral measures on local fields.
Some basic
results are presented,  including the stability of Bessel sequences under perturbation, the Landau theorem on Beurling density, the law of pure type
of spectral measures, the boundedness of the Radon-Nikodym derivative of
absolutely continuous $F$-spectral measures etc.}

\section{Introduction}

Let $K$ be a local field of absolute value $|\cdot|$. The ring of integers in $K$
is denoted by $\mathfrak{D}$ and the Haar measure on $K$ is denoted by $\mathfrak{m}$ or $dx$.
We assume that the Haar measure is normalized so that $\mathfrak{m}(\mathfrak{D})=1$. The ring $\mathfrak{D}$ is the unique maximal compact subring of $K$ and it is the unit ball
$\{x\in K: |x|\le 1\}$. The ball $\{x\in K: |x|<1\}$, denoted $\mathfrak{P}$, is the maximal ideal in $\mathfrak{D}$ and it is principal and prime. The residue class field of $K$ is the field
$\mathfrak{D}/\mathfrak{P}$, which will be denoted by $\mathbf{k}$. Let $\mathfrak{p}$ be a fixed element of $\mathfrak{P}$ of maximal absolute value, called a prime element  of $K$. As an ideal in
$\mathfrak{D}$, $\mathfrak{P}= (\mathfrak{p})=\mathfrak{p}\mathfrak{D}$.  The residue class field
$\mathbf{k}$ is isomorphic to a finite field $\mathbb{F}_q$ where $q=p^c$ is a power of some
prime number $p\ge 2$ ($c\ge 1$ being an integer).
The dual group $\widehat{K}$ of $K$
is isomorphic to $K$. We fix a character $\chi \in \widehat{K}$ such that $\chi$ is equal to $1$ on $\mathfrak{D}$
but is non-constant on $\mathfrak{p}^{-1}\mathfrak{D}$. Then the map $y \mapsto \chi_y$ from
$K$ onto $\widehat{K}$ is an isomorphism, where $\chi_y(x) = \chi(yx)$.

For $d\ge 1$,
$K^d$ denotes the $d$-dimensional $K$-vector space. We endow $K^d$ with the norm
$$
|x|=\max_{1\le j\le d} |x_j|, \quad \mbox{\rm for} \ x = (x_1, \cdots, x_d)\in K^d.
 $$
 The Haar measure on $K^d$
is the product measure $dx_1\cdots dx_d$ which is also denoted by $\mathfrak{m}$, or
$\mathfrak{m}_d$ if it is necessary to point out the dimension.
For $x=(x_1, \cdots, x_d)\in K^d$ and $y=(y_1, \cdots, y_d) \in K^d$, we define
$$
   x\cdot y = x_1 y_1 +\cdots + x_dy_d.
$$
The dual $\widehat{K}^d$ consists of all $\chi_y (\cdot)$
with $y \in K^d$, where $\chi_y (x)=\chi(y\cdot x)$.

Let $\mu$ be a finite Borel measure on $K^d$. The {\em Fourier transform} of
$\mu$ is defined to be
$$
    \widehat{\mu} (y)
    = \int_{K^d} \overline{\chi}_y(x) d\mu(x) \qquad (y \in \widehat{K}^d \simeq K^d).
$$
The Fourier transform $\widehat{f}$ of $f \in L^1(K^d)$ is that of $\mu_f$ where $\mu_f$ is the measure
defined by $d\mu_f = f d\mathfrak{m}$.
For  local fields and Fourier analysis on them, we can refer to \cite{Cassels1986,Ramakrishnan-Valenza1999,Taibleson1975,Vladimirov-Volovich-Zelenov1994}.

In this paper, we propose to study spectral measures and their variants on $K^d$.
Let $\mu$ be a probability Borel measure on $K^d$. We say that $\mu$ is a {\em spectral measure}
if there exists a set $\Lambda \subset \widehat{K}^d$ such that
$\{\chi_\lambda\}_{\lambda \in \Lambda}$ is an orthonormal basis (i.e. a Hilbert basis) of $L^2(\mu)$. Then
$\Lambda$ is called a {\em spectrum} of $\mu$ and we call $(\mu, \Lambda)$ a {\em spectral pair}.

Assume that $\Omega$ is a set in $K^d$
of positive and finite Haar measure. When the restricted measure $\frac{1}{\mathfrak{m}(\Omega)}\mathfrak{m}|_\Omega$
is a spectral measure, we say $\Omega$ is a {\em spectral set}. In this case, instead of saying $(\frac{1}{\mathfrak{m}(\Omega)}\mathfrak{m}|_\Omega, \Lambda)$ is a spectral pair,  we say that
$(\Omega, \Lambda)$ a {\em spectral pair}.

The characters $\chi_\lambda$ ($\lambda \in \widehat{K}^d$) are called exponential
functions on $K^d$.
The existence of exponential Hilbert basis like $\{\chi_\lambda\}_{\lambda\in \Lambda}$
of $L^2(\mu)$
is a strong constraint on the measure.
Here are some weaker requirements.
The set
$\{\chi_\lambda\}_{\lambda \in \Lambda}$ is a {\em Fourier frame} of $L^2(\mu)$ if there exist constants
$A>0$ and $B>0$ such that
\begin{equation}\label{FF}
   A \|f\|^2 \le \sum_{\lambda} |\langle f, \chi_\lambda\rangle_\mu|^2 \le B \|f\|^2,
   \qquad \forall f \in L^2(\mu)
\end{equation}
where $\langle \cdot, \cdot\rangle_\mu$ denotes the inner product in $L^2(\mu)$.
We say $\{\chi_\lambda\}_{\lambda \in \Lambda}$ is a {\em Riesz basis} if it is a Fourier frame as well as a Schauder basis. When  $\{\chi_\lambda\}_{\lambda \in \Lambda}$ is a Riesz basis (resp. Fourier frame) of $L^2(\mu)$, $\mu$ is called {\em  $R$-spectral measure} (resp. {\em $F$-spectral measure}) and $\Lambda$ is called a {\em $R$-spectrum} (resp. {\em  $F$-spectrum}).

If $\{\chi_\lambda\}_{\lambda \in \Lambda}$ only satisfies
the first inequality (resp. the second inequality) in (\ref{FF}), we say that $\Lambda$ is a {\em set of sampling} (resp. a {\em Bessel sequence}) of $L^2(\mu)$.
If for any sequence $\{a_\lambda\}_{\lambda\in \Lambda}\in \ell^2(\Lambda)$,
there exists $f\in L^2(\Omega)$ such that $a_\lambda = \widehat{f}(\lambda)$
for all $\lambda\in \Lambda$, we say that $\Lambda$ is a {\em set of interpolation}
for $L^2(\Omega)$.


An obvious necessary condition for $\mu$ to be a spectral measure with
$\Lambda$ as spectrum is
\begin{equation}\label{orthogonality}
(\Lambda -\Lambda)\setminus \{0\} \subset \mathcal{Z}_\mu: =\{\xi \in \widehat{K}^d: \widehat{\mu}(\xi) =0\}.
\end{equation}
This is actually a necessary and sufficient condition for
$\{\chi_\lambda\}_{\lambda\in \Lambda}$ to be orthogonal in $L^2(\mu)$, because
$$
      \langle \chi_\xi, \chi_\lambda\rangle_\mu = \int \chi_\xi \overline{\chi}_\lambda d\mu = \widehat{\mu}(\lambda -\xi).
  $$

Here is a criterion for $\mu$ to be a spectral measure.

\begin{theorem}\label{Thm-SpectralMeasure} A Borel probability measure on $K^d$ is a spectral measure with $\Lambda \subset
\widehat{K}^d$ as its spectrum iff
\begin{equation}\label{spectral criterion}
\forall \xi \in \widehat{K}^d, \quad \sum_{\lambda \in \Lambda}
 |\widehat{\mu} (\lambda -\xi)|^2 = 1.
 \end{equation}
\end{theorem}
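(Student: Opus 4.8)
The plan is to read \eqref{spectral criterion} as the assertion that the Parseval identity $\sum_{\lambda\in\Lambda}|\langle f,\chi_\lambda\rangle_\mu|^2=\|f\|_\mu^2$ holds for every test exponential $f=\chi_\xi$, and then to upgrade this to an orthonormal basis. First observe that each $\chi_\xi$ is a unit vector, since $|\chi_\xi|\equiv 1$ and $\mu$ is a probability measure, so $\|\chi_\xi\|_\mu^2=\mu(K^d)=1$. Using the identity $\langle\chi_\xi,\chi_\lambda\rangle_\mu=\widehat{\mu}(\lambda-\xi)$ recalled just above, the left-hand side of \eqref{spectral criterion} is exactly $\sum_{\lambda\in\Lambda}|\langle\chi_\xi,\chi_\lambda\rangle_\mu|^2$. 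The forward implication is then immediate: if $\{\chi_\lambda\}_{\lambda\in\Lambda}$ is an orthonormal basis, Parseval's identity applied to $f=\chi_\xi$ gives \eqref{spectral criterion} for every $\xi$.

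For the converse, the key step is to extract orthonormality for free by evaluating \eqref{spectral criterion} at a point $\xi=\lambda_0\in\Lambda$. In the sum the term $\lambda=\lambda_0$ equals $|\widehat{\mu}(0)|^2=1$, so all remaining terms must vanish, forcing $\widehat{\mu}(\lambda-\lambda_0)=0$ for every $\lambda\in\Lambda\setminus\{\lambda_0\}$. As $\lambda_0$ ranges over $\Lambda$ this is precisely the orthogonality condition \eqref{orthogonality}, so $\{\chi_\lambda\}_{\lambda\in\Lambda}$ is an orthonormal system. This is what lets me sidestep the usual difficulty that a diagonal (tight-frame) identity on a spanning family does not by itself control the off-diagonal Gram entries: once orthonormality is known, the Bessel bound becomes automatic.

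With orthonormality in hand, set $V:=\overline{\operatorname{span}}\{\chi_\lambda:\lambda\in\Lambda\}$ and let $P_V$ be the orthogonal projection of $L^2(\mu)$ onto $V$. For any $g$ one has $\|P_Vg\|_\mu^2=\sum_{\lambda\in\Lambda}|\langle g,\chi_\lambda\rangle_\mu|^2$. Taking $g=\chi_\xi$ and invoking \eqref{spectral criterion} gives $\|P_V\chi_\xi\|_\mu^2=1=\|\chi_\xi\|_\mu^2$, whence $\|\chi_\xi-P_V\chi_\xi\|_\mu=0$, i.e. $\chi_\xi\in V$ for every $\xi\in\widehat{K}^d$. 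It then remains only to show that the family $\{\chi_\xi\}_{\xi\in\widehat{K}^d}$ is total in $L^2(\mu)$, since this forces $V=L^2(\mu)$ and hence completeness of the orthonormal system $\{\chi_\lambda\}_{\lambda\in\Lambda}$.

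The main obstacle is precisely this totality. I would argue it by duality: if $f\in L^2(\mu)$ satisfies $\langle f,\chi_\xi\rangle_\mu=0$ for all $\xi$, then, since $\mu$ is finite and $f\in L^2(\mu)\subset L^1(\mu)$, the finite complex measure $f\,d\mu$ has Fourier transform $\widehat{f\,d\mu}(\xi)=\langle f,\chi_\xi\rangle_\mu\equiv 0$; by the uniqueness (injectivity) theorem for the Fourier transform of finite measures on the locally compact abelian group $\widehat{K}^d\simeq K^d$, this yields $f\,d\mu=0$, hence $f=0$ in $L^2(\mu)$. Thus $\{\chi_\xi\}_\xi$ is total, $V=L^2(\mu)$, and $\{\chi_\lambda\}_{\lambda\in\Lambda}$ is an orthonormal basis, i.e. $\mu$ is a spectral measure with spectrum $\Lambda$. (Separability of $L^2(\mu)$, which comes from the second countability of $K^d$, guarantees that $\Lambda$ is countable and that all the sums above are unambiguous.)
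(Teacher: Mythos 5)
Your proposal is correct and follows essentially the same route as the paper: Parseval for the forward direction, evaluation at $\xi=\lambda'\in\Lambda$ to extract orthogonality, the observation that \eqref{spectral criterion} forces each $\chi_\xi$ into the closed span of $\{\chi_\lambda\}_{\lambda\in\Lambda}$, and finally the injectivity of the Fourier transform of the finite measure $f\,d\mu$ to conclude totality. Your projection formulation $\|P_V\chi_\xi\|_\mu=\|\chi_\xi\|_\mu$ is just a restatement of the paper's expansion $\chi_\xi=\sum_{\lambda}\langle\chi_\xi,\chi_\lambda\rangle_\mu\chi_\lambda$, so the two arguments coincide in substance.
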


This theorem in the case $\mathbb{R}^d$
is due to Jorgensen and Pedersen \cite{Jorgensen-Pedersen1998}.
The following theorem says that a Bessel sequence perturbed by a bounded sequence
remains a Bessel sequence. The corresponding result in $\mathbb{R}^d$  was proved by Dutkay, Han, Sun and Weber in \cite{Dutkay-Han-Sun-Weber2011}. The proof in $\mathbb{R}^d$ seems not adaptable to the case of local fields. Our proof will based on the fact that characters in local fields are constant in a neighborhood of the origin.

\begin{theorem}\label{Thm-Bessel} Let $\{\lambda_n\}$ be a Bessel sequence of $L^2(\mu)$
  where $\mu$ is of compact support. Let $\{\gamma_n\}$ be another sequence.  Suppose there exists a constant
  $C>0$ such that
  $$
     \forall n, \quad |\gamma_n -\lambda_n|\le C.
  $$
  Then $\{\gamma_n\}$ is also a Bessel sequence of $L^2(\mu)$.
  \end{theorem}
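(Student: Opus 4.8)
The plan is to exploit the ultrametric structure of $K^d$ together with the defining property of $\chi$—that it is trivial on $\mathfrak{D}$—to show that each perturbation character is \emph{locally constant} at a scale uniform in $n$. Set $\beta_n = \gamma_n - \lambda_n$, so that $|\beta_n| \le C$ for every $n$. Since $\chi_{y+y'} = \chi_y\chi_{y'}$, we have $\chi_{\gamma_n} = \chi_{\lambda_n}\chi_{\beta_n}$, and the idea is to absorb the factor $\chi_{\beta_n}$ into the test function $f$ after decomposing the support of $\mu$ into finitely many pieces on which $\chi_{\beta_n}$ is constant.

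To carry this out, I would first fix an integer $M\ge 0$ with $\operatorname{supp}\mu \subset \mathfrak{p}^{-M}\mathfrak{D}^d$ (possible since $\mu$ has compact support) and an integer $N \ge 0$ with $C \le q^N$. For any $u \in \mathfrak{p}^{N}\mathfrak{D}^d$ and any $n$, the ultrametric inequality yields $|\beta_n \cdot u| \le |\beta_n|\,|u| \le q^{N}q^{-N} = 1$, so $\beta_n\cdot u \in \mathfrak{D}$ and hence $\chi_{\beta_n}(x+u) = \chi_{\beta_n}(x)\chi(\beta_n\cdot u) = \chi_{\beta_n}(x)$. Thus \emph{every} $\chi_{\beta_n}$ is constant on each coset of the compact open subgroup $\mathfrak{p}^{N}\mathfrak{D}^d$. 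Since $\mathfrak{p}^{-M}\mathfrak{D}^d$ is the disjoint union of the finitely many such cosets $B_1,\dots,B_L$ it contains, we may write $\chi_{\beta_n}|_{B_\ell} \equiv c_{n,\ell}$ with $|c_{n,\ell}| = 1$.

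The inner product then decomposes across the partition as
$$\langle f, \chi_{\gamma_n}\rangle_\mu = \sum_{\ell=1}^{L}\overline{c_{n,\ell}}\,\langle f\mathbf{1}_{B_\ell}, \chi_{\lambda_n}\rangle_\mu ,$$
and the elementary inequality $\bigl|\sum_{\ell}a_\ell\bigr|^2 \le L\sum_\ell|a_\ell|^2$, combined with $|c_{n,\ell}|=1$, gives
$$\sum_n |\langle f, \chi_{\gamma_n}\rangle_\mu|^2 \le L\sum_{\ell=1}^{L}\sum_n |\langle f\mathbf{1}_{B_\ell}, \chi_{\lambda_n}\rangle_\mu|^2 .$$
Applying the Bessel bound $B$ of $\{\lambda_n\}$ to each $f\mathbf{1}_{B_\ell}\in L^2(\mu)$, and using that the $B_\ell$ are disjoint so that $\sum_\ell\|f\mathbf{1}_{B_\ell}\|^2 = \|f\|^2$, I would conclude $\sum_n |\langle f, \chi_{\gamma_n}\rangle_\mu|^2 \le LB\|f\|^2$, which is exactly the Bessel inequality for $\{\gamma_n\}$.

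The conceptual content, and the only delicate point, is the \emph{uniformity in $n$}: the whole argument works because a single finite partition $\{B_\ell\}$ trivializes all the characters $\chi_{\beta_n}$ simultaneously, and this is precisely what the uniform bound $|\beta_n|\le C$ secures through the sharp estimate $|\beta_n\cdot u|\le|\beta_n|\,|u|$. This is the feature with no archimedean analogue, explaining why the $\mathbb{R}^d$ proof of \cite{Dutkay-Han-Sun-Weber2011} does not transfer.
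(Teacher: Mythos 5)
Your proof is correct, and it takes a genuinely different route from the paper's. The paper argues in three modular steps: it first rescales the measure so that its support lies in $\mathfrak{D}^d$ (a scaling lemma sends $\{\lambda_n\}$ to a Bessel sequence $\{a^{-1}\lambda_n\}$ of the pushed-forward measure); it then writes each perturbation $\delta_n=\gamma_n-\lambda_n$ as $t_n+\eta_n$ with $t_n$ ranging over a \emph{finite} subset $\Delta$ of the standard quasi-lattice and $|\eta_n|\le 1$, and partitions the \emph{sequence} into the finitely many subsequences $\Lambda_k=\{\lambda_n: t_n=k\}$, using that the Bessel property passes to finite partitions/unions and is invariant under translating a whole subsequence by a fixed $k$; finally it discards the remainders $\eta_n$ because $\chi$ is trivial on $\mathfrak{D}^d$. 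You instead leave both the measure and the sequence untouched and partition the \emph{support of $\mu$}: the ultrametric bound $|\beta_n\cdot u|\le|\beta_n|\,|u|\le 1$ shows that one fixed compact open subgroup $\mathfrak{p}^{N}\mathfrak{D}^d$ trivializes all the characters $\chi_{\beta_n}$ at once, so $\mathrm{supp}\,\mu$ splits into finitely many cosets $B_1,\dots,B_L$ on which each $\chi_{\beta_n}$ is a unimodular constant, and Cauchy--Schwarz plus the Bessel bound applied to the functions $f\mathbf{1}_{B_\ell}$ finishes the argument with explicit constant $LB$, $L=q^{d(M+N)}$. Both proofs hinge on the same non-Archimedean fact --- local constancy of characters, exactly as the paper announces in the introduction --- but your decomposition is on the spatial side while the paper's is on the frequency side; in effect the two are dual to one another. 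What yours buys is self-containedness and a one-shot estimate (no scaling lemma, no translation invariance, no partition of the index set); what the paper's buys is reusability of the intermediate lemmas (the scaling reduction is stated so as to apply equally to spectra, $F$-spectra and $R$-spectra). The quantitative loss in the Bessel constant is of the same nature in both arguments.
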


The following theorem is a version in local fields of Landau's density theorem
which establishes relationship between
the set of sampling and the set of interpolation $\Lambda$ and the Beurling densities $D^+(\Lambda)$, $D^-(\Lambda)$, $D(\Lambda)$ (see Section 5 for the definition). A set $\Lambda \subset K^d$ is said to be (uniformly) {\em discrete} if
$$
   d(\Lambda): = \inf_{\sigma, \tau \in \Lambda; \sigma\not=\tau} |\sigma-\tau| >0.
$$
Any number $\delta$ with $0<\delta\le d(\Lambda)$ will be called separation constant of
$\Lambda$.

\begin{theorem}\label{Thm-Landau} Let $\Omega \subset K^d$ be a Borel set such that
$0<\mathfrak{m}(\Omega)<\infty$ and let $\Lambda\subset \widehat{K}^d$ be a discrete set.
\\
\indent {\rm (1)} \ If $\Lambda$ is a set of sampling of $L ^2(\Omega)$, then
$D^-(\Lambda)\ge \mathfrak{m}(\Omega)$.\\
\indent {\rm (2)} \ If $\Lambda$ is a set of interpolation of $L ^2(\Omega)$, then
$D^+(\Lambda)\le \mathfrak{m}(\Omega)$.\\
\indent {\rm (3)} \ If $\Lambda$ is a $F$-spectrum of $\Omega$, then
$D(\Lambda)=\mathfrak{m}(\Omega)$.
\end{theorem}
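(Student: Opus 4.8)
The plan is to exploit the one feature the authors single out for Theorem~\ref{Thm-Bessel}: if $\lambda$ lies in the ball $B(0,q^{M})=(\mathfrak p^{-M}\mathfrak D)^{d}$ (of measure $q^{Md}$), then $\chi_\lambda$ is constant on every ball of radius $q^{-M}$, since $\chi\equiv1$ on $\mathfrak D$; dually, $f$ is constant on these balls iff $\operatorname{supp}\widehat f\subset B(0,q^{M})$. Thus for a fixed frequency ball $W=B(c,q^{M})$ the relevant objects live in finite-dimensional spaces, and $D^{\pm}(\Lambda)$, $D(\Lambda)$ are recovered by dividing the resulting counts by $\mathfrak m(W)=q^{Md}$ and letting $M\to\infty$. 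I first normalise $c=0$: as $\chi_\lambda=\chi_{c}\,\chi_{\lambda-c}$ with $|\chi_c|\equiv1$, multiplication by $\chi_{c}$ is a unitary of $L^{2}(\Omega)$ sending $\{f:\operatorname{supp}\widehat f\subset B(c,q^{M})\}$ to $\{f:\operatorname{supp}\widehat f\subset B(0,q^{M})\}$, so every count below is independent of $c$ and a single-ball estimate becomes a statement about $\inf_{c}$ or $\sup_{c}$, i.e.\ about $D^{-}$ or $D^{+}$. Since $\Lambda$ is uniformly discrete and $W$ is compact, $\Lambda\cap W$ is finite.

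Suppose first that $\Omega$ is \emph{elementary}, a finite union of radius-$q^{-M}$ balls, so each such ball is contained in or disjoint from $\Omega$ and there are exactly $\mathfrak m(\Omega)q^{Md}$ of the former. For (1) put $V_{W}=\{f\in L^{2}(\Omega):\operatorname{supp}\widehat f\subset W\}$, spanned by the indicators of the balls inside $\Omega$, so $\dim V_{W}=\mathfrak m(\Omega)\mathfrak m(W)$. If $f\in V_{W}$ then $g=\widehat f$ vanishes off $W$, so the sampling inequality reads $\sum_{\lambda\in\Lambda\cap W}|g(\lambda)|^{2}\ge A\|g\|^{2}$; hence $g\mapsto(g(\lambda))_{\lambda\in\Lambda\cap W}$ is injective on $V_{W}$ and $\#(\Lambda\cap W)\ge\dim V_{W}=\mathfrak m(\Omega)\mathfrak m(W)$, giving $D^{-}(\Lambda)\ge\mathfrak m(\Omega)$. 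For (2) I use that interpolation is the Riesz--Fischer bound $\|\sum_{\lambda\in F}a_\lambda\chi_\lambda\|_{L^{2}(\Omega)}^{2}\ge A'\sum_{\lambda\in F}|a_\lambda|^{2}$ for every finite $F$. With $F=\Lambda\cap W$ and $c=0$ each $\chi_\lambda$ in the sum is constant on the radius-$q^{-M}$ balls, so $a\mapsto(\sum_{\lambda}a_\lambda\chi_\lambda(x_{b}))_{b\subset\Omega}$ (with $x_{b}$ the centre of $b$) is injective: if all its coordinates vanish then $\sum a_\lambda\chi_\lambda\equiv0$ on $\Omega$, whence $a=0$. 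Thus $\#(\Lambda\cap W)\le\#\{b\subset\Omega\}=\mathfrak m(\Omega)\mathfrak m(W)$ and $D^{+}(\Lambda)\le\mathfrak m(\Omega)$.

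The main obstacle is to drop the elementariness of $\Omega$. For a general measurable $\Omega$ a radius-$q^{-M}$ ball usually meets both $\Omega$ and its complement in positive measure, the counts ``balls inside'' and ``balls meeting'' cease to equal $\mathfrak m(\Omega)\mathfrak m(W)$, and the bare injectivity arguments degrade (they yield only $D^{-}\ge0$ and $D^{+}\le\mathfrak m(X)$ for the ambient window $X$). I would resolve this by Landau's concentration-operator method, which the ultrametric structure renders completely explicit: fix a large spatial window $X=(\mathfrak p^{-N}\mathfrak D)^{d}$ with $\mathfrak m(\Omega\setminus X)$ small, let $E=\{f:\operatorname{supp}f\subset X,\ \operatorname{supp}\widehat f\subset W\}$ with orthonormal basis $e_{b}=\mathfrak m(b)^{-1/2}\mathbf 1_{b}$ over the balls $b\subset X$, and form the concentration operator $T=P_{E}\,M_{\mathbf 1_\Omega}\,P_{E}$ on $E$. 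Then $T$ is a genuine finite matrix with entries built from $\widehat{\mathbf 1_\Omega}$, its spectrum lies in $[0,1]$, and $\operatorname{tr}T=\sum_{b\subset X}\mathfrak m(b\cap\Omega)/\mathfrak m(b)=\mathfrak m(W)\,\mathfrak m(\Omega\cap X)\approx\mathfrak m(\Omega)\mathfrak m(W)$. The genuinely delicate step is the \emph{eigenvalue count}: one must show that the number of eigenvalues of $T$ above a threshold $1-\varepsilon$ is $(\mathfrak m(\Omega)+o(1))\mathfrak m(W)$, i.e.\ control the plunge region of intermediate eigenvalues. Granting this, comparing the sampling (resp.\ interpolation) hypothesis against the near-$1$ eigenspace of $T$ upgrades the elementary estimates to $\#(\Lambda\cap W)\ge(\mathfrak m(\Omega)-\varepsilon)\mathfrak m(W)$ (resp.\ $\le(\mathfrak m(\Omega)+\varepsilon)\mathfrak m(W)$), and $\varepsilon\to0$, $M\to\infty$ prove (1) and (2) in general.

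For (3), a Fourier frame satisfies the lower bound in (\ref{FF}), hence is a set of sampling, so (1) gives $D^{-}(\Lambda)\ge\mathfrak m(\Omega)$. The matching inequality $D^{+}(\Lambda)\le\mathfrak m(\Omega)$ I would again extract from the localisation above, now fed by the upper (Bessel) bound: on the near-$1$ eigenspace of $T$ the localised frame operator has trace $\approx\mathfrak m(\Omega)\mathfrak m(W)$ while $\|\chi_\lambda\|_{L^{2}(\Omega)}^{2}=\mathfrak m(\Omega)$ for each $\lambda$, which bounds $\#(\Lambda\cap W)$ from above by $(\mathfrak m(\Omega)+o(1))\mathfrak m(W)$. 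Combining the two gives $D^{-}(\Lambda)=D^{+}(\Lambda)=\mathfrak m(\Omega)$, so the uniform density exists and equals $\mathfrak m(\Omega)$. I expect the crux here to be isolating the \emph{sharp} constant $\mathfrak m(\Omega)$ rather than a frame-bound multiple of it, equivalently showing that an exponential frame carries no asymptotic redundancy; this is the point that genuinely uses both frame inequalities at once.
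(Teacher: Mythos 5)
Your treatment of the \emph{elementary} case ($\Omega$ a finite union of balls of radius $q^{-M}$) is correct and complete, and it is in substance the paper's key computation in disguise: your space $V_W$ is the Fourier picture of the eigenvalue-$1$ eigenspace of the Landau operator $T_\Omega P_W T_\Omega$, whose spectrum the paper shows to be exactly $\{0,1\}$ with rank $\mathfrak{m}(\Omega)\mathfrak{m}(W)$ (Lemma \ref{Eigen-ball} and the lemma following it, resting on Lemma \ref{D-int} and Lemma \ref{L_property}(e),(f)). Your injectivity counts replace the paper's Weyl--Courant estimates and even avoid the error terms $c\,q^{-a}\mathfrak{m}(\Delta)$ that the paper carries; this part is a clean simplification (though the equivalence of interpolation with the lower Riesz--Fischer inequality, which you invoke for (2), is a standard but nontrivial duality that deserves a citation or proof).

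The genuine gap is in the passage to general Borel $\Omega$, and it is not where you place it. The step you ``grant'' is in fact the easy one here: your concentration operator $T=P_E M_{1_\Omega}P_E$ is \emph{diagonal} in the basis $\{e_b\}$ (the $e_b$ have disjoint supports), with eigenvalues the local densities $\mathfrak{m}(\Omega\cap b)/\mathfrak{m}(b)$, so the count of eigenvalues above $1-\varepsilon$ follows from the Lebesgue density theorem (martingale convergence along the ball filtration); there is no plunge region to control. What is missing is the step you dispose of in one clause, ``comparing the sampling (resp.\ interpolation) hypothesis against the near-$1$ eigenspace''. A near-$1$ eigenfunction $f$ is no longer supported in $\Omega$, so to apply the sampling inequality you must pass to $1_\Omega f$; but then $\widehat{1_\Omega f}$ is no longer supported in $W$, and the sampling sum contains $\sum_{\lambda\in\Lambda\setminus W}|\widehat{1_\Omega f}(\lambda)|^2$, which you have no right to discard, since a set of sampling carries no upper (Bessel) inequality. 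Supplying the missing bound is precisely the role of the two lemmas of Section 5.3 of the paper (the auxiliary function $h$ with ${\rm supp}\, h\subset B(0,\delta/2)$, $|\check h|\ge 1$ on $\Omega$, combined with the Weyl--Courant estimates of Lemma \ref{L_property}(h)): because $\Lambda$ is $\delta$-separated and the Fourier transform of any $g$ supported in $B(0,q^{N})$ is constant on balls of radius $q^{-N}$, one gets the substitute Bessel bound $\sum_{\lambda\in\Lambda}|\widehat g(\lambda)|^2\le q^{Nd}\|g\|^2$ once $q^{-N}<\delta$. With that ingredient your argument for (1) closes (fix the spatial window $X=B(0,q^N)$ first, then take $\varepsilon$ small against $q^{Nd}$, then $M\to\infty$), and for (2) one additionally needs the trace identities of Lemma \ref{L_property}(e),(f) to cap the number of eigenvalues above the fixed threshold; without it, the outline is not a proof.

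For (3), the lower bound via (1) is fine, but the upper bound cannot be obtained along the lines you propose, because the statement you correctly identify as the crux --- that an exponential frame carries no asymptotic redundancy --- is \emph{false}. Take $K=\mathbb{Q}_p$, $d=1$, $\Omega=\mathbb{Z}_p$, and $\Lambda=\mathbb{L}\cup(\mathbb{L}+p)$ with $\mathbb{L}$ the standard quasi-lattice: multiplication by $\chi_p$ is unitary on $L^2(\mathbb{Z}_p)$, so $\Lambda$ is a union of two orthonormal bases, hence a uniformly discrete tight Fourier frame with $A=B=2$, yet $D^+(\Lambda)=D^-(\Lambda)=2\neq 1=\mathfrak{m}(\Omega)$. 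Your trace comparison can only ever produce bounds of the form $D^+(\Lambda)\le B$ (a frame-bound multiple), and no argument can sharpen this for $F$-spectra. The two-sided equality genuinely requires the interpolation property: it is the statement for $R$-spectra (Riesz bases), where exactness makes $\Lambda$ a set of interpolation and (3) follows at once from (1) and (2). So here the obstruction is not a missing lemma in your write-up but the assertion itself, which should be read with ``$R$-spectrum'' in place of ``$F$-spectrum'' (for $F$-spectra one retains only $D^-(\Lambda)\ge\mathfrak{m}(\Omega)$, plus finiteness of $D^+(\Lambda)$ from the Bessel bound).
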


The above two theorems are fundamental. They allow us to establish the following two basic results on spectral measures.

\begin{theorem} A compactly supported $F$-spectral measure on $K^d$ must be of pure type
in the sense that it is either finitely discrete, or singularly continuous or absolutely
continuous.
\end{theorem}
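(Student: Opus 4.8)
\emph{Proof proposal.} The plan is to argue via the Lebesgue decomposition $\mu=\mu_d+\mu_{sc}+\mu_{ac}$ (discrete, singularly continuous, absolutely continuous parts with respect to $\mathfrak m$) and to rule out any nontrivial mixture. I first record an elementary restriction principle: if $\{\chi_\lambda\}_{\lambda\in\Lambda}$ is a Fourier frame of $L^2(\mu)$ with bounds $A,B$ and $E\subset K^d$ is Borel, then extending functions of $L^2(\mu|_E)$ by zero preserves both the norms $\|\cdot\|_{\mu|_E}=\|\cdot\|_\mu$ and the coefficients $\langle\cdot,\chi_\lambda\rangle$, so $\{\chi_\lambda\}_{\lambda\in\Lambda}$ is again a Fourier frame of $L^2(\mu|_E)$ with the same bounds $A,B$. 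Hence each nonzero component of $\mu$ is itself an $F$-spectral measure with spectrum $\Lambda$.

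The engine of the argument is a local mass estimate exploiting the fact stressed before Theorem~\ref{Thm-Bessel}, that characters are locally constant. Fix $n$ and a ball $Q=x+\mathfrak p^{n}\mathfrak D^{d}$. For every $\lambda$ with $|\lambda|\le q^{n}$ one has $\lambda\cdot(y-y')\in\mathfrak D$ whenever $y,y'\in Q$, so $\chi_\lambda$ is constant on $Q$; taking $f=\mathbf{1}_{Q}\in L^2(\mu)$ gives $|\langle f,\chi_\lambda\rangle_\mu|^2=\mu(Q)^2$ for each such $\lambda$. Writing $N_n=\#\{\lambda\in\Lambda:|\lambda|\le q^{n}\}$ and discarding the remaining terms in the upper frame inequality yields $N_n\,\mu(Q)^2\le B\,\mu(Q)$, that is,
$$
   \mu\big(x+\mathfrak p^{n}\mathfrak D^{d}\big)\le \frac{B}{N_n}\qquad(\forall x,\ \forall n).
$$
The same computation with one fixed ball of positive mass shows each $N_n$ is finite, so $N_n\to\infty$ whenever $\Lambda$ is infinite. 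Two things follow immediately. If $\mu$ has an atom of mass $m_0$, the displayed bound forces $N_n\le B/m_0$, so $\Lambda$ is finite, $L^2(\mu)$ is finite dimensional, and $\mu$ is finitely discrete; thus a non-discrete $F$-spectral measure is continuous. If $\mu$ is continuous, the bound becomes $\mu(Q)/\mathfrak m(Q)\le B\,q^{nd}/N_n$ for every ball $Q$ of radius $q^{-n}$.

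It remains to separate the absolutely continuous and singularly continuous types, which I expect to be the main obstacle. Suppose $\mu$ is continuous with $\mu_{ac}\neq0$; the goal is $\mu_{sc}=0$. Choosing $E=\{c\le d\mu_{ac}/d\mathfrak m\le C'\}$ minus the $\mathfrak m$-null carrier of $\mu_{sc}$, for suitable $0<c<C'$, produces a set of positive finite Haar measure on which $\mu$ and $\mathfrak m$ are comparable, with density $\phi=d\mu|_E/d\mathfrak m\in[c,C']$. Substituting $f=g/\phi$ into the lower frame inequality for $\mu|_E$ gives $\sum_\lambda|\int_E g\,\overline\chi_\lambda\,d\mathfrak m|^2\ge (A/C')\|g\|^2_{L^2(E,\mathfrak m)}$, so $\Lambda$ is a set of sampling for $L^2(E)$. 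Theorem~\ref{Thm-Landau}(1) then gives $D^-(\Lambda)\ge\mathfrak m(E)>0$, i.e. $N_n\gtrsim q^{nd}$, and feeding this into the continuous estimate yields a \emph{uniform} bound $\limsup_n \mu(x+\mathfrak p^{n}\mathfrak D^{d})/\mathfrak m(x+\mathfrak p^{n}\mathfrak D^{d})\le B/D^-(\Lambda)<\infty$ valid at every $x$. On the other hand, the nested balls $x+\mathfrak p^{n}\mathfrak D^{d}$ form a filtration, and the martingale (Besicovitch-type) differentiation theorem on $K^{d}$ forces this upper density to equal $+\infty$ at $\mu_{sc}$-almost every point. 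Hence $\mu_{sc}=0$ and $\mu$ is purely absolutely continuous. Combined with the discrete case this proves purity of type. The delicate points to verify carefully are the transfer of the sampling inequality to $L^2(E,\mathfrak m)$ and the precise matching of the origin-centred counts $N_n$ with the Beurling density $D^-(\Lambda)$ defined in Section~5.
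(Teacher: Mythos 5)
Your overall route is the one the paper actually intends: the paper gives no detailed proof of this theorem, saying only that it ``can be proved as in the Archimedean case, by using Theorems \ref{Thm-Bessel} and \ref{Thm-Landau}'', i.e.\ by adapting He, Lai and Lau \cite{He-Lai-Lau2013}. Most of your reconstruction is sound: the restriction principle (zero-extension preserves both frame bounds, so every piece of the Lebesgue decomposition inherits the frame with the same spectrum $\Lambda$); the local mass estimate $N_n\,\mu(Q)^2\le B\,\mu(Q)$ via local constancy of characters, which is the correct non-Archimedean substitute for the ``almost constant on small cubes'' estimates used in $\mathbb{R}^d$; the atom case (bounded $N_n$ forces $\Lambda$ finite, hence $\dim L^2(\mu)<\infty$, hence $\mu$ finitely discrete --- in fact you could skip the truncation $|\lambda|\le q^n$ here, since $|\langle 1_{\{x_0\}},\chi_\lambda\rangle_\mu|=\mu(\{x_0\})$ for \emph{every} $\lambda$); the transfer of the lower frame bound into a sampling inequality for $L^2(E,\mathfrak m)$ via $f=g/\phi$; and the differentiation theorem for singular measures, which in the ultrametric setting follows at once from the Vitali property of balls (any two balls are nested or disjoint) and replaces the covering argument of \cite{He-Lai-Lau2013}.

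The one genuine gap is your application of Theorem \ref{Thm-Landau}(1) to $\Lambda$ itself: that theorem assumes $\Lambda$ is uniformly discrete, $d(\Lambda)>0$, and an $F$-spectrum need not be. For example, if $(\mu,\Lambda_0)$ is a spectral pair with ${\rm supp}\,\mu\subset\mathfrak D^d$ and $\Lambda_0=\{\lambda_k\}$, then $\Lambda_0\cup\{\lambda_k+\mathfrak p^k\}_k$ is still a Fourier frame with bounds $A=B=2$ (each character merely repeats, because $\chi_{\lambda+\eta}=\chi_\lambda$ on $\mathfrak D^d$ whenever $|\eta|\le 1$), yet $d(\Lambda)=0$. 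So the citation is outside the theorem's hypotheses as written; this is exactly the point where the paper expects Theorem \ref{Thm-Bessel} and its lemmas to intervene, and it is repairable by the same local-constancy idea you already use. First rescale as in Lemma \ref{Bessel-scaled} so that ${\rm supp}\,\mu\subset\mathfrak D^d$ (this rescales $E$ and $\Lambda$ too and only changes constants). Then any two points of $\Lambda$ lying in the same unit ball define the \emph{same} character on ${\rm supp}\,\mu$; testing the Bessel inequality on $f=\chi_{\lambda_0}$, whose coefficient at each such duplicate equals $\|\chi_{\lambda_0}\|_\mu^2=1$, shows that every unit ball contains at most $B$ points of $\Lambda$. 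Let $\Lambda'$ consist of one representative per unit ball: it is uniformly discrete with $d(\Lambda')\ge q$; it is still a set of sampling for $L^2(E)$, because coefficients $\widehat g(\lambda)$ coincide for $\lambda$'s in the same unit ball (as $E\subset\mathfrak D^d$), so the sampling sum over $\Lambda$ exceeds the one over $\Lambda'$ by at most the factor $B$; and $N_n(\Lambda)\ge N_n(\Lambda')$. Applying Theorem \ref{Thm-Landau}(1) to $\Lambda'$ then yields $N_n(\Lambda)\ge N_n(\Lambda')\ge c\,\mathfrak m(E)\,q^{nd}$ for large $n$, after which the rest of your argument goes through unchanged.
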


This result of pure type in $\mathbb{R}^d$ is due to He, Lai and Lau \cite{He-Lai-Lau2013}.
The following boundedness of the density of absolutely continuous F-spectral measures
in $\mathbb{R}^d$ is due to Lai \cite{Lai2011}.

\begin{theorem} Let $\mu$ be a compactly supported and absolutely
continuous probability measure on $K^d$ with Radon-Nikodym derivative $\phi$. If $\mu$ is a $F$-spectral measure, then $A\le \phi(x)\le B$ almost everywhere on the support
of $\mu$, where $0<A\le B<\infty$ are two constants.
\end{theorem}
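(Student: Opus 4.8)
The plan is to exploit the feature of local fields already invoked for Theorem~\ref{Thm-Bessel}: characters are locally constant, so on a small ball every ``low-frequency'' character is literally constant. Write $d\mu=\phi\,d\mathfrak{m}$, let $\Omega=\mathrm{supp}\,\mu$ (compact) and let $\Lambda$ be an $F$-spectrum with frame bounds $A,B$. For $x_0\in K^d$, $r=q^{-k}$, take the test function $f=\mathbf{1}_{B(x_0,r)}$. Then $\langle f,\chi_\lambda\rangle_\mu=\widehat{\phi\mathbf{1}_{B(x_0,r)}}(\lambda)$ and $\|f\|_\mu^2=\mu(B(x_0,r))$. The key algebraic fact is that $\chi_\lambda$ is constant on $B(x_0,r)$ precisely when $\lambda$ annihilates $B(0,r)$, i.e.\ when $\lambda\in B(0,q^{k})$, a condition independent of $x_0$; moreover $\mathfrak{m}(B(x_0,r))\,\mathfrak{m}(B(0,q^{k}))=1$ with our normalization. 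Writing $N(r)=\#\bigl(\Lambda\cap B(0,q^{k})\bigr)$, each of these $N(r)$ coefficients has modulus exactly $\mu(B(x_0,r))$.

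For the \emph{upper} bound I keep only these low-frequency terms in the upper frame inequality:
$$
N(r)\,\mu(B(x_0,r))^2\le\sum_{\lambda\in\Lambda}|\langle f,\chi_\lambda\rangle_\mu|^2\le B\,\mu(B(x_0,r)),
$$
so that $\mu(B(x_0,r))\le B/N(r)$. Dividing by $\mathfrak{m}(B(x_0,r))$ and using the measure identity gives $\mu(B(x_0,r))/\mathfrak{m}(B(x_0,r))\le B\big/\bigl(\#(\Lambda\cap B(0,q^{k}))/\mathfrak{m}(B(0,q^{k}))\bigr)$. Letting $r\to0$ and invoking the Lebesgue differentiation theorem on $K^d$ (the balls form a nested martingale filtration, so differentiation of $\phi\in L^1$ is valid), the left side tends to $\phi(x_0)$ for a.e.\ $x_0$, and I obtain $\phi(x_0)\le B/D^{-}(\Lambda)$. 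This is finite provided $D^{-}(\Lambda)>0$, which I get from Theorem~\ref{Thm-Landau}(1): on a level set $\{a\le\phi\le b\}$ of positive measure, the frame inequality for $L^2(\mu)$ restricts to a genuine frame, hence a sampling inequality, for $L^2$ of that set, forcing $D^{-}(\Lambda)\ge\mathfrak{m}(\{a\le\phi\le b\})>0$.

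For the \emph{lower} bound I argue by contradiction using the \emph{unweighted} Bessel property of $\Lambda$. Because $\{\chi_\lambda\}$ is a Bessel sequence in $L^2(\mu)$ and $\phi>0$ on a set that (by Lebesgue density) contains a ball on which $\mu$ dominates a multiple of $\mathfrak{m}$, the set $\Lambda$ is relatively separated, and hence a Bessel sequence for $L^2(\Omega,d\mathfrak{m})$: there is $B_0$ with $\sum_\lambda|\widehat{h}(\lambda)|^2\le B_0\|h\|_{L^2(\mathfrak{m})}^2$ for all $h$ supported in $\Omega$. Now fix $\epsilon>0$ and suppose $S=\{\phi<\epsilon\}\cap\{\phi>0\}$ has positive measure. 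For $g\in L^2(\mu)$ supported on $S$ the lower frame bound gives $A\int_S|g|^2\phi\,d\mathfrak{m}\le\sum_\lambda|\widehat{g\phi}(\lambda)|^2$, while applying the plain Bessel bound to $h=g\phi$ and using $\phi<\epsilon$ on $S$ gives $\sum_\lambda|\widehat{g\phi}(\lambda)|^2\le B_0\int_S|g|^2\phi^2\,d\mathfrak{m}\le B_0\,\epsilon\int_S|g|^2\phi\,d\mathfrak{m}$. Choosing $g$ with $\|g\|_\mu\neq0$ forces $A\le B_0\,\epsilon$; thus for $\epsilon<A/B_0$ the set $S$ is null, i.e.\ $\phi\ge A/B_0$ a.e.\ on $\Omega$.

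Combining the two halves yields $A/B_0\le\phi\le B/D^{-}(\Lambda)$ a.e.\ on $\Omega$, which is the claim. The routine parts are the local-constancy bookkeeping and the two limiting arguments. The main obstacle is the genuinely local-field input behind the lower bound, namely producing the \emph{uniform} unweighted Bessel constant $B_0$: I must show that a relatively separated $\Lambda\subset\widehat{K}^d$ is Bessel for $L^2$ of a bounded set, and that the frame hypothesis indeed forces relative separation (equivalently finite upper Beurling density). Here the ultrametric geometry and the fact that compactly supported functions have $\widehat{h}$ constant on cosets of a fixed lattice-like ball should make the estimate cleaner than its $\mathbb{R}^d$ counterpart.
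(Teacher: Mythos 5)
The paper itself gives no details for this theorem: it only records that Theorems~1.4 and~1.5 ``can be proved as in the Archimedean case, by using Theorems \ref{Thm-Bessel} and \ref{Thm-Landau}'', i.e.\ by transposing Lai's argument from \cite{Lai2011}. Your proposal is exactly that program, and the parts you actually wrote out are correct. The upper-bound half is sound and is, if anything, cleaner than the Archimedean original: for $r=q^{-k}$ and $|\lambda|\le q^{k}$ the character $\chi_\lambda$ is genuinely constant on $B(x_0,r)$, so each low-frequency coefficient of $1_{B(x_0,r)}$ has modulus exactly $\mu(B(x_0,r))$; the upper frame bound then yields $\mu(B(x_0,r))/\mathfrak{m}(B(x_0,r))\le B\big/\bigl(\sharp(\Lambda\cap B(0,q^{k}))\,q^{-dk}\bigr)$ (your identity $\mathfrak{m}(B(x_0,q^{-k}))\,\mathfrak{m}(B(0,q^{k}))=1$ is right in $K^d$), differentiation of $\phi\in L^1$ along the nested ball filtration is valid, and your device of passing to a level set $E=\{a\le\phi\le b\}$ with $g=h/\phi$ to get a sampling inequality for $L^2(E)$, hence $D^-(\Lambda)\ge\mathfrak{m}(E)>0$ from Theorem \ref{Thm-Landau}(1), is precisely Lai's. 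The lower-bound half ($A\le B_0\,\epsilon$ on $S=\{0<\phi<\epsilon\}$) is also the right argument and the inequalities check, including $h=g\phi\in L^2(\mathfrak{m})$.

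The genuine gap is the one you flag yourself: the uniform unweighted Bessel constant $B_0$ is asserted, not proved, and the whole lower bound (hence the theorem) rests on it. This is exactly the point where the paper intends Theorem \ref{Thm-Bessel} to enter (a separated set of finite upper density is a boundedly perturbed finite union of quasi-lattices, and quasi-lattices are orthonormal bases for $L^2$ of balls). In fact both missing steps close in a few lines in the ultrametric setting, even without Theorem \ref{Thm-Bessel}. (i) \emph{Frame forces bounded multiplicity:} if ${\rm supp}\,\mu\subset B(0,q^m)$ and $|\lambda-\lambda_0|\le q^{-m}$, then $\chi_\lambda=\chi_{\lambda_0}$ on the support, so testing the Bessel inequality on $f=\chi_{\lambda_0}$ gives $\sharp\bigl(\Lambda\cap B(\lambda_0,q^{-m})\bigr)\le B$; since the balls of radius $q^{-m}$ partition $\widehat{K}^d$, $\Lambda$ splits into at most $\lceil B\rceil$ subsets each meeting every such ball in at most one point. (ii) \emph{Such a set is Bessel for $L^2(B(0,q^m),\mathfrak{m})$:} for $h$ supported in $B(0,q^m)$, $\widehat{h}$ is constant on every ball $C$ of radius $q^{-m}$, so $|\widehat{h}(\lambda)|^2=q^{dm}\int_{C_\lambda}|\widehat{h}|^2\,d\mathfrak{m}$ with the $C_\lambda$ pairwise disjoint within each subset, and Plancherel gives $\sum_{\lambda\in\Lambda}|\widehat{h}(\lambda)|^2\le \lceil B\rceil\,q^{dm}\|h\|^2$, i.e.\ $B_0\le\lceil B\rceil q^{dm}$ --- your closing remark about local constancy of $\widehat{h}$ is precisely the mechanism, and it makes this route shorter than the perturbation route. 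Two smaller repairs: Theorem \ref{Thm-Landau}(1) is stated for \emph{discrete} $\Lambda$, so before invoking it you should prune $\Lambda$ to one representative per ball of radius $q^{-m}$ (deleted characters coincide on $E$ with retained ones, so sampling survives with constant worsened by at most $B$, while $D^-(\Lambda)\ge D^-(\Lambda')$); and your argument yields the lower bound $\phi\ge A/B_0$ a.e.\ on $\{\phi>0\}$, which is how ``on the support of $\mu$'' must be read here (the upper bound holds a.e.\ on all of $K^d$).
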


The existence of spectral pair $(\Omega, \Lambda)$ on Euclidean space $\mathbb{R}^d$
goes back to Segal's problem of commutativity  of the partial derivative operators (1958). Fuglede \cite{Fuglede1974} proved that the commutativity is equivalent to  the existence of spectrum. Fuglede also proved that it is the case for the region that is a lattice tile.
Fuglede conjecture states that it is the case iff the region is a tile (not necessarily
a lattice tile). But Tao \cite{Tao2004} disproved this for $d\ge 5$. Jorgensen and Pedersen
\cite{Jorgensen-Pedersen1996,Jorgensen-Pedersen1998} discovered the first singular spectral
measure (a self-similar measure). Their works were followed by Strichartz \cite{Strichartz1998,Strichartz2000}, and {\L}aba and Wang \cite{Laba-Wang2002}, and many others. In the non-Archimedean case, there are many to do.

We will sketch the proofs of Theorems 1.1, 1.2 and 1.3. respectively in Sections 3, 4 and 5.
Theorems 1.4 and 1.5 can be proved as in the  Archimedean case, by using Theorems 1.2 and 1.3, but
details are not given here. Before proving  Theorems 1.1, 1.2 and 1.3,
in Section 2, we will give some preliminaries concerning the structure of local fields, quasi-lattices and some Fourier integrals.
\medskip


{\em Addendum.} There is a progress in the field of $p$-adic numbers \cite{FFS}, where it is proved that {\em a compact open set in $\mathbb{Q}_p$ is a spectral set if and only if
it is a tile}. This tile must be lattice tile and it is characterized by a special homogeneity.   Without loss of generality, we can only consider  compact open sets in $\mathbb{Z}_p$. Let $\Omega = T \oplus p^n \mathbb{Z}_p$ be a compact  open set in $\mathbb{Z}_p$
where $T\subset \mathbb{Z}_p$ be a finite set, which can be assumed to be a subset of
$\mathbb{N}$ or of $\mathbb{Z}/p^n\mathbb{Z}$, and $n\ge 1$ is an integer. The notation $\oplus$ means that $\Omega$ is a disjoint union of $t + p^n \mathbb{Z}_p$ ($t\in T$). The homogeneity of $\Omega$ is the
homogeneity of $T$ described by (c) in the following theorem which gives a characterization of spectral sets in $\mathbb{Z}/p^n\mathbb{Z}$. See Fig. 1 for a geometric representation of
this kind of homogeneity.

\begin{theorem}[\cite{FFS}]
	Let $T\subset \mathbb{Z}/p^n\mathbb{Z}$. The following propositions are equivalent: \\
  \indent {\rm (a)} \ $T$ is a spectral set in $\mathbb{Z}/p^n\mathbb{Z}$;\\
  \indent {\rm (b)} \	$T$ is a tile of $\mathbb{Z}/p^n\mathbb{Z}$;\\
  \indent {\rm (c)} \	 For any $ i=1, 2, \cdots, n-1$,
   ${\rm Card} (T\!\! \mod{p^i})=p^{k_i}$ for some integer $k_i\in \mathbb{N}$.
\end{theorem}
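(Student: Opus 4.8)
The plan is to route both equivalences through the combinatorial condition (c). Identify $\Z/p^n\Z$ with the leaves of a rooted $p$-ary tree of depth $n$ via base-$p$ expansion, so that reduction modulo $p^i$ is truncation to level $i$ and $N_i:=\mathrm{Card}(T\bmod p^i)$ is the number of level-$i$ ancestors of $T$ (it is convenient to include the endpoints $N_0=1$ and $N_n=\mathrm{Card}(T)$, the latter being automatically a power of $p$ for tiles and for spectral sets). Call $T$ \emph{homogeneous} if at every level each ancestor of $T$ has the same number, $1$ or $p$, of children that are again ancestors of $T$; the positions where this number is $p$ form the \emph{branching set} $A\subseteq\{1,\dots,n\}$. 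I would first check that (c) is equivalent to homogeneity, and then prove homogeneous $\Leftrightarrow$ tile and homogeneous $\Leftrightarrow$ spectral, so that (a), (b), (c) coincide. The equivalence with (c) is elementary: every level-$(i-1)$ ancestor has between $1$ and $p$ children that are ancestors, whence $N_{i-1}\le N_i\le p\,N_{i-1}$; if all $N_i$ are powers of $p$ the ratio $N_i/N_{i-1}$ is a power of $p$ in $[1,p]$, hence $1$ or $p$, and a ratio $p$ (resp.\ $1$) forces every active ancestor to branch fully (resp.\ to have a unique child), which is homogeneity. Conversely homogeneity gives $N_i=p^{\mathrm{Card}(A\cap\{1,\dots,i\})}$.

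For the constructive directions I would induct on $n$, peeling the top level through $\pi\colon\Z/p^n\Z\to\Z/p^{n-1}\Z$ (reduction mod $p^{n-1}$), whose kernel is the order-$p$ subgroup $H=p^{n-1}\Z/p^n\Z$. If $n\in A$ then $T=\pi^{-1}(\bar T)$ is $H$-periodic with $\bar T:=\pi(T)$ homogeneous in $\Z/p^{n-1}\Z$, and the product structure $L^2(T)\cong L^2(\bar T)\otimes L^2(\Z/p\Z)$ lets me lift a tiling complement, respectively a spectrum, of $\bar T$. If $n\notin A$ then $\pi|_T$ is a bijection onto a homogeneous $\bar T$, and I adjoin the whole subgroup $H$ to the complement, respectively modulate the lifted spectrum by the characters detecting the top digit. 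In both cases a cardinality-and-covering check closes the step, and carries in $\Z/p^n\Z$ do no harm because one of the two factors always consists of complete $H$-cosets.

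The content lies in the two necessity directions, each resting on a dichotomy at the top level that feeds the same induction. For a tiling $T\oplus B=\Z/p^n\Z$ one has $T(x)B(x)\equiv\prod_{j=1}^{n}\Phi_{p^j}(x)\pmod{x^{p^n}-1}$, so the irreducible factor $\Phi_{p^n}$ divides $T(x)$ or $B(x)$; since $\Phi_{p^n}\mid T(x)$ is equivalent to $T$ being $H$-periodic, one of $T,B$ is $H$-periodic, the images under $\pi$ form a tiling of $\Z/p^{n-1}\Z$, and induction yields homogeneity, hence (c). For a spectral pair $(T,S)$ I would read off $Z(\hat T)=\{\xi\neq 0:\,n-v_p(\xi)\in S_T\}$, where $S_T=\{j:\Phi_{p^j}\mid T(x)\}$ and $v_p$ is the $p$-adic valuation, and combine $S-S\subseteq\{0\}\cup Z(\hat T)$ with $\mathrm{Card}(S)=\mathrm{Card}(T)$. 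The dichotomy is now whether $n\in S_T$: if so, $T$ is $H$-periodic; if not, no difference in $S$ has valuation $0$, so $S$ lies in a single coset of $p\Z/p^n\Z$, and after translating $S$ the characters $\chi_s$ factor through $\pi$, which forces $\pi|_T$ to be injective. Either way I reduce to $\Z/p^{n-1}\Z$ and finish by induction.

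The hardest point is the spectral necessity: one must confirm that the zero-set of $\hat T$ is governed exactly by the valuation description above and that confinement of $S$ to a coset of $p\Z/p^n\Z$ genuinely forces $\pi|_T$ to be injective (otherwise characters factoring through $\pi$ could not separate the points of $T$, contradicting that $\{\chi_s|_T\}_{s\in S}$ is a basis). This is more delicate than the tiling dichotomy, which follows at once from unique factorization of the cyclotomic product. A useful warning is that the naive guess ``one of $T,S$ is $H$-periodic'' is false, as $T=S=\{0,2\}$ in $\Z/8\Z$ shows; the correct spectral dichotomy must be phrased through the membership $n\in S_T$ and the coset confinement of $S$, not through periodicity of $S$.
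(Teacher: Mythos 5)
A preliminary remark: this survey does not prove the theorem at all --- it is quoted in the Addendum from the preprint [FFS], so there is no in-paper argument to compare yours against. Judged on its own merits, your plan follows essentially the route of the cited source and is sound in its main lines: the translation of (c) into tree homogeneity, the level-peeling induction through $\pi\colon\mathbb{Z}/p^n\mathbb{Z}\to\mathbb{Z}/p^{n-1}\mathbb{Z}$, the cyclotomic dichotomy for tiles ($\Phi_{p^n}$ divides $T(x)$ or $B(x)$, combined with the fact that for a $0$--$1$ polynomial $\Phi_{p^n}\mid T(x)$ is equivalent to $H$-periodicity, by the structure of vanishing sums of $p^n$-th roots of unity), the valuation description of the zero set of $\widehat{1_T}$, and the insistence that the spectral dichotomy runs through $n\in S_T$ rather than through periodicity of $S$; your counterexample $T=S=\{0,2\}$ in $\mathbb{Z}/8\mathbb{Z}$ is correct and is exactly the trap to avoid.

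Two places need repair before this is a proof. First, as literally stated, (c) constrains only $i\le n-1$ and is then strictly weaker than homogeneity: $T=\{0,1,2\}\subset\mathbb{Z}/4\mathbb{Z}$ satisfies (c) (it has $2=2^1$ residues mod $2$) yet is neither a tile (its cardinality $3$ does not divide $4$) nor spectral (one checks $\widehat{1_T}(d)\ne 0$ for $d=1,2,3$, so no two characters are orthogonal on $T$). Hence (c)$\Rightarrow$(a),(b) is false verbatim, and what your argument proves is the corrected statement in which (c) also requires $\mathrm{Card}(T)$ to be a power of $p$ (equivalently, $i$ runs up to $n$). You noticed this (``endpoints''), but it must be recorded as an amendment to (c), not a convenience: in the direction (c)$\Rightarrow$(a),(b) it is a hypothesis, not something ``automatic''. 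Second, in both necessity directions the descent in the $H$-periodic case is asserted, not proved: if $T=\pi^{-1}(\bar T)$ has spectrum $S$, you still must produce a spectrum for $\bar T$. The missing (fillable) argument is a count: partition $S$ by residues mod $p$; within one class all differences are of the form $p\eta$, and since $\widehat{1_T}(p\eta)=p\,\widehat{1_{\bar T}}(\eta)$, each class maps under $s\mapsto(s-j)/p$ to an orthogonal family of characters in $L^2(\bar T)$, hence has at most $\mathrm{Card}(\bar T)$ elements; as the classes partition $S$ and $\mathrm{Card}(S)=\mathrm{Card}(T)=p\,\mathrm{Card}(\bar T)$, every class has exactly $\mathrm{Card}(\bar T)$ elements and is therefore a spectrum of $\bar T$. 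An analogous (easier) count is needed to see that $\pi(B)$ is a tiling complement of $\bar T$, and that $\pi|_B$ (resp.\ $\pi|_T$) is injective, in the tiling descent. Finally, a small slip in the sufficiency step: the modulation by the $p$ characters of $H$ belongs to the case $n\in A$, where $\mathrm{Card}(T)=p\,\mathrm{Card}(\bar T)$ and the tensor structure demands it; in the case $n\notin A$ the lifted spectrum $p\bar S$ already has the correct cardinality, and modulating it would overcount.
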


\begin{figure}
  \centering
  \includegraphics[width=1\textwidth]{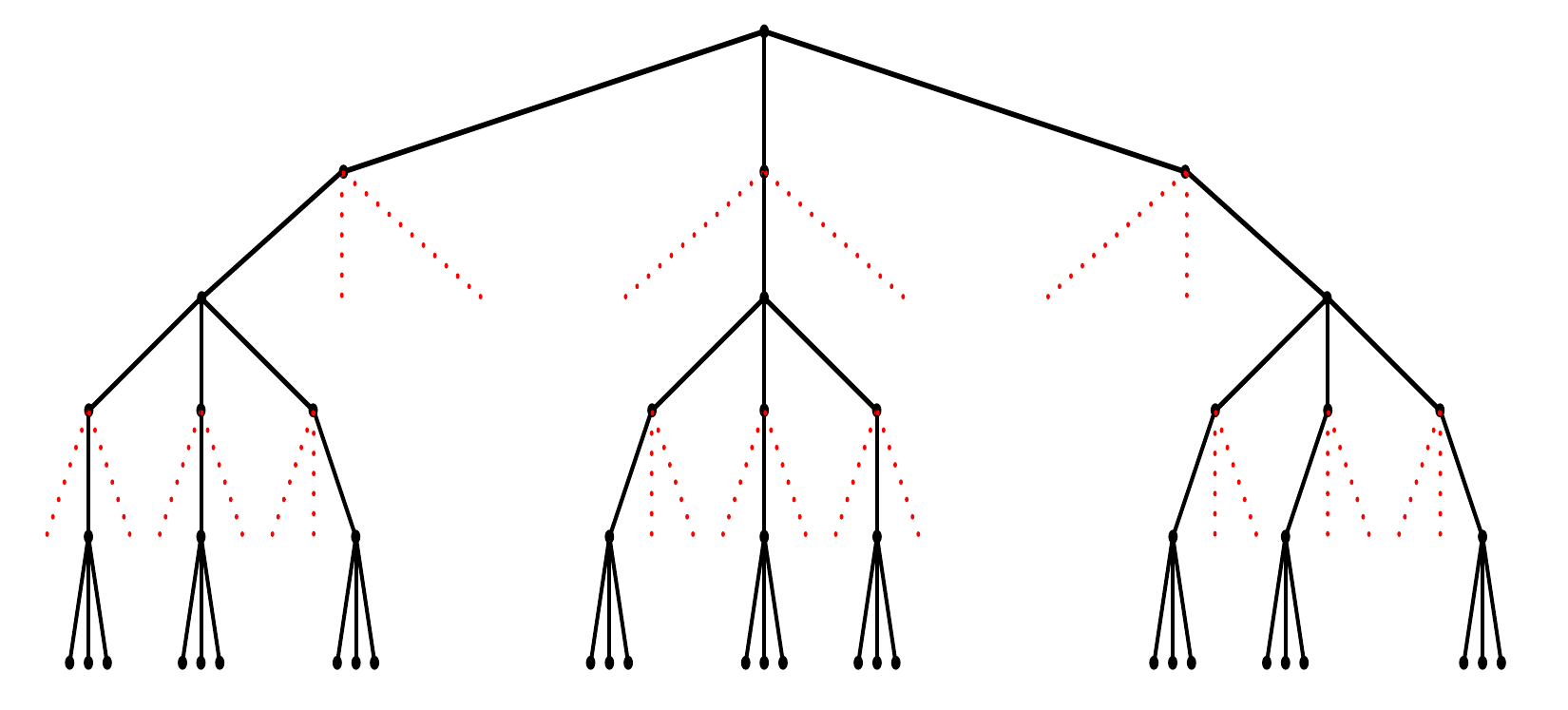}\\
   \centering \caption{A homogeneous set in $\mathbb{Z}/3^5\mathbb{Z}$. From all nodes of a given level, we choose either $3$ branches or $1$ branches. }
\label{TIJ}
\end{figure}

One example of spectral set in $\mathbb{Q}_2$ is
$$
    \Omega = \bigsqcup_{c\in C} (c + 2^3\mathbb{Z}_2)
$$
where $C=\{0, 3, 4, 7\}$. The homogeneous structure of $\{0, 3, 4, 7\}$ can also be seen from
$$
    0= 0\cdot 1 + 0\cdot 2 + 0 \cdot 2^2, \qquad 3= 1\cdot 1 + 1\cdot 2 + 0 \cdot 2^2$$
    $$
    4= 0\cdot 1 + 0\cdot 2 + 1 \cdot 2^2,\qquad 7= 1\cdot 1 + 1\cdot 2 + 1 \cdot 2^2.
$$

Some singular spectral measures are also found in the field $\mathbb{Q}_p$ in \cite{FFS}. Here is an example. Consider the iterated function system defined by
$$
     f_c(x) = 8x + c \quad (c \in C)
$$
where $C$ is the same set $\{0, 3, 4, 7\}$ as above. The invariant measure associated to the
probability $(\frac{1}{4}, \frac{1}{4}, \frac{1}{4}, \frac{1}{4})$ is a spectral measure, which is supported by a Cantor set of dimension $\frac{\log 4}{\log 8}=\frac{2}{3}$.

\medskip
{\em Acknowledgement} My thanks go to Kasing LAU, Shilei FAN, Lingmin LIAO for their
careful reading of the first version of the paper and for their remarks.

\section{Preliminaries}

\subsection{Local fields}
Recall that a local field is a non-discrete locally compact disconnected
field. If it is of characteristic zero, it is a field of p-adic numbers $\mathbb{Q}_p$ or its finite extension. If it is of positive characteristic, it is a field of p-series $\mathbb{F}_p((T))$ or its finite extension $\mathbb{F}_{p^c}((T))$.
Connected locally compact fields are $\mathbb{R}$ and $\mathbb{C}$.

Consider the field $\mathbb{Q}$ of rationals and a prime $p\ge 2$.
Any nonzero number $r\in \mathbb{Q}$ can be written as
$r =p^v \frac{a}{b}$ where $v, a, b\in \mathbb{Z}$ and $(p, a)=1$ and $(p, b)=1$
(here $(x, y)$ denotes the greatest common divisor of two integers $x$ and $y$). By unique factorization in $\mathbb{Z}$,
the number $v$ depends only on $r$. We define $v_p(r)=v$ and
$|r|_p = p^{-v_p(r)}$ for $r\not=0$ and $|0|_p=0$.
Then $|\cdot|_p$ is a non-Archimedean absolute value. That means\\
\indent (i)  \ \ $|r|_p\ge 0$ with equality only for $r=0$; \\
\indent (ii) \ $|r s|_p=|r|_p |s|_p$;\\
\indent (iii) $|r+s|_p\le \max\{ |r|_p, |s|_p\}$.\\
The field of p-adic numbers $\mathbb{Q}_p$ is the completion of $\mathbb{Q}$ under
$|\cdot|_p$. Actually
a typical element of $\mathbb{Q}_p$ is of the form
$$
     \sum_{n= N}^\infty a_n p^{n} \qquad (N\in \mathbb{Z}, a_n \in \{0,1,\cdots, p-1\}).
$$
(the partial sum from $N$ to $m$ is  a fundamenta1 sequence of elements of $\mathbb{Q}$).

Let $\mathbb{F}_q[T]$ be the ring of polynomials over the finite field $\mathbb{F}_q$ of $q=p^c$ elements.
Let $\mathbb{F}_q(T)$ be the field of rational functions of the
indeterminate $T$ with coefficients in $\mathbb{F}_q$. Any nonzero $h\in \mathbb{F}_q(T)$
can be written $
h(T) = T^\rho \frac{f(T)}{g(T)}$
where $\rho\in \mathbb{Z},  f\in \mathbb{F}_q[T]$ and $f\in \mathbb{F}_q[T]$
 with $f(0) \not=0$ and $g(0) \not= 0$.  Then we define
$|h|= q^{-\rho}$. This function $|\cdot|: \mathbb{F}_q(T) \to \mathbb{R}_+$
is also a non-Archimedean absolute value. The completion of $\mathbb{F}_q(T)$, denoted by
$\mathbb{F}_q((T))$,
is called the field of formal Laurent series over $\mathbb{F}_q(T)$. Actually
a typical element of $\mathbb{F}_q((T))$ is of the form
$$
     \sum_{n= N}^\infty a_n T^{n} \qquad (N\in \mathbb{Z}, a_n \in \mathbb{F}_q).
$$
(the partial sum from $N$ to $m$ is  a fundamental sequence of elements of $\mathbb{F}_q(T)$).
\medskip

Let us compares the two fields $\mathbb{Q}_p$
and $\mathbb{F}_q((T))$ in the following table.

\begin{tabular}{|c||c|c||c|c|}
  \hline
  K &$\mathbb{Q}_p$& $\sum_{n= N}^\infty a_n p^{n}$&$\sum_{n= N}^\infty a_n T^{n}$&$\mathbb{F}_q((T))$\\
  \hline
  Completion of & $\mathbb{Q}$ & a/b & f/g & $\mathbb{F}_q(T)$ \\
  \hline
  $\mathfrak{D}$ & $\mathbb{Z}_p$ & $\sum_{n= 0}^\infty a_n p^{n}$ & $\sum_{n= 0}^\infty a_n T^{n}$ & $\mathbb{F}_q[[T]]$ \\
  \hline
  $\mathfrak{P}$ & $(p)$ & $\sum_{n= 1}^\infty a_n p^{n}$ & $\sum_{n= 1}^\infty a_n T^{n}$ & (T) \\
  \hline
  $\mathbf{k}$ & $\mathbb{F}_p$ &  &  & $\mathbb{F}_q$ \\
  \hline
\end{tabular}
\medskip

Formally the two fields $\mathbb{Q}_p$ and $\mathbb{F}_q((T))$
seem the same. But their algebraic operations are different.
In $\mathbb{Q}_p$, we add and multiply coordinate by coordinate but with carry to the right.
In $\mathbb{F}_q((T))$, we add coordinate by coordinate and we multiply by the
rule of Cauchy as we do for polynomials.

In $\mathbb{Q}_p$, a non-trivial character is defined by
$$
    \chi(x) = e^{2\pi i \{x\}}
$$
where $\{x\}= \sum_{n=N}^{-1} a_n p^n$ is the fractional part of $x=\sum_{n=N}^{\infty} a_n p^n$. From this character we can get all characters of $\mathbb{Q}_p$.

In $\mathbb{F}_q((T))$, a non-trivial character is defined by
$$
    \chi(x) = e^{ \frac{2\pi i}{p} v(x)}
$$
where  $x=\sum_{n=N}^{\infty} a_n T^n$ and $v(x)=a_{-1}^{(1)}$
is the first coordinate of $a_{-1}$ in a fixed basis of the $\mathbb{F}_p$-vector
space $\mathbb{F}_q$. This character is trivial on $\mathbb{F}_q[[T]]$ but non-trivial on $T^{-1} \mathbb{F}_q[[T]]$.

\subsection{Notation and Basic facts}
\, \ \ \\
\noindent Notation.
\\ \indent
$\mathfrak{D}^\times := \mathfrak{D}\setminus \mathfrak{P}=\{x\in K: |x|=1\}$.
It is the group of units of $\mathfrak{D}$.

$\mathfrak{U}_n:= 1 + \mathfrak{p}^n \mathfrak{D}$ ($n\ge 1$).
These are subgroups of $\mathfrak{D}^\times$.

$B(0, q^{n}): = \mathfrak{p}^{-n} \mathfrak{D}$.  It is the (closed) ball centered at $0$ of radius $q^n$.

$B(x, q^{n}): = x + B(0, q^{n})$. We also use it to denote balls in $K^d$.

$S(x, q^{n}): = B(x, q^{n}) \setminus B(x, q^{n-1}) $, the sphere  of radius $q^n$.

$\mathcal{A}_n:$ the set of finite union of balls of radius $q^n$ ($n\in \mathbb{Z})$.

$1_A:$ the characteristic function of a set $A$.

\medskip
\noindent Facts.
\\ \indent
$\mathfrak{m}(\mathfrak{P})= q^{-1}$, $|\mathfrak{p}|=q^{-1}$,
$\mathfrak{m}(B(x, q^n)= q^n$.

All $\mathfrak{p}^n \mathfrak{D}$ are additive groups.

All $\mathfrak{U}_n$ are multiplicative groups.

$d \mathfrak{m}(ax) =|a|d\mathfrak{m}(x)$ for all $a\in K^*$. It is the image of $\mathfrak{m}$
under $x\mapsto ax$.


\subsection{Quasi-lattices} Not like in $\mathbb{R}^d$, there is no lattice groups
in $K^d$, because finitely generated additive groups in $K^d$ are bounded.
We define quasi-lattices which will play the role of lattices in $\mathbb{R}^d$.

   The unit ball $B(0, 1)$  is an additive subgroup of $K^d$. Let $\mathbb{L}\subset K^d$
   be a complete set of representatives of the cosets of $B(0, 1)$. Then
   $$
        K^d = \mathbb{L} + B(0, 1) = \bigsqcup_{\gamma\in \mathbb{L}} (\gamma + B(0, 1)).
   $$
   We call $\mathbb{L}$ a {\em standard quasi-lattice} in $K^d$.
   Recall that $\mathbb{Z}^d$ is the standard lattice in $\mathbb{R}^d$, which is
   a finite generated subgroup of $\mathbb{R}^d$. Notice that $\mathbb{L}$ is not a subgroup of $K^d$, as we shall see.

   If $\mathbb{L}$ is a standard quasi-lattice in $K$, then $\mathbb{L}^d$
   is a standard quasi-lattice of $K^d$. If $\mathbb{L}$ is a standard quasi-lattice
   of $K^d$, so is $\{\gamma + \eta_\gamma: \gamma\in \mathbb{L}\}$
   where $\{\eta_\gamma\}_{\gamma\in \mathbb{L}}$ is any set in the unit ball $B(0, 1)$.

   Let us present a standard quasi-lattice in $\mathbb{Q}_p$. For any
   $n\ge 1$, let
   $$
       V_n = \{1\le k <p^n: (k, p) =1\}.
   $$
   The set $V_n$ is nothing but the set of invertible elements of the ring $\mathbb{Z}/p^n \mathbb{Z}$, i.e. $V_n = (\mathbb{Z}/p^n\mathbb{Z})^\times$. Then
   $$
       \{0\} \sqcup  p^{-1}V_1\sqcup p^{-2}V_2 \sqcup\cdots p^{-n}V_n \sqcup \cdots
   $$
   is a standard quasi-lattice of $\mathbb{Q}_p$.

   \begin{lemma} The set of characters of $\mathbb{Z}_p$
   is $\{\chi(\gamma x)\}_{\gamma\in \mathbb{L}}$ where $\mathbb{L}$
   is a standard quasi-lattice of $\mathbb{Q}_p$, where $\chi(x) = e^{2\pi i \{x\}}$.
   \end{lemma}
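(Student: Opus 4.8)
The plan is to realize every character of $\Zp$ as the restriction of a character of $\Qp$ and then to count cosets. Recall from the introduction that $y\mapsto \chi_y$, with $\chi_y(x)=\chi(yx)$, is an isomorphism of $\Qp$ onto $\widehat{\Qp}$. First I would restrict these characters to the compact open subring $\Zp=B(0,1)$ and identify the annihilator $\Zp^{\perp}=\{y\in\Qp:\chi(yx)=1\ \text{for all}\ x\in\Zp\}$. Since $\Zp$ is a ring, $y\in\Zp$ forces $yx\in\Zp$ and hence $\chi(yx)=1$, giving $\Zp\subseteq\Zp^{\perp}$. Conversely, taking $x=1$ and using the explicit formula $\chi(y)=e^{2\pi i\{y\}}$ together with $0\le\{y\}<1$, one has $\chi(y)\neq 1$ whenever $y\notin\Zp$; thus $\Zp^{\perp}=\Zp$ exactly. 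Consequently $y\mapsto\chi_y|_{\Zp}$ descends to an \emph{injection} of $\Qp/\Zp$ into $\widehat{\Zp}$.

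The second step is surjectivity: every continuous character $\psi$ of $\Zp$ is of the form $\chi_y|_{\Zp}$. This is the only point with genuine content, and I would argue it directly rather than quote Pontryagin duality. Because the unit circle has no arbitrarily small nontrivial subgroups and $\psi$ is continuous, $\psi$ must be trivial on the open subgroup $\mathfrak{p}^{n}\mathfrak{D}=p^{n}\Zp$ for some $n\ge 1$, so $\psi$ factors through the finite quotient $\Zp/p^{n}\Zp\cong\mathbb{Z}/p^{n}\mathbb{Z}$, whose characters are the maps $j\mapsto e^{2\pi i aj/p^{n}}$ for $a=0,1,\dots,p^{n}-1$. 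A short computation shows these are exactly the restrictions $\chi_{\gamma}|_{\Zp}$ with $\gamma=ap^{-n}$: writing $x\equiv j\ (\mathrm{mod}\ p^{n})$, the term $ax'$ coming from $x=j+p^{n}x'$ lies in $\Zp$ and contributes nothing to $\{ap^{-n}x\}$, whence $\chi(\gamma x)=e^{2\pi i aj/p^{n}}$. This realizes every character of $\Zp$ and proves that restriction maps $\widehat{\Qp}$ onto $\widehat{\Zp}$.

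Finally I would translate the resulting isomorphism $\widehat{\Zp}\cong\Qp/\Zp$ into the language of quasi-lattices. By definition a standard quasi-lattice $\mathbb{L}$ is a complete set of coset representatives of $B(0,1)=\Zp$ in $\Qp$, so $\gamma\mapsto\gamma+\Zp$ is a bijection of $\mathbb{L}$ onto $\Qp/\Zp$. Combining this with the two preceding steps, $\gamma\mapsto\chi_{\gamma}|_{\Zp}$ is a bijection of $\mathbb{L}$ onto $\widehat{\Zp}$, which is precisely the assertion that $\{\chi(\gamma x)\}_{\gamma\in\mathbb{L}}$ is the full set of characters of $\Zp$. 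For the concrete lattice $\{0\}\sqcup\bigsqcup_{n\ge1}p^{-n}V_n$ one checks that $0$ together with the fractions $kp^{-n}$ with $(k,p)=1$ and $0<k<p^{n}$ are exactly the representatives of $\Qp/\Zp$ lying in $[0,1)$, matching the fractional-part normalization of $\chi$. The main obstacle is the surjectivity step; the remaining assertions are bijection-chasing bookkeeping.
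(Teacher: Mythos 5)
Your proof is correct. Note that the paper itself offers \emph{no} proof of this lemma --- it is stated as a known fact (it is standard Pontryagin duality for $\mathbb{Q}_p/\mathbb{Z}_p$, available in the cited references of Taibleson or Vladimirov--Volovich--Zelenov), so there is no argument in the paper to compare yours against; what you have done is supply the missing details, and you have done so soundly. All three steps check out: the annihilator computation $\mathbb{Z}_p^{\perp}=\mathbb{Z}_p$ (injectivity on $\mathbb{Q}_p/\mathbb{Z}_p$ follows, and evaluating at $x=1$ is enough to rule out $y\notin\mathbb{Z}_p$, since then $0<\{y\}<1$); the surjectivity via the no-small-subgroups property of the circle, which forces any continuous character to kill some $p^n\mathbb{Z}_p$ and hence factor through the finite cyclic group $\mathbb{Z}_p/p^n\mathbb{Z}_p\cong\mathbb{Z}/p^n\mathbb{Z}$, whose characters you correctly identify with the restrictions $\chi_{ap^{-n}}|_{\mathbb{Z}_p}$ (the key computational point, that $w\in\mathbb{Z}_p$ contributes nothing to $\{z+w\}$ because $p$-adic carries propagate toward higher powers of $p$, is right and is exactly the mechanism the paper alludes to when it says characters on local fields are locally constant); and the final bookkeeping identifying $\mathbb{L}$ with $\mathbb{Q}_p/\mathbb{Z}_p$, which correctly covers an \emph{arbitrary} standard quasi-lattice rather than only the concrete one built from the sets $V_n$. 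The only cosmetic blemish is the phrase ``representatives lying in $[0,1)$'', which mixes the real and $p$-adic points of view; it would be cleaner to say that the elements $kp^{-n}$, $(k,p)=1$, $0<k<p^n$, together with $0$, hit each coset of $\mathbb{Z}_p$ exactly once because two such rationals differ by an element of $\mathbb{Z}_p$ only if they are equal.
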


   As consequence, we get immediately the characters of $\mathbb{Z}_p^d$.
   In another words, $\{\chi(\gamma \cdot x)\}_{\gamma\in \mathbb{L}}$ where $\mathbb{L}$
   is a standard quasi-lattice of $\mathbb{Q}_p^d$ is the set of  characters of $\mathbb{Z}_p^d$. It is a Hilbert basis of $L^2(\mathbb{Z}_p^d)$. It is also
   a Hilbert basis of $L^2(\mathbb{Z}_p^d+ a)$, for any $a\in \mathbb{Q}_p^d$.
   In other words, $(a+\mathbb{Z}_p^d, \mathbb{L})$ is a spectral pair.

   For the group $p^{-n}\mathbb{Z}_p$ ($n\in \mathbb{Z}$), the characters are described
   by $p^{n} \mathbb{L}$. More generally, let $A\in {\rm GL}_d(\mathbb{Q}_p)$ be a non-singular $d\times d$-matrix. The characters of the group
   $A\mathbb{Z}_p^d$ are described by $(A^{-1})^t \mathbb{L}$.
   We call such a set $(A^{-1})^t \mathbb{L}$ a {\em quasi-lattice} of $\mathbb{Q}_p^d$.

   Quasi-lattices are separated.  Standard quasi-lattice $\mathbb{L}$ in
   $K^d$ admits its separation constant $d(\mathbb{L})=q$.
   Let us give a direct proof for the standard quasi-lattice in $\mathbb{Q}_p$.

   \begin{lemma} We  have $d(\mathbb{L})=p$ for the standard quasi-lattice $\mathbb{L}$ in $\mathbb{Q}_p$.
   \end{lemma}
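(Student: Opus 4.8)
The plan is to compute $|\sigma-\tau|_p$ directly for every pair of distinct points $\sigma,\tau\in\mathbb{L}$, to establish the uniform lower bound $|\sigma-\tau|_p\ge p$, and then to exhibit a single pair realizing the value $p$; together these give $d(\mathbb{L})=\inf_{\sigma\ne\tau}|\sigma-\tau|_p=p$. The starting observation is that every nonzero point of $\mathbb{L}$ sits in exactly one level $p^{-n}V_n$ ($n\ge 1$), and such a point $p^{-n}k$ with $(k,p)=1$ satisfies $|p^{-n}k|_p=p^{n}|k|_p=p^{n}$, since $(k,p)=1$ forces $v_p(k)=0$, i.e. $|k|_p=1$. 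Thus the levels are stratified by absolute value, which makes the strong triangle inequality the natural tool.

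I would then split into three cases. First, if one of the two points is $0$, say $\tau=0$ and $\sigma=p^{-n}k$, then $|\sigma-\tau|_p=p^{n}\ge p$ because $n\ge 1$. Second, if $\sigma\in p^{-n}V_n$ and $\tau\in p^{-m}V_m$ lie in different levels, say $n>m\ge 1$, then $|\sigma|_p=p^{n}\ne p^{m}=|\tau|_p$, so by the ultrametric principle $|\sigma-\tau|_p=\max(|\sigma|_p,|\tau|_p)=p^{n}\ge p^{2}>p$. Both of these cases are immediate from the absolute-value computation above.

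The one case requiring a small argument, and the crux of the lemma, is when $\sigma=p^{-n}k$ and $\tau=p^{-n}j$ lie in the \emph{same} level with $k\ne j$ in $V_n$. Here $\sigma-\tau=p^{-n}(k-j)$, where $k-j$ is a nonzero integer with ordinary absolute value $0<|k-j|<p^{n}$; hence $p^{n}$ cannot divide $k-j$, so $v_p(k-j)\le n-1$ and $|k-j|_p\ge p^{1-n}$. This yields $|\sigma-\tau|_p=p^{n}|k-j|_p\ge p^{n}\cdot p^{1-n}=p$. Finally, the value $p$ is attained by taking $\tau=0$ and $\sigma=p^{-1}\cdot 1=1/p\in p^{-1}V_1$, for which $|\sigma-\tau|_p=p$. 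Since all distances are at least $p$ and this pair achieves $p$, the infimum defining $d(\mathbb{L})$ equals $p$. No serious obstacle arises; the only point demanding care is the valuation bound $v_p(k-j)\le n-1$ in the same-level case, which rests solely on $k-j$ being a nonzero integer of ordinary size below $p^{n}$.
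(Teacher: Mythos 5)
Your proof is correct and follows essentially the same route as the paper: stratify $\mathbb{L}$ into the levels $p^{-n}V_n$, treat cross-level and same-level pairs separately (the same-level bound via $p^n \nmid (k-j)$ is exactly the paper's key step), and exhibit a pair at distance $p$. The only cosmetic differences are that you invoke the ultrametric ``isosceles'' property for cross-level pairs where the paper computes $|\lambda'-\lambda''|=p^m$ explicitly, and your witness pair $\{0,\,p^{-1}\}$ replaces the paper's pair $\{p^{-n},\,p^{-n}(p^{n-1}+1)\}$ (which, incidentally, fails for $p=2$, $n=1$, so your choice is cleaner).
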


   \begin{proof} Let $
       \mathbb{L}_0 = \{0\}$ and $\mathbb{L}_n =\{p^{-n} k: 1\le k<p^n, (k, p)=1\}
   $ ($n\ge 1$).
   It is clear that $\mathbb{L}_n \subset S(0, p^n)$ for $n\ge 1$. For
   $\lambda'\in \mathbb{L}_n$ and $\lambda''\in \mathbb{L}_m$ ($n<m$), we have
   $|\lambda'-\lambda''| =p^m$. In fact, assume $\lambda'=p^{-n}k_1$ and $\lambda''=p^{-m}k_2$.
   Then
   $$
      \lambda'-\lambda'' = p^{-m} (k_1 p^{m-n} + k_2),
      \qquad    (p, k_1 p^{m-n} + k_2)=1.
   $$
   For
   $\lambda', \lambda''\in \mathbb{L}_n$ with $\lambda'\not=\lambda''$, we have
   $|\lambda'-\lambda''| \ge p$. In fact, assume $\lambda'=p^{-n}k_1$ and $\lambda''=p^{-n}k_2$.
   Then  $\lambda'-\lambda'' = p^{-n}(k_1 -k_2)$. Since $1\le |k_1-k_2|<p^n$,  $k_1-k_2$
   is not divisible by $p^n$. On the other hand, we have
   $|\lambda'-\lambda''|=p$ for $\lambda'= p^{-n}$ and $\lambda''=p^{-n}(p^{n-1} +1)$.
   \end{proof}

\subsection{Some Fourier integrals}

The Fourier transform of the characteristic function of a ball centered at $0$
is a function of the same type.

 \begin{lemma} We have $\widehat{1_{B(0, q^a)}}(\xi)= q^a 1_{B(0, q^{-a})} (\xi)$
 for any $a\in \mathbb{Z}$.
 \end{lemma}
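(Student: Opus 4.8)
The plan is to evaluate the defining integral
$$
\widehat{1_{B(0, q^a)}}(\xi) = \int_{B(0, q^a)} \overline{\chi(\xi x)}\, d\mathfrak{m}(x)
$$
by recognizing the integrand as a character of the compact additive group $B(0, q^a) = \mathfrak{p}^{-a}\mathfrak{D}$. For fixed $\xi$ the map $x \mapsto \overline{\chi(\xi x)} = \chi(-\xi x)$ is a continuous character of this group, so by the orthogonality relation for characters of a compact group the integral equals $\mathfrak{m}(B(0, q^a)) = q^a$ when the character is trivial and vanishes otherwise. Everything therefore reduces to deciding, as a function of $\xi$, whether $x \mapsto \chi(\xi x)$ is identically $1$ on $B(0, q^a)$.

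First I would reduce this triviality question to a statement about $\chi$ on a single ball. Since $x \mapsto \xi x$ scales absolute values by the factor $|\xi|$ (a power of $q$), it carries $B(0, q^a)$ bijectively onto $\xi\, B(0, q^a) = B(0, |\xi| q^a)$ and rescales the measure by $d\mathfrak{m}(\xi x) = |\xi|\, d\mathfrak{m}(x)$; hence $x \mapsto \chi(\xi x)$ is trivial on $B(0, q^a)$ exactly when $\chi$ is trivial on $B(0, |\xi| q^a)$. The degenerate case $\xi = 0$ is immediate, and $0 \in B(0, q^{-a})$, so it is consistent with the claim.

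Next I would pin down the conductor of $\chi$ using only its two defining properties: $\chi \equiv 1$ on $\mathfrak{D} = B(0, 1)$, while $\chi$ is non-constant on $\mathfrak{p}^{-1}\mathfrak{D} = B(0, q)$. Because the balls centered at $0$ are totally ordered by inclusion, for $b \le 0$ we have $B(0, q^b) \subseteq \mathfrak{D}$, so $\chi$ is trivial there; while for $b \ge 1$ we have $B(0, q^b) \supseteq \mathfrak{p}^{-1}\mathfrak{D}$, on which $\chi$ is already non-constant, so $\chi$ is non-trivial on $B(0, q^b)$. Thus $\chi$ is trivial on $B(0, q^b)$ if and only if $b \le 0$. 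Writing $|\xi| q^a = q^b$ (a power of $q$ when $\xi \neq 0$), triviality of $x \mapsto \chi(\xi x)$ on $B(0, q^a)$ holds precisely when $|\xi| q^a \le 1$, i.e. $|\xi| \le q^{-a}$, i.e. $\xi \in B(0, q^{-a})$.

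Combining the two steps gives $\widehat{1_{B(0, q^a)}}(\xi) = q^a$ for $\xi \in B(0, q^{-a})$ and $0$ otherwise, which is exactly $q^a 1_{B(0, q^{-a})}(\xi)$. The only genuinely substantive point is the conductor computation; the remainder is the orthogonality relation for characters of a compact group and the scaling property of $\mathfrak{m}$, both recorded among the basic facts. I expect no real obstacle beyond being careful that ``trivial iff $b \le 0$'' uses the nesting of balls together with the hypothesis that $\chi$ is non-constant—not merely unequal to $1$ at one point—already on $\mathfrak{p}^{-1}\mathfrak{D}$.
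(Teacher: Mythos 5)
Your proof is correct and is essentially the paper's argument: the orthogonality relation for characters of a compact group that you invoke is exactly the paper's translation trick ($x = y - z$ with $\chi(\xi\cdot z)\neq 1$), and your conductor computation makes explicit the fact the paper uses implicitly when choosing such a $z$. The only cosmetic difference is that you handle general $a$ directly by rescaling the triviality question, whereas the paper first reduces to $a=0$ by a change of variables in the integral.
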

 \begin{proof} By the scaling property of the Haar measure, we have to prove the result
 in the case $a=0$. Recall that
 $$
    \widehat{1_{B(0, 1)}}(\xi)
    = \int_{B(0, 1)} \chi(-\xi \cdot x) dx.
 $$
 When $|\xi|\le 1$, the integrand is equal to $1$, so $\widehat{1_{B(0, 1)}}(\xi)=1$.
 When $|\xi|>1$, making a change of variable $x = y-z$ with $z\in B(0, 1)$ chosen
 such that $ \chi(\xi\cdot z)\not=1$,
 we get $$\widehat{1_{B(0, 1)}}(\xi) = \chi(\xi\cdot z) \widehat{1_{B(0, 1)}}(\xi).$$
 It follows that $\widehat{1_{B(0, 1)}}(\xi)=0$ for $|\xi|>1$.
 \end{proof}

  \begin{lemma} Let $O = \bigsqcup_j B(\tau_j, q^a)\in \mathcal{A}_a$ be a finite union of ball of the same size,
 where $a\in \mathbb{Z}$. We have
 $$\widehat{1_O}(\xi) = q^a 1_{B(0, q^{-a})} (\xi) \sum_j \chi(- \xi\cdot \tau_j).$$
 In particular, $\widehat{1_O}(\xi)$ is supported by the ball $B(0, q^{-a})$.
 \end{lemma}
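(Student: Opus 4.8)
The plan is to reduce everything to the preceding lemma via the linearity of the Fourier transform together with a translation rule. Since the union defining $O$ is disjoint, I would first write $1_O = \sum_j 1_{B(\tau_j, q^a)}$, so that by linearity $\widehat{1_O}(\xi) = \sum_j \widehat{1_{B(\tau_j, q^a)}}(\xi)$; it then suffices to treat a single translated ball and sum.

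Next I would record the translation rule for the Fourier transform on $K^d$. Writing $B(\tau_j, q^a) = \tau_j + B(0, q^a)$, we have $1_{B(\tau_j, q^a)}(x) = 1_{B(0, q^a)}(x - \tau_j)$. Substituting $y = x - \tau_j$ in the defining integral $\widehat{f}(\xi) = \int \chi(-\xi\cdot x) f(x)\,dx$, and using that $\chi$ is a character (hence $\chi(-\xi\cdot(y+\tau_j)) = \chi(-\xi\cdot\tau_j)\,\chi(-\xi\cdot y)$) together with the translation invariance of the Haar measure, one obtains
$$
\widehat{1_{B(\tau_j, q^a)}}(\xi) = \chi(-\xi\cdot\tau_j)\,\widehat{1_{B(0, q^a)}}(\xi).
$$

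Finally I would invoke the previous lemma, which gives $\widehat{1_{B(0, q^a)}}(\xi) = q^a 1_{B(0, q^{-a})}(\xi)$, and substitute it into the sum. Factoring out the common term $q^a 1_{B(0, q^{-a})}(\xi)$ yields precisely the claimed identity. The ``in particular'' assertion is then immediate: every summand carries the factor $1_{B(0, q^{-a})}(\xi)$, which vanishes for $|\xi| > q^{-a}$, so $\widehat{1_O}$ is supported in $B(0, q^{-a})$.

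There is no genuine obstacle here; the only point demanding a little care is the bookkeeping of the sign and the complex conjugate in the definition $\widehat{f}(\xi) = \int \chi(-\xi\cdot x) f(x)\,dx$, so that the phase factors emerge as $\chi(-\xi\cdot\tau_j)$ rather than their conjugates. Everything else is a direct consequence of linearity and the single-ball computation already established.
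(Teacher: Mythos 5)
Your proof is correct and follows exactly the route the paper intends: the paper's own proof is the one-line remark that the statement is ``a direct consequence of the last lemma,'' and your argument (linearity over the disjoint balls, the translation rule producing the phase factor $\chi(-\xi\cdot\tau_j)$, then substituting $\widehat{1_{B(0,q^a)}}(\xi)=q^a 1_{B(0,q^{-a})}(\xi)$) is precisely that consequence spelled out. No further comment needed.
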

 \begin{proof} It is a direct consequence of the last lemma.
 \end{proof}

  \begin{lemma}\label{D-int} For $a, b\in \mathbb{Z}$, we have
  $$
      \int_{|\xi|\le q^a} \int_{|\eta|\le q^a}
      |\widehat{1_{B(0, q^b)}}(\xi-\eta)|^2d\xi d\eta =
      \left\{
      \begin{array}{lcl}
        q^{a+b} & \mbox{\rm if} &  a+b \ge 0 \\
        q^{2(a+b)} & \mbox{\rm if} & a+b <0.
      \end{array}
      \right.
  $$
  \end{lemma}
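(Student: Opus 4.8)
The plan is to turn this double integral into a pure Haar-measure computation and then let the ultrametric inequality do all the work. First I would invoke the preceding lemma on the Fourier transform of the indicator of a ball, which gives $\widehat{1_{B(0,q^b)}}(\zeta) = q^b\,1_{B(0,q^{-b})}(\zeta)$, so that
$$
   |\widehat{1_{B(0,q^b)}}(\xi-\eta)|^2 = q^{2b}\,1_{\{|\xi-\eta|\le q^{-b}\}}.
$$
The integral therefore equals $q^{2b}$ times the measure of the set $E = \{(\xi,\eta)\in K\times K : |\xi|\le q^a,\ |\eta|\le q^a,\ |\xi-\eta|\le q^{-b}\}$, and the whole task reduces to evaluating $\mathfrak{m}(E)$.

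To compute $\mathfrak{m}(E)$ I would fix $\eta$ with $|\eta|\le q^a$ and integrate over the admissible $\xi$, using the facts that every ball is a translate of some $B(0,q^n)$ and that $\mathfrak{m}(B(x,q^n))=q^n$. The key observation is that exactly one of the two constraints on $\xi$ is always redundant, and which one drops out is governed by the sign of $a+b$. If $a+b\ge 0$, then $q^{-b}\le q^a$, and for such $\eta$ every $\xi$ with $|\xi-\eta|\le q^{-b}$ automatically satisfies $|\xi|\le\max(|\xi-\eta|,|\eta|)\le q^a$; thus the admissible $\xi$ fill exactly $B(\eta,q^{-b})$, of measure $q^{-b}$, and integrating over $\eta\in B(0,q^a)$ gives $\mathfrak{m}(E)=q^{a}\cdot q^{-b}=q^{a-b}$, so the integral is $q^{2b}\cdot q^{a-b}=q^{a+b}$. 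If $a+b<0$, then $q^{-b}>q^a$, and now every $\xi$ with $|\xi|\le q^a$ automatically satisfies $|\xi-\eta|\le\max(|\xi|,|\eta|)\le q^a<q^{-b}$; the admissible $\xi$ fill all of $B(0,q^a)$, giving $\mathfrak{m}(E)=q^{2a}$ and an integral of $q^{2b}\cdot q^{2a}=q^{2(a+b)}$.

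I do not expect a genuine obstacle here: the entire argument hinges on the ultrametric inequality $|\xi|\le\max(|\xi-\eta|,|\eta|)$, which renders precisely one of the two ball conditions vacuous in each regime, together with the scaling $\mathfrak{m}(B(x,q^n))=q^n$. The only point requiring care is keeping track of which condition is redundant and checking that the boundary value $a+b=0$ falls correctly into the first branch; it does, since there $q^{-b}=q^a$ and both formulas return the common value $q^{a+b}=1$.
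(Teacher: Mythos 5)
Your proof is correct, and after the shared first step (invoking the preceding lemma to replace $|\widehat{1_{B(0,q^b)}}(\xi-\eta)|^2$ by $q^{2b}\,1_{\{|\xi-\eta|\le q^{-b}\}}$) it follows a genuinely different route from the paper's. The paper rescales via $\xi=\mathfrak{p}^b u$, $\eta=\mathfrak{p}^b v$ to reduce everything to the unit-ball kernel $1_{B(0,1)}(u-v)$ on $B(0,q^{a+b})\times B(0,q^{a+b})$, and in the main case $a+b\ge 0$ it decomposes $B(0,q^{a+b})$ into the $q^{a+b}$ unit balls centered at the quasi-lattice points $\mathbb{L}_{a+b}=\mathbb{L}\cap B(0,q^{a+b})$: off-diagonal pairs of balls contribute $0$ because distinct balls are separated by distance greater than $1$, each diagonal pair contributes $1$, and the answer is the count of $\mathbb{L}_{a+b}$. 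You instead keep the original variables and apply Fubini: for each fixed $\eta$ the ultrametric inequality makes exactly one of the two constraints on $\xi$ redundant, so the inner integral is the measure of a single ball ($q^{-b}$ when $a+b\ge 0$, $q^a$ when $a+b<0$), and multiplying by $\mathfrak{m}(B(0,q^a))=q^a$ finishes. Your argument is shorter and bypasses the quasi-lattice machinery entirely; what the paper's version buys is the explicit diagonal block structure of the kernel over translates of the unit ball, which is exactly the decomposition reused in Lemma \ref{Eigen-ball} to identify the eigenvectors $1_{B(c,q^{-b})}$ and the multiplicity $q^{a+b}$ of the eigenvalue $1$ of the Landau operator. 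In the regime $a+b<0$ the two proofs coincide in substance (one ball is contained in the other, so the kernel is identically $1$ on the whole domain), and your remark that the boundary case $a+b=0$ falls consistently into the first branch is accurate.
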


  \begin{proof} Recall that $\widehat{1_{B(0, q^b)}}(\xi)= q^b 1_{B(0, q^{-b})} (\xi)$.
  Using this and making the change of variables $\xi = \mathfrak{p}^b u, \eta = \mathfrak{p}^b v$ (the jacobian is equal to $q^{-2b}$). The integral
  becomes
  $$
        \int_{|u|\le q^{a+b}} \int_{|v|\le q^{a+b}}
      |{1_{B(0, 1)}}(u-v)|^2du dv.
  $$

  Assume $a+b<0$. Then the ball $B(0, q^{a+b})$ is contained in the unit ball $B(0, 1)$
  so that the integrand is equal to $1$ on $B(0, q^{a+b}) \times B(0, q^{a+b})$.
  The integral is then equal to $q^{2(a+b)}$, the Haar measure of $B(0, q^{a+b}) \times B(0, q^{a+b})$.

  Assume now $a+b\ge 0$. The ball $B(0, q^{a+b})$ is the disjoint union of the balls $B(c, 1)$ with center
  in $\mathbb{L}_{a+b}: = \mathbb{L}\cap B(0, q^{a+b})$, where $\mathbb{L}$ is a standard quasi-lattice of $K^d$. So, the above double integral is equal to
  $$
     \sum_{c'\in \mathbb{L}_{a+b}}  \sum_{c''\in \mathbb{L}_{a+b}}
         \int_{B(c',1)} \int_{B(c'', 1)}
      |{1_{B(0, 1)}}(u-v)|^2du dv.
  $$
  If $c'\not= c''$, the balls $B(c',1)$ and $B(c'',1)$ have a distance strictly larger than $1$,
  then the corresponding integral is equal to zero. The integral is equal to $1$
  if $c'=c''$. Thus the above sum is equal to the
  cardinality of $\mathbb{L}_{a+b}$, that is $q^{a+b}$.
  \end{proof}

\section{Criterion of spectral measure}
\subsection{Proof of Theorem \ref{Thm-SpectralMeasure}}
  Recall that $\langle f, g\rangle_\mu$ denotes the inner product in $L^2(\mu)$:
  $$
     \langle f, g\rangle_\mu = \int f \overline{g} d \mu, \quad \forall f, g \in L^2(\mu).
  $$
 Remark that
  $$
      \langle \chi_\xi, \chi_\lambda\rangle_\mu = \int \chi_\xi \overline{\chi}_\lambda d\mu = \widehat{\mu}(\lambda -\xi).
  $$
  It follows that $\chi_{\lambda'}$ and $\chi_{\lambda''}$
  are orthogonal in $L^2(\mu)$ iff $\widehat{\mu}(\lambda -\xi)=0$.

  Assume that $(\mu, \Lambda)$ is a spectral pair. Then  (\ref{spectral criterion}) holds because of the Parseval
  equality and of the fact that $\{\widehat{\mu}(\lambda -\xi)\}_{\lambda\in \Lambda}$
  are Fourier coefficients of $\chi_\xi$ under the Hilbert basis $\{\chi_\lambda\}_{\lambda\in \Lambda}$.

  Now assume (\ref{spectral criterion}) holds. Fix any $\lambda'\in \Lambda$ and
   take $\xi=\lambda'$ in (\ref{spectral criterion}). We get
   $$
       1 + \sum_{\lambda\in \Lambda, \lambda\not=\lambda'}|\widehat{\mu}(\lambda -\lambda')|^2=1,
       $$
   which implies  $\widehat{\mu}(\lambda -\lambda')=0$ for all $\lambda \in \Lambda\setminus \{\lambda'\}$. Thus we have proved the orthogonality of
   $\{\chi_\lambda\}_{\lambda \in \Lambda}$.
   It remains to prove that
  $\{\chi_\lambda\}_{\lambda\in \Lambda}$ is total. By the Hahn-Banach Theorem, what we have to prove is the implication
  $$
      f\in L^2(\mu), \forall \lambda \in  \Lambda, \langle f, \chi_\lambda\rangle_\mu=0
      \Rightarrow f=0.
  $$
  The condition (\ref{spectral criterion}) implies that
  $$
     \forall \xi\in \widehat{K}^d, \quad \chi_\xi = \sum_{\lambda\in \Lambda}
      \langle \chi_\xi, \chi_\lambda\rangle_\mu \chi_\lambda.
  $$
  This implies that $\chi_\xi$ is in the closure of the space spanned by
  $\{\chi_\lambda\}_{\lambda\in \Lambda}$. As $f$ is orthogonal to $\chi_\lambda$
  for all $\lambda \in \Lambda$. So, $f$ is orthogonal to
  $\chi_\xi$. Thus we have proved that
  $$
     \forall \xi \in \widehat{K}^d, \quad \int \overline{\chi}_\xi f d \mu = \langle f, \chi_\xi\rangle_\mu = 0.
  $$
  That is, the Fourier coefficients of the measure $f d\mu$ are all zero. Finally
  $f=0$ $\mu$-almost everywhere.

\subsection{Spectral sets}
 A spectral set $\Omega$ corresponds to a spectral measure $\frac{1}{\mathfrak{m}(\Omega)}\mathfrak{m}|_\Omega$, where we assume
  $0<\mathfrak{m}(\Omega)<\infty$.
  The Fourier transform of this measure is equal to $\frac{1}{\mathfrak{m}(\Omega)} \widehat{1_\Omega}(y)$. Thus, the following is a corollary of Theorem \ref{Thm-SpectralMeasure}.

 \begin{proposition} Suppose $0<\mathfrak{m}(\Omega)<\infty$. Then $\Omega$
   is a spectral set with $\Lambda$ as spectrum iff $$
   \forall \xi \in \widehat{K}^d, \quad \sum_{\lambda \in \Lambda}
 |\widehat{1_\Omega} (\lambda -\xi)|^2 = \mathfrak{m}(\Omega)^2.
   $$
\end{proposition}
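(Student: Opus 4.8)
The plan is to reduce the statement directly to Theorem \ref{Thm-SpectralMeasure} by unwinding the definition of a spectral set. First I would recall that, by definition, $\Omega$ is a spectral set with spectrum $\Lambda$ precisely when the normalized restriction $\mu := \frac{1}{\mathfrak{m}(\Omega)}\mathfrak{m}|_\Omega$ is a spectral measure with spectrum $\Lambda$. The hypothesis $0<\mathfrak{m}(\Omega)<\infty$ is exactly what guarantees that $\mu$ is a well-defined Borel probability measure, so that Theorem \ref{Thm-SpectralMeasure} applies to it.

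Next I would compute the Fourier transform of $\mu$ in terms of that of the indicator $1_\Omega$. Directly from the definition of $\widehat{\mu}$,
$$
    \widehat{\mu}(y) = \int_{K^d} \overline{\chi}_y(x)\, d\mu(x)
    = \frac{1}{\mathfrak{m}(\Omega)} \int_\Omega \overline{\chi}_y(x)\, dx
    = \frac{1}{\mathfrak{m}(\Omega)} \widehat{1_\Omega}(y),
$$
whence $|\widehat{\mu}(y)|^2 = \mathfrak{m}(\Omega)^{-2}\,|\widehat{1_\Omega}(y)|^2$ for every $y \in \widehat{K}^d$.

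The final step is to feed this into the spectral criterion. By Theorem \ref{Thm-SpectralMeasure}, $\mu$ is a spectral measure with spectrum $\Lambda$ if and only if $\sum_{\lambda \in \Lambda} |\widehat{\mu}(\lambda-\xi)|^2 = 1$ for all $\xi \in \widehat{K}^d$. Substituting the identity $|\widehat{\mu}(\lambda-\xi)|^2 = \mathfrak{m}(\Omega)^{-2}|\widehat{1_\Omega}(\lambda-\xi)|^2$ and clearing the constant factor $\mathfrak{m}(\Omega)^{-2}$ turns this condition into $\sum_{\lambda \in \Lambda} |\widehat{1_\Omega}(\lambda-\xi)|^2 = \mathfrak{m}(\Omega)^2$ for all $\xi$, which is exactly the asserted equivalence.

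There is essentially no obstacle here: the whole content is the already-established Theorem \ref{Thm-SpectralMeasure} together with the elementary scaling of the Fourier transform under normalization. The only point needing the slightest care is tracking the \emph{square} of the normalizing constant $\mathfrak{m}(\Omega)$, which is what moves the $1$ on the right-hand side of the spectral criterion to $\mathfrak{m}(\Omega)^2$ in the statement.
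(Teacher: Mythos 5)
Your proposal is correct and follows exactly the paper's own route: the paper likewise notes that the normalized restriction $\frac{1}{\mathfrak{m}(\Omega)}\mathfrak{m}|_\Omega$ has Fourier transform $\frac{1}{\mathfrak{m}(\Omega)}\widehat{1_\Omega}(y)$ and then deduces the proposition as an immediate corollary of Theorem \ref{Thm-SpectralMeasure}. Your write-up simply makes explicit the substitution and the factor $\mathfrak{m}(\Omega)^{-2}$ that the paper leaves implicit.
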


\subsection{Finite measures and Hadamard matrices}
A $n\times n$ matrix $H=(h_{i, j})$ is a (complex) {\em Hadamard matrix} if $h_{i,j}\in \mathbb{C}$
with $|h_{i, j}|=1$ such that
$$
    H \ \bar{H}^t = n I
$$
where $I$ is the unit matrix, i.e. $\frac{1}{\sqrt{n}} H$ is unitary.

Let us consider a finite set $S\subset K^d$ of $n$ points and the uniform
probability measure on $S$:
$$
   \mu_S = \frac{1}{n} \sum_{s\in S}\delta_s.
$$
The space $L^2(\mu_S)$ is then of dimension $n$. Let
$\Lambda \subset \widehat{K}^d$ be a set of $n$ points.

\begin{proposition} The pair $(\mu_S, \Lambda)$ is a spectral pair iff $(\chi(\lambda \cdot s))_{\lambda\in \Lambda, s\in S}$ is a Hadamard matrix.
\end{proposition}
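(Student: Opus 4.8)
The plan is to identify $L^2(\mu_S)$ with $\mathbb{C}^n$ via evaluation at the points of $S$, and then to recognize that the rows of the matrix $M := (\chi(\lambda\cdot s))_{\lambda\in\Lambda,\, s\in S}$ are precisely the coordinate vectors of the exponentials $\chi_\lambda$. The equivalence will then reduce to the statement that orthonormality of the family $\{\chi_\lambda\}_{\lambda\in\Lambda}$ in $L^2(\mu_S)$ is the same as the Hadamard relation $M\bar M^t = nI$.

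First I would record that, since $\mu_S$ is a uniform mass on the $n$ points of $S$, a function $f\in L^2(\mu_S)$ is determined by the vector $(f(s))_{s\in S}\in\mathbb{C}^n$, and under this identification the inner product reads $\langle f, g\rangle_{\mu_S} = \frac{1}{n}\sum_{s\in S} f(s)\overline{g(s)}$. In particular $\chi_\lambda$ corresponds to the $\lambda$-th row of $M$. I would then compute
$$
   \langle \chi_\lambda, \chi_{\lambda'}\rangle_{\mu_S}
   = \frac{1}{n}\sum_{s\in S}\chi(\lambda\cdot s)\,\overline{\chi(\lambda'\cdot s)}
   = \frac{1}{n}\,(M\bar M^t)_{\lambda,\lambda'},
$$
so that the Gram matrix of the family $\{\chi_\lambda\}_{\lambda\in\Lambda}$ is exactly $\frac{1}{n}M\bar M^t$.

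Next I would invoke a dimension count: $L^2(\mu_S)$ has dimension $n$ and $\Lambda$ has exactly $n$ elements, hence $\{\chi_\lambda\}_{\lambda\in\Lambda}$ is a Hilbert basis of $L^2(\mu_S)$ if and only if it is an orthonormal system, i.e. if and only if its Gram matrix is the identity. By the previous step this is equivalent to $M\bar M^t = nI$. Finally, since $\chi$ is a character it takes values on the unit circle, so each entry $\chi(\lambda\cdot s)$ of $M$ automatically has modulus $1$; thus the condition $M\bar M^t = nI$ is precisely the assertion that $M$ is a Hadamard matrix. Chaining these equivalences yields the claim.

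There is no genuine obstacle here, as the argument is essentially a dictionary translation between orthonormality and matrix unitarity. The only point that deserves a word of care is the passage from \emph{Hilbert basis} to \emph{orthonormal system}, which rests on the equality $\dim L^2(\mu_S) = |\Lambda| = n$: an orthonormal system of $n$ vectors in an $n$-dimensional Hilbert space is automatically complete. One should also note, for the forward direction, that the modulus-one condition on the entries comes for free from $\chi$ being a character and need not be imposed as a separate hypothesis.
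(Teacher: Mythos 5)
Your proof is correct and follows essentially the same route as the paper: the paper's entire argument is the computation $\langle \chi_{\lambda'}, \chi_{\lambda''}\rangle_{\mu_S} = \frac{1}{n}\sum_{s\in S}\chi(\lambda'\cdot s)\overline{\chi(\lambda''\cdot s)}$, identifying the Gram matrix of the exponentials with $\frac{1}{n}M\bar M^t$, exactly as you do. You merely make explicit the two points the paper leaves tacit --- the dimension count showing that an orthonormal $n$-element family in the $n$-dimensional space $L^2(\mu_S)$ is automatically a Hilbert basis, and the fact that the unimodularity of the entries is free since $\chi$ is a character.
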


It is just because of the fact that for $\lambda', \lambda''\in \Lambda$, we have
$$
   \langle \chi_{\lambda'}, \chi_{\lambda''}\rangle_\mu = \frac{1}{n} \sum_{s\in S}
      \chi(\lambda'\cdot s) \overline{\chi(\lambda''\cdot s)}.
$$

  \section{Perturbation of Bessel sequences}


  In this section, we prove Theorem \ref{Thm-Bessel}. The proof  is based on the following three lemmas.

  The first lemma tells us that we can assume that the measure $\mu$ is supported by
  the unit ball $\mathfrak{D}^d$ withou loss of generality.
  Let $\Omega = \mbox{\rm supp} \mu$. Let $a\in K^*$. Consider
  the map $\tau : \Omega \to a\Omega$ defined by $\tau(x)= ax$. Let
  $\tau_*\mu$ be the image of $\mu$ under $\tau$ (pushed forward)and let
  $\tau^* g$ be the image of a function $g$ under $\tau$ (pulled back). Recall that
  $\tau^* g(x) = g(ax)$ where $g$ is defined on $a\Omega$. Then
  $$
      \int_{a\Omega} g d\tau_*\mu = \int_\Omega \tau^* g d\mu.
  $$
  In particular, $\tau^* : L^2(\tau_*\mu)\to L^2(\mu)$ is an isometry.
  Observe that if $a$ is small enough, we have $a\Omega \subset \mathfrak{D}^d$.
  The first lemma actually establish a relationship between the Bessel sequences of $\mu$
   and those of $\tau_* \mu$. Similar statement holds for spectrum, $F$-spectrum, $R$-spectrum
   etc.

   \begin{lemma}\label{Bessel-scaled} If $\{\lambda_n\}$ is a Bessel sequence of $L^2(\mu)$,
   then $\{a^{-1}\lambda_n\}$ is a Bessel sequence of $L^2(\tau_*\mu)$ with the same constant.
   \end{lemma}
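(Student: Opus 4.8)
The plan is to transfer the Bessel inequality from $L^2(\mu)$ to $L^2(\tau_*\mu)$ through the isometry $\tau^*$, exploiting the fact that rescaling the frequency $\lambda_n$ by $a^{-1}$ exactly cancels the rescaling of the space by $a$. The central object is an intertwining identity relating the inner products on the two spaces, after which everything is formal.

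First I would establish, for every $g \in L^2(\tau_*\mu)$, the relation
$$\langle g, \chi_{a^{-1}\lambda_n}\rangle_{\tau_*\mu} = \langle \tau^* g, \chi_{\lambda_n}\rangle_\mu.$$
To see this, expand the left-hand side as $\int_{a\Omega} g(x)\, \overline{\chi(a^{-1}\lambda_n \cdot x)}\, d\tau_*\mu(x)$ and apply the change-of-variables formula $\int_{a\Omega} h\, d\tau_*\mu = \int_\Omega \tau^* h\, d\mu$ to the integrand $h = g\cdot\overline{\chi_{a^{-1}\lambda_n}}$. The pullback sends the variable $x$ to $ay$, and the only point requiring attention is the character computation $\chi_{a^{-1}\lambda_n}(ay) = \chi(a^{-1}\lambda_n \cdot ay) = \chi(\lambda_n \cdot y) = \chi_{\lambda_n}(y)$, where the scalars $a^{-1}$ and $a$ cancel inside the bilinear pairing. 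This turns the integral into $\int_\Omega (\tau^* g)(y)\, \overline{\chi_{\lambda_n}(y)}\, d\mu(y)$, which is exactly the right-hand side.

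With the identity in hand the conclusion is immediate. Summing over $n$ and using that $\{\lambda_n\}$ is a Bessel sequence of $L^2(\mu)$ with bound $B$,
$$\sum_n |\langle g, \chi_{a^{-1}\lambda_n}\rangle_{\tau_*\mu}|^2 = \sum_n |\langle \tau^* g, \chi_{\lambda_n}\rangle_\mu|^2 \le B\, \|\tau^* g\|_\mu^2.$$
Since $\tau^*$ is an isometry from $L^2(\tau_*\mu)$ to $L^2(\mu)$, we have $\|\tau^* g\|_\mu = \|g\|_{\tau_*\mu}$, so the right-hand side equals $B\,\|g\|_{\tau_*\mu}^2$. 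As $g$ was arbitrary, $\{a^{-1}\lambda_n\}$ is a Bessel sequence of $L^2(\tau_*\mu)$ with the same constant $B$.

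There is no genuine obstacle here: the argument is purely formal once the character identity is recorded, and the preservation of the constant is precisely what the exact cancellation $a^{-1}\cdot a = 1$ inside the pairing guarantees, with no distortion entering. The only thing requiring a moment of care is keeping track of the scalar $a$ against the vector arguments in the $d$-dimensional pairing $\lambda_n \cdot x$, but the relation $a^{-1}\lambda_n \cdot ay = \lambda_n \cdot y$ follows at once from bilinearity and makes this routine.
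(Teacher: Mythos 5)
Your proof is correct and follows essentially the same route as the paper: both establish the intertwining identity $\langle g, \chi_{a^{-1}\lambda_n}\rangle_{\tau_*\mu} = \langle \tau^* g, \chi_{\lambda_n}\rangle_\mu$ via the change of variables under $\tau$, then sum and invoke the Bessel bound together with the isometry $\|\tau^* g\|_\mu = \|g\|_{\tau_*\mu}$. The only difference is cosmetic: you spell out the character cancellation $\chi(a^{-1}\lambda_n \cdot ay) = \chi(\lambda_n \cdot y)$, which the paper leaves implicit in its integral computation.
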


   \begin{proof} The hypothesis means that for some constant $B >0$ we have
   $$
    \forall f \in L^2(\mu),
    \quad \sum |\langle f, \chi(\lambda_n \cdot)\rangle_\mu|^2
    \le B \|f\|^2_\mu
   $$
   Let $g \in L^2(\tau_*\mu)$. We have
\begin{eqnarray*}
   \langle g, \chi(a^{-1}\lambda_n \cdot)\rangle_{\tau_*\mu}
    &=& \int g(y) \overline{\chi(a^{-1}\lambda_n y)} d\tau_*\mu(y)\\
    &=&  \int \tau^*g(x) \overline{\chi(\lambda_n x)} d\mu(x)=
    \langle \tau^*g, \chi(\lambda_n \cdot)\rangle_{\mu}.
\end{eqnarray*}
Thus
$$
 \sum |\langle g, \chi(a^{-1}\lambda_n \cdot)\rangle_{\tau_*\mu}|^2
 =\sum |\langle \tau^*g, \chi(\lambda_n \cdot)\rangle_{\mu}|^2
 \le B \|\tau^* g\|^2 = B\|g\|^2.
$$
   \end{proof}

Thus we can consider $\tau_*\mu$ with a small $a$ such that $a\Omega \subset \mathfrak{D}$.
Then
$$
    |a^{-1}\lambda_n -a^{-1}\gamma_n|\le |a^{-1}| C.
$$

The second lemma is specific for local fields. It says that "very" small perturbation
has no effect.

   \begin{lemma}\label{Bessel-perturbed} Suppose $\mu$ has its support in $\mathfrak{D}^d$.
   If $\lambda_n$ is a Bessel sequence of $L^2(\mu)$, so is
   $\{\lambda_n + \eta_n\}$ for any sequence $\{\eta_n\}$
   such that $|\eta_n|\le 1$.
   \end{lemma}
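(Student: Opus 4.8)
The plan is to exploit the fact, stressed just above the statement, that the fixed character $\chi$ is identically $1$ on the ring of integers $\mathfrak{D}$. Since $\mu$ is supported in the unit ball $\mathfrak{D}^d$ and each perturbation satisfies $|\eta_n|\le 1$, I expect the perturbed exponential $\chi((\lambda_n+\eta_n)\cdot)$ to coincide with the original exponential $\chi(\lambda_n\cdot)$ as an element of $L^2(\mu)$. This would make the Bessel sum for $\{\lambda_n+\eta_n\}$ literally equal to the one for $\{\lambda_n\}$, so the conclusion, with the very same constant $B$, would be immediate.

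The key step is therefore to show that $\chi(\eta_n\cdot x)=1$ for $\mu$-almost every $x$. First I would note that $|\eta_n|\le 1$ means $\eta_n\in\mathfrak{D}^d$, while $x$ ranges over $\mathrm{supp}\,\mu\subset\mathfrak{D}^d$. Then, using the non-Archimedean (ultrametric) inequality together with the multiplicativity of $|\cdot|$, I would bound
\begin{equation*}
|\eta_n\cdot x|=\Big|\sum_{j=1}^d \eta_{n,j}x_j\Big|\le \max_{1\le j\le d}|\eta_{n,j}|\,|x_j|\le 1,
\end{equation*}
so that $\eta_n\cdot x\in\mathfrak{D}$. Because $\chi\equiv 1$ on $\mathfrak{D}$, this yields $\chi(\eta_n\cdot x)=1$, whence
\begin{equation*}
\chi((\lambda_n+\eta_n)\cdot x)=\chi(\lambda_n\cdot x)\,\chi(\eta_n\cdot x)=\chi(\lambda_n\cdot x)
\end{equation*}
for every $x\in\mathrm{supp}\,\mu$.

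Finally I would transfer this pointwise identity into the inner products: for every $f\in L^2(\mu)$,
\begin{equation*}
\langle f,\chi((\lambda_n+\eta_n)\cdot)\rangle_\mu=\int f\,\overline{\chi((\lambda_n+\eta_n)\cdot)}\,d\mu=\int f\,\overline{\chi(\lambda_n\cdot)}\,d\mu=\langle f,\chi(\lambda_n\cdot)\rangle_\mu,
\end{equation*}
so that $\sum_n|\langle f,\chi((\lambda_n+\eta_n)\cdot)\rangle_\mu|^2=\sum_n|\langle f,\chi(\lambda_n\cdot)\rangle_\mu|^2\le B\|f\|_\mu^2$, which is exactly the Bessel bound with the same constant. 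I expect essentially no analytic obstacle here: the entire content lies in the elementary but crucial observation that, on the unit-ball support, the small frequency shift $\eta_n$ is invisible to $\chi$. This is precisely the non-Archimedean phenomenon of characters being locally constant near the origin, which has no Euclidean analogue and is what makes the local-field argument so much simpler than the one over $\mathbb{R}^d$.
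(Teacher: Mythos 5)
Your proof is correct and is essentially identical to the paper's own argument: both rest on the single observation that $\chi\equiv 1$ on $\mathfrak{D}$, so that $\chi((\lambda_n+\eta_n)\cdot x)=\chi(\lambda_n\cdot x)$ on $\mathrm{supp}\,\mu\subset\mathfrak{D}^d$, making the two Bessel sums literally equal. You merely fill in the details the paper leaves implicit (the ultrametric bound $|\eta_n\cdot x|\le 1$ and the passage to inner products), which is fine.
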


   \begin{proof}
      It is just because $\chi(\cdot)$ is equal to $1$ on $\mathfrak{D}^d$
      so that
      $\chi((\lambda_n+\eta_n)x) = \chi(\lambda_nx)$ for
      all $x \in \mathfrak{D}^d$.
   \end{proof}

   The following lemma follows directly from the definition.

   \begin{lemma}\label{Decomp}
     Let $\Lambda$ be partitioned into a finite number of subsets
      $\Lambda_1, \cdots, \Lambda_r$. Then $\Lambda$ is a Bessel sequence
      of $L^2(\mu)$ iff every $\Lambda_j$ ($j=1, 2, \cdots, r$) is a Bessel
     sequence of $L^2(\mu)$.
   \end{lemma}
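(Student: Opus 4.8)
The plan is to argue directly from the defining inequality of a Bessel sequence, namely the second inequality in (\ref{FF}): the set $\Lambda$ is a Bessel sequence of $L^2(\mu)$ precisely when there is a constant $B>0$ such that $\sum_{\lambda\in\Lambda}|\langle f,\chi_\lambda\rangle_\mu|^2\le B\|f\|^2$ for every $f\in L^2(\mu)$. I would prove the two implications separately, and both reduce to the elementary fact that a sum of nonnegative terms indexed by a disjoint partition equals the sum of the partial sums over the blocks.

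For the forward implication, suppose $\Lambda$ is a Bessel sequence with constant $B$. Since each $\Lambda_j$ is a subset of $\Lambda$ and every summand is nonnegative, for any $f\in L^2(\mu)$ one has $\sum_{\lambda\in\Lambda_j}|\langle f,\chi_\lambda\rangle_\mu|^2\le\sum_{\lambda\in\Lambda}|\langle f,\chi_\lambda\rangle_\mu|^2\le B\|f\|^2$. Hence each $\Lambda_j$ is a Bessel sequence, and in fact with the very same constant $B$.

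For the converse, suppose each $\Lambda_j$ is a Bessel sequence with constant $B_j$, for $j=1,\dots,r$. Because the blocks $\Lambda_1,\dots,\Lambda_r$ partition $\Lambda$ into finitely many disjoint pieces, for any $f\in L^2(\mu)$ the full sum decomposes as $\sum_{\lambda\in\Lambda}|\langle f,\chi_\lambda\rangle_\mu|^2=\sum_{j=1}^r\sum_{\lambda\in\Lambda_j}|\langle f,\chi_\lambda\rangle_\mu|^2\le\bigl(\sum_{j=1}^r B_j\bigr)\|f\|^2$. Thus $\Lambda$ is a Bessel sequence with the admissible constant $B=B_1+\cdots+B_r$.

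As the author already remarks, there is essentially no obstacle here: the statement follows immediately from the definition. The only two points that genuinely carry the argument are that the partition is \emph{disjoint} (so that the global sum is exactly the sum over the blocks, with no double counting) and that it is \emph{finite} (so that $\sum_j B_j$ is again a finite constant, yielding a legitimate Bessel bound in the converse). Neither step uses any structure specific to local fields, so the lemma holds verbatim in this generality.
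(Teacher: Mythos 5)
Your proof is correct and is exactly the argument the paper has in mind: the paper states that the lemma ``follows directly from the definition'' and gives no further details, and your two-directional argument (monotonicity of nonnegative sums for one direction, summing the finitely many constants $B_1+\cdots+B_r$ for the other) is precisely that routine verification. Nothing is missing.
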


   {\em Proof of Theorem \ref{Thm-Bessel}.} By Lemma \ref{Bessel-scaled},
   we can assume that the measure $\mu$ is supported by $\mathfrak{D}^d$.
   Let $\delta_n = \gamma_n -\lambda_n$. Since
  $|\delta_n|\le C$, we can write
  $$
      \delta_n = t_n + \eta_n \quad \mbox{\rm with}\ t_n \in \Delta, |\eta_n|\le 1
  $$
  where $\Delta$ is a finite set of the standard quasi-lattice. Then
  $$
    \gamma_n = \lambda_n - t_n - \eta_n.
  $$
  For any $k\in \Delta$, let
       $$
          \Lambda_k = \{\lambda_n:   t_n = k\}.
       $$
  By Lemma \ref{Decomp}, $\Lambda_k$ is a Bessel sequence of $L^2(\mu)$
  and so is its translation $\Lambda_k - k$. The sequence $\{\lambda_n -t_n\}$ is nothing but the finite union of  $\Lambda_k - k$ ($k\in \Delta$), which is a Bessel sequence of $L^2(\mu)$, again thanks to Lemma \ref{Decomp}.
   Now we can conclude  by Lemma \ref{Bessel-perturbed}.


\section{Landau operators and Beurling density}
Landau operators on $\mathbb{R}^d$ were introduced and studied by Landau \cite{Landau1967}.
They allowed the establishment of some fundamental relations between
the Beurling density and the sampling property and interpolation property
of discrete set in $\widehat{\mathbb{R}}^d$. We can adapt the theory to the
local fields.

\subsection{Landau operators and basic properties}

Let $\Omega \subset K^d$ and $\Delta\subset \widehat{K}^d$ be two Borel sets.
Assume $$
0<\mathfrak{m}(\Omega)<\infty,\quad
0<\mathfrak{m}(\Delta)<\infty.$$
 Any function $f$ in $L^2(\Omega)$ is identified with the function
which is equal to $f$ on $\Omega$ and to $0$ outside $\Omega$. With this in mind, we have
$L^2(\Omega) \subset L^2(K^d)$. Any function $f$ in $L^2(\Omega)$ is $L^1$-integrable,
its Fourier transform $\widehat{f}$ belongs to $C_0(\widehat{K}^d)$ by the Riemann-Lebesgue Lemma.

A discrete set $\Lambda \subset \widehat{K}^d$ is called a {\em set of sampling}
of $L^2(\Omega)$ if there exists a constant $C>0$ such that
$$
    \forall f \in L^2(\Omega),
    \qquad \|f\|^2 \le C \sum_{\lambda \in \Lambda} |\widehat{f}(\lambda)|^2.
$$
Let
$$
      L^2(\Omega)^{\widehat{}} =\{g \in L^2(\widehat{K}^d): \exists f \in L^2(\Omega) \
      \mbox{\rm such \ that}\ g = \widehat{f} \}.
$$
The inverse Fourier transform of $f\in L^1(K^d)$ is denoted by
$$
   \check{f}(x) =\int f(\xi)\chi_x(\xi) d \xi.
$$

Define $T_\Omega: L^2(\widehat{K}^d) \to L^2(\Omega)^{\widehat{}}$ by
$$
    T_\Omega g = (1_\Omega \check{g})^{\hat{}}.
$$
Let $P_\Delta$ be the orthogonal projection
from $L^2(\widehat{K}^d)$ onto $L^2(\Delta)$, defined by
$$
                P_\Omega g(\xi) = 1_\Delta(\xi) g(\xi).
$$

The {\em Landau operator} $\mathcal{L}=\mathcal{L}_{\Omega, \Delta}:
 L^2(\widehat{K}^d) \to  L^2(\widehat{K}^d)$ is defined by
 $$
        \mathcal{L} = T_\Omega P_\Delta T_\Omega.
 $$
The {\em auxiliary Landau operator} $\mathcal{L}^\sharp: L^2(\widehat{K}^d) \to
L^2(\widehat{K}^d)$ is defined by
$$
    \mathcal{L}^\sharp = P_\Delta T_\Omega P_\Delta.
$$

\begin{lemma}\label{compactness}
      The Landau operator $\mathcal{L}$ is a positive compact self-adjoint operator. It
      is a Hilbert-Schmidt integral operator with kernel
      $$
         K(\eta, \xi) = \int 1_\Delta(t) \widehat{1_\Omega}(\eta -t)
         \overline{\widehat{1_\Omega}(\xi -t)} dt.
      $$
\end{lemma}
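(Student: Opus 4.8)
The plan is to exploit the factorization $\mathcal{L} = S^{*}S$ with $S = P_\Delta T_\Omega$, which yields self-adjointness and positivity for free, and then to read off compactness and the Hilbert--Schmidt property from the explicit kernel of $S$.

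First I would record that $T_\Omega$ is itself a bounded integral operator. Writing out $T_\Omega g = (1_\Omega \check g)^{\hat{}}$ and inserting $\check g(x) = \int g(\xi)\chi_x(\xi)\,d\xi$, an application of Fubini gives
$$(T_\Omega g)(\eta) = \int \widehat{1_\Omega}(\eta - \xi)\, g(\xi)\, d\xi,$$
so $T_\Omega$ has kernel $\widehat{1_\Omega}(\eta-\xi)$. Here $\widehat{1_\Omega}\in L^2(\widehat{K}^d)$ by Plancherel (as $\mathfrak{m}(\Omega)<\infty$), so the convolution converges pointwise by Cauchy--Schwarz and $T_\Omega$ is bounded; writing $\mathcal{F}$ for the (unitary) Fourier transform, $T_\Omega$ is the conjugate $\mathcal{F}(1_\Omega\,\cdot\,)\mathcal{F}^{-1}$ of multiplication by $1_\Omega$, hence the orthogonal projection of $L^2(\widehat{K}^d)$ onto $L^2(\Omega)^{\widehat{}}$, and in particular $T_\Omega = T_\Omega^{*}$. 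Since $P_\Delta$ is an orthogonal projection ($P_\Delta = P_\Delta^{*} = P_\Delta^{2}$), setting $S = P_\Delta T_\Omega$ gives $S^{*} = T_\Omega P_\Delta$ and hence $S^{*}S = T_\Omega P_\Delta^{2} T_\Omega = T_\Omega P_\Delta T_\Omega = \mathcal{L}$. This at once shows $\mathcal{L}$ is self-adjoint and positive, because $\langle \mathcal{L}g, g\rangle = \|Sg\|^{2} = \|P_\Delta T_\Omega g\|^{2}\ge 0$.

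Next I would show $S$ is Hilbert--Schmidt. From the kernel of $T_\Omega$, the operator $S = P_\Delta T_\Omega$ has kernel $S(\eta,\xi) = 1_\Delta(\eta)\,\widehat{1_\Omega}(\eta-\xi)$, and therefore
$$\|S\|_{HS}^{2} = \int_\Delta \Big(\int |\widehat{1_\Omega}(\eta-\xi)|^{2}\,d\xi\Big)\,d\eta = \int_\Delta \|\widehat{1_\Omega}\|_2^{2}\,d\eta = \mathfrak{m}(\Delta)\,\mathfrak{m}(\Omega) < \infty,$$
using translation-invariance of $\mathfrak{m}$ and Plancherel $\|\widehat{1_\Omega}\|_2^{2} = \mathfrak{m}(\Omega)$. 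Thus $S$ is Hilbert--Schmidt, whence $\mathcal{L} = S^{*}S$ is trace class, and in particular Hilbert--Schmidt and compact.

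Finally, to identify the kernel of $\mathcal{L}$, I would compose the integral operators $T_\Omega$, $P_\Delta$, $T_\Omega$ and apply Fubini (legitimate since all factors lie in $L^2$), obtaining
$$(\mathcal{L}g)(\eta) = \int\Big(\int 1_\Delta(t)\,\widehat{1_\Omega}(\eta-t)\,\widehat{1_\Omega}(t-\xi)\,dt\Big)g(\xi)\,d\xi,$$
and the stated form follows from the elementary identity $\widehat{1_\Omega}(t-\xi) = \overline{\widehat{1_\Omega}(\xi-t)}$, valid because $1_\Omega$ is real-valued. The only points needing care are the measure-theoretic justifications --- that $\widehat{1_\Omega}\in L^2$, that Fubini applies to the composed convolution kernels, and the change of variables invoking invariance of the Haar measure --- but these are all routine consequences of Plancherel and the structure recalled in Section 2, so I expect no serious obstacle.
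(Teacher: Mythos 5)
Your proposal is correct and follows essentially the same route as the paper: the paper likewise computes $T_\Omega f(\xi)=\int f(t)\,\widehat{1_\Omega}(\xi-t)\,dt$, shows $P_\Delta T_\Omega$ is Hilbert--Schmidt with kernel $\Phi(\xi,t)=1_\Delta(\xi)\widehat{1_\Omega}(\xi-t)$ whose square-integral is $\mathfrak{m}(\Omega)\mathfrak{m}(\Delta)$, invokes the fact that $T_\Omega$ and $P_\Delta$ are orthogonal projections for self-adjointness and positivity, and composes the kernels to obtain $K$. The only differences are cosmetic: your explicit factorization $\mathcal{L}=S^{*}S$ (and the trace-class remark) spells out what the paper dismisses as ``easy to check,'' and where you invoke Fubini to derive the kernel of $T_\Omega$ (strictly valid first for $g\in L^1\cap L^2$, then by density) the paper gets the same formula directly from the Parseval identity.
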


\begin{proof}Recall that
    $$
       T_\Omega f(\xi) = \int 1_\Omega(x) \check{f} (x) \overline{\chi(\xi\cdot x)} d x
       =  \int  \check{f} (x) \overline{1_\Omega(x) \chi(\xi\cdot x)} d x.
    $$
    Notice that the Fourier transform of $1_\Omega(x)\chi(\xi\cdot x)$
    is equal to $\widehat{1_\Omega} (t-\xi)$. So, by the Parseval identity, we get
    \begin{equation}\label{T}
       T_\Omega f(\xi) = \int f (t) \widehat{1_\Omega}(\xi -t) d t.
 \end{equation}
    Hence $P_\Delta T_\Omega$ is an integral operator:
  \begin{equation}\label{PT}
       P_\Delta T_\Omega f(\xi) = \int f (t) \Phi(\xi, t) d t
    \end{equation}
    with the kernel $\Phi(\xi, t) = 1_\Delta (\xi)\widehat{1_\Omega}(\xi -t)$, which satisfies
    $$
       \int \int |\Phi(\xi, t)|^2 dt d\xi = \|1_\Delta\|^2\| 1_\Omega\|^2 < \infty.
    $$
    So, $P_\Delta T_\Omega$ is a Hilbert-Schmidt operator hence compact. Thus
    the Landau operator $T_\Omega P_\Delta T_\Omega$ is also compact. Both $T_\Omega$
    and $P_\Delta$ being orthogonal projections, it is then easy to check that
    $\mathcal{L}$ is self-adjoint and positive. From (\ref{T}) and (\ref{PT}), we get
    $$
       \mathcal{L}f(\xi) = \int \left(\int f(\eta) \Phi(t, \eta)  d\eta\right)\widehat{1_\Omega}(\xi -t) dt
       = \int f(\eta) K(\xi, \eta) d \eta
    $$
    where $$K(\xi, \eta)=
    \int \Phi(t, \eta) \widehat{1_\Omega}(\xi -t) dt
    = \int  1_\Delta(t) \widehat{1_\Omega}(t-\eta)\widehat{1_\Omega}(\xi -t) dt.
    $$
\end{proof}

By the above lemma, the Landau operator $\mathcal{L}$ admits a sequence of eigenvalues
that we arrange in decreasing order
$$
   1\ge \lambda_1 \ge \lambda_2 \ge \cdots \ge \lambda_k \ge \cdots \ge 0
$$
with $0$ as the only cluster point, where the multiplicity is taken into account.
The Courant-Fischer-Weyl min-max principle states that the $k$-th eigenvalue is equal to
$$
   \lambda_k = \max_{S_k} \min_{x \in S_k} \frac{\|\mathcal{L} x\|}{\|x\|}
   = \min_{S_{k-1}} \max_{x \in S_{k-1}^\perp} \frac{\|\mathcal{L} x\|}{\|x\|}
$$
where $S_k$ represents an arbitrary subspace of dimension $k$ and $S_{k-1}^\perp$
represents the orthogonal complementary space of $S_{k-1}$.

Let $\{\phi_k\}_{k\ge 1}$ be an orthonormal basis with $\phi_k$ as the eigenvector of
$\lambda_k$. The kernel $K$  is equal to
$$
    K(\eta, \xi) = \sum_{k=1}^\infty \lambda_k \phi_k(\eta) \overline{\phi_k(\xi)}.
$$
It follows that
\begin{eqnarray}
   \sum_{k=1}^\infty \lambda_k
   & = & \int K(\eta, \eta) d \eta,\\
   \sum_{k=1}^\infty \lambda^2_k
   & = & \int\int |K(\eta, \xi)|^2 d \eta d\xi.
\end{eqnarray}

Since the eigenvalue $\lambda_k$ depends on $\Omega$ and $\Delta$, we will denote
it by $\lambda_k(\Omega, \Delta)$. We will study its dependence on $\Omega$ and $\Delta$.

First, we observe that the auxiliary Landau operator $\mathcal{L}^\sharp$
have the same spectrum as the Landau operator  $\mathcal{L}$.

\begin{lemma} $\mathcal{L}^\sharp$ and $\mathcal{L}$ have the same eigenvalues.
\end{lemma}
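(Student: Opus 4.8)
The plan is to exploit the fact, already recorded in the proof of Lemma~\ref{compactness}, that both $T_\Omega$ and $P_\Delta$ are \emph{orthogonal} projections, i.e. they are idempotent and self-adjoint: $T_\Omega^2=T_\Omega=T_\Omega^*$ and $P_\Delta^2=P_\Delta=P_\Delta^*$. Introduce the single operator
$$
   M := P_\Delta T_\Omega, \qquad M^* = T_\Omega P_\Delta .
$$
Then, inserting $P_\Delta=P_\Delta^2$ and $T_\Omega=T_\Omega^2$ respectively, I would rewrite the two operators as
$$
   \mathcal{L} = T_\Omega P_\Delta T_\Omega = T_\Omega P_\Delta\, P_\Delta T_\Omega = M^* M,
   \qquad
   \mathcal{L}^\sharp = P_\Delta T_\Omega P_\Delta = P_\Delta T_\Omega\, T_\Omega P_\Delta = M M^* .
$$
So the lemma reduces to the classical statement that $M^*M$ and $MM^*$ have the same nonzero eigenvalues, with the same multiplicities.

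To prove that reduction explicitly, suppose $\mathcal{L}x=\lambda x$ with $\lambda\neq 0$ and $x\neq 0$, and set $y:=Mx$. Then
$$
   \mathcal{L}^\sharp y = M M^* M x = M\,\mathcal{L}x = \lambda\, Mx = \lambda\, y,
$$
and moreover $\|y\|^2=\langle M^*Mx,x\rangle=\lambda\|x\|^2>0$, so $y\neq 0$; hence every nonzero eigenvalue of $\mathcal{L}$ is a nonzero eigenvalue of $\mathcal{L}^\sharp$. For the multiplicities, I would check that $\tfrac{1}{\sqrt\lambda}M$ maps the $\lambda$-eigenspace of $\mathcal{L}$ into that of $\mathcal{L}^\sharp$ and that $\tfrac{1}{\sqrt\lambda}M^*$ provides the two-sided inverse (using $M^*M=\lambda\,\mathrm{Id}$ and $MM^*=\lambda\,\mathrm{Id}$ on the respective eigenspaces), so these two operators are mutually inverse isometries between the eigenspaces; thus the two $\lambda$-eigenspaces are isomorphic and the multiplicities coincide. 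Swapping the roles of $M$ and $M^*$ gives the reverse inclusion, so the nonzero spectra agree as sets and as multisets.

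Concerning the eigenvalue $0$: since $\mathcal{L}$ and $\mathcal{L}^\sharp$ both act on the infinite-dimensional space $L^2(\widehat{K}^d)$ and are compact, $0$ is an eigenvalue of infinite multiplicity for each, so the assertion "same eigenvalues" is genuinely a statement about the nonzero part of the spectrum, which is precisely what the later min-max and trace computations of $\sum_k\lambda_k$ and $\sum_k\lambda_k^2$ require. I expect no serious obstacle here; the only point demanding care is the legitimacy of the factorizations $\mathcal{L}=M^*M$ and $\mathcal{L}^\sharp=MM^*$, which rests entirely on the idempotency and self-adjointness of $T_\Omega$. The self-adjoint idempotence of $P_\Delta$ is immediate (it is multiplication by $1_\Delta$), while that of $T_\Omega$ follows because $T_\Omega$ is the conjugate, by the unitary Fourier transform, of the multiplication operator $1_\Omega$, and this identification has already been used in Lemma~\ref{compactness}.
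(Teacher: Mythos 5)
Your proof is correct, and in fact the paper offers no proof at all of this lemma --- it is stated as a bare observation --- so your argument fills a genuine gap rather than duplicating or deviating from anything. The route you take is the standard (and surely intended) one: the factorizations $\mathcal{L}=M^*M$ and $\mathcal{L}^\sharp=MM^*$ with $M=P_\Delta T_\Omega$ are legitimate because $T_\Omega$ and $P_\Delta$ are idempotent and self-adjoint, a fact the paper itself records in the proof of Lemma~\ref{compactness}, and your intertwining argument with the mutually inverse isometries $\lambda^{-1/2}M$ and $\lambda^{-1/2}M^*$ correctly gives equality of the nonzero eigenvalues together with their multiplicities.

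One small imprecision, peripheral to the lemma: compactness on an infinite-dimensional space does not by itself make $0$ an eigenvalue (a compact operator can be injective). Here $0$ is indeed an eigenvalue of infinite multiplicity for both operators, but the correct reason is that $\ker\mathcal{L}\supset \ker T_\Omega=\bigl(L^2(\Omega)^{\widehat{}}\,\bigr)^\perp$ and $\ker\mathcal{L}^\sharp\supset\ker P_\Delta=L^2(\Delta^c)$, and both of these subspaces are infinite-dimensional because $\Omega$ and $\Delta$ have finite Haar measure, so their complements have infinite measure. Since the later min-max and trace computations only involve the nonzero spectrum, this does not affect anything downstream.
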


\begin{lemma} \label{L_property} The eigenvalues
$\lambda_k(\Omega, \Delta)$ ($k =1, 2, \cdots$) of the Landau operator
$\mathcal{L}$ have the following properties:\\
 {\rm (a)}   {\rm (Translation invariance)} $\lambda_k(\Omega, \Delta)
=\lambda_k(\Omega+\sigma, \Delta+\tau)$ for all $\sigma, \tau \in K^d$.\\
 {\rm (b)}   {\rm (Scaling invariance)} $\lambda_k(\Omega, \Delta)
=\lambda_k(a\Omega, a^{-1}\Delta)$ for any $ a \in K^*$.\\
 {\rm (c)}   {\rm (Symmetry)} $\lambda_k(\Omega, \Delta)
=\lambda_k(\Delta, \Omega)$.\\
 {\rm (d)}  {\rm (Monotonicity)} $\lambda_k(\Omega, \Delta_1)
\le \lambda_k(\Omega, \Delta_2)$ if $\Delta_1\subset \Delta_2$.\\
 {\rm (e)}   {\rm (Sum of eigenvalues)} $\sum_{k=1}^\infty
\lambda_k(\Omega, \Delta) = \mathfrak{m}(\Omega) \mathfrak{m}(\Delta)$.\\
{\rm (f)}   {\rm (Sum of eigenvalues squared)} $\sum_{k=1}^\infty
\lambda_k(\Omega, \Delta)^2 = \int_\Delta \int_\Delta |\widehat{1_\Omega}(u-v)|^2 d u dv$.\\
{\rm (g)}   {\rm (Superadditivity)} For $\Delta = \Delta_1 \sqcup\Delta_2$, we have
$$
    \sum_{k=1}^\infty
\lambda_k(\Omega, \Delta)^2 \ge \sum_{k=1}^\infty
\lambda_k(\Omega, \Delta_1)^2 + \sum_{k=1}^\infty
\lambda_k(\Omega, \Delta_2)^2.
$$
{\rm (h)}   {\rm (Weyl-Courant estimates)}
\begin{eqnarray*}
    \lambda_{k+1}(\Omega, \Delta)
    &\le & \sup\left\{\|P_\Delta f\|^2: \|f\|=1, f \in L^2(\Omega)^{\widehat{}}\cap C_k^\perp \right\},\\
    \lambda_{k}(\Omega, \Delta)
    & \ge & \inf\left\{\ \|P_\Delta f\|^2: \|f\|=1, f \in  C_k \right\}
\end{eqnarray*}
where $C_k$ is any $k$-dimensional subspace of $L^2(K^d)$.
\end{lemma}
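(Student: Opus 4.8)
The plan is to read off all eight assertions from the structural fact that $T_\Omega$ and $P_\Delta$ are orthogonal projections on $L^2(\widehat K^d)$. Indeed, writing $\mathcal{F}$ for the Fourier transform, $T_\Omega = \mathcal{F}\,M_{1_\Omega}\,\mathcal{F}^{-1}$ is the unitary conjugate of multiplication by $1_\Omega$, so $T_\Omega = T_\Omega^* = T_\Omega^2$ with range $L^2(\Omega)^{\widehat{}}$, while $P_\Delta = M_{1_\Delta}$. Hence $\langle \mathcal L f,f\rangle = \|P_\Delta T_\Omega f\|^2$, which equals $\|P_\Delta f\|^2$ when $f\in L^2(\Omega)^{\widehat{}}$, and $\mathcal L$ kills $(L^2(\Omega)^{\widehat{}})^\perp$. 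I will treat (a)--(c) by unitary equivalence, (d) and (h) by the Courant--Fischer principle, and (e)--(g) by the trace and Hilbert--Schmidt formulas already recorded.

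For (a) and (b) I will exhibit explicit unitaries conjugating $\mathcal L_{\Omega,\Delta}$ into the required operator, which settles the claim because eigenvalues are unitary invariants. The modulation $M_\sigma g(\xi)=\chi(\sigma\cdot\xi)g(\xi)$ commutes with $P_\Delta$ and satisfies $M_\sigma^* T_\Omega M_\sigma = T_{\Omega+\sigma}$ (translating $\Omega$ multiplies $\widehat{1_\Omega}$ by a unimodular character), while the translation $S_\tau g(\xi)=g(\xi-\tau)$ satisfies $S_\tau P_\Delta S_\tau^* = P_{\Delta+\tau}$ and commutes with $T_\Omega$, whose kernel $\widehat{1_\Omega}(\xi-t)$ depends only on $\xi-t$. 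Thus $(S_\tau M_\sigma^*)\mathcal L_{\Omega,\Delta}(S_\tau M_\sigma^*)^{*}=\mathcal L_{\Omega+\sigma,\Delta+\tau}$, giving (a). For (b) the dilation $D_a g(\xi)=|a|^{d/2}g(a\xi)$ is unitary and, from $\widehat{1_{a\Omega}}(\eta)=|a|^{d}\widehat{1_\Omega}(a\eta)$, one verifies $D_a T_\Omega D_a^{-1}=T_{a\Omega}$ and $D_a P_\Delta D_a^{-1}=P_{a^{-1}\Delta}$, whence $D_a\mathcal L_{\Omega,\Delta}D_a^{-1}=\mathcal L_{a\Omega,a^{-1}\Delta}$.

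The symmetry (c) is the delicate point, since $\Omega$ and $\Delta$ live a priori in dual copies of $K^d$. I will introduce the operator $G\colon L^2(\Omega)\to L^2(\Delta)$, $Gf=1_\Delta\widehat f$, with adjoint $G^*g=1_\Omega\check g$. A direct computation gives $GG^*=P_\Delta T_\Omega P_\Delta=\mathcal L^\sharp$, whereas $G^*G$ is the integral operator on $L^2(\Omega)$ with kernel $1_\Omega(x)1_\Omega(y)\widehat{1_\Delta}(y-x)$, which is the complex conjugate of the auxiliary operator $\mathcal L^\sharp_{\Delta,\Omega}$ obtained by interchanging $\Omega$ and $\Delta$ (using $\widehat{1_\Delta}(y-x)=\overline{\widehat{1_\Delta}(x-y)}$). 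Since $GG^*$ and $G^*G$ share their nonzero eigenvalues, and since $\mathcal L$ and $\mathcal L^\sharp$ share eigenvalues by the preceding lemma, it follows that $\lambda_k(\Omega,\Delta)=\lambda_k(\Delta,\Omega)$. I expect the main obstacle to be exactly this bookkeeping: verifying that $G^*G$ really is $\mathcal L^\sharp_{\Delta,\Omega}$ up to the harmless conjugation, i.e. that the space/frequency roles genuinely match.

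The rest is computational. Part (d) follows because $\Delta_1\subset\Delta_2$ gives $P_{\Delta_1}\le P_{\Delta_2}$, hence $\langle\mathcal L_{\Omega,\Delta_1}f,f\rangle=\|P_{\Delta_1}T_\Omega f\|^2\le\|P_{\Delta_2}T_\Omega f\|^2=\langle\mathcal L_{\Omega,\Delta_2}f,f\rangle$, so the min-max formula yields $\lambda_k(\Omega,\Delta_1)\le\lambda_k(\Omega,\Delta_2)$. For (e) I compute $\sum_k\lambda_k=\int K(\eta,\eta)\,d\eta=\int\!\!\int 1_\Delta(t)|\widehat{1_\Omega}(\eta-t)|^2\,dt\,d\eta=\mathfrak m(\Delta)\,\mathfrak m(\Omega)$ by Plancherel. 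For (f) it is cleaner to use $\mathcal L^\sharp$: its kernel is $1_\Delta(\xi)1_\Delta(t)\widehat{1_\Omega}(\xi-t)$, and since $\sum_k\lambda_k^2$ equals the integral of the modulus squared of this kernel, $\sum_k\lambda_k^2=\int_\Delta\!\int_\Delta|\widehat{1_\Omega}(u-v)|^2\,du\,dv$. Part (g) then follows from (f) by expanding $\int_\Delta\int_\Delta$ over $\Delta=\Delta_1\sqcup\Delta_2$ and discarding the two nonnegative cross terms. Finally (h) is the two halves of Courant--Fischer: taking $C_k$ to be the span of the top $k$ eigenvectors (which lie in $L^2(\Omega)^{\widehat{}}$, where $\langle\mathcal L f,f\rangle=\|P_\Delta f\|^2$) gives the upper estimate, and testing $\mathcal L$ against a trial $k$-dimensional subspace gives the lower one.
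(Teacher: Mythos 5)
The paper itself states this lemma without proof (it is the adaptation of Landau's lemmas to $K^d$, and the author simply records it), so there is no argument of the author's to compare against; I judge your proposal on its own. Your treatment of (a)--(g) is correct. The conjugation identities $M_\sigma^* T_\Omega M_\sigma = T_{\Omega+\sigma}$, $S_\tau P_\Delta S_\tau^* = P_{\Delta+\tau}$, $D_a T_\Omega D_a^{-1}=T_{a\Omega}$, $D_a P_\Delta D_a^{-1}=P_{a^{-1}\Delta}$ all check out (note that $M_\sigma$ and $S_\tau$ commute only up to the unimodular scalar $\chi(\sigma\cdot\tau)$, which is harmless for conjugation). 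The $GG^*$ versus $G^*G$ argument for (c), together with the observation that the kernel of $G^*G$ is the complex conjugate of that of $\mathcal{L}^\sharp_{\Delta,\Omega}$, is exactly the right way to resolve the duality bookkeeping, and it dovetails with the paper's preceding (also unproved) lemma that $\mathcal{L}$ and $\mathcal{L}^\sharp$ share eigenvalues. Parts (d)--(g) follow as you say from form monotonicity plus min--max, the trace identity, the Hilbert--Schmidt identity applied to $\mathcal{L}^\sharp$, and nonnegativity of the discarded cross terms.

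The gap is in (h). Both estimates are quantified over \emph{every} $k$-dimensional subspace $C_k$ of $L^2(K^d)$, and this quantifier is the entire content: in the subsequent sampling lemma the paper applies the upper estimate with $C_k$ spanned by the functions $h(\lambda-\cdot)$, $\lambda\in\Delta^+\cap\Lambda$, which do \emph{not} lie in $L^2(\Omega)^{\widehat{}}$. Taking $C_k$ to be the span of the top $k$ eigenvectors establishes the inequality for that single subspace only, which is useless for the application. For arbitrary $C_k$ you need a reduction: apply min--max to the subspace $T_\Omega C_k$ (of dimension $\le k$); for $f\perp T_\Omega C_k$ with $\|f\|=1$, the vector $g=T_\Omega f$ lies in $L^2(\Omega)^{\widehat{}}$, is orthogonal to $C_k$ (because $\langle g,c\rangle=\langle f,T_\Omega c\rangle=0$), has $\|g\|\le 1$, and satisfies $\langle\mathcal{L}f,f\rangle=\|P_\Delta g\|^2$; this yields $\lambda_{k+1}\le\sup\{\|P_\Delta u\|^2:\|u\|=1,\ u\in L^2(\Omega)^{\widehat{}}\cap C_k^\perp\}$. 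For the lower estimate, note moreover that the inequality with $C_k$ an arbitrary subspace of $L^2(K^d)$ is literally false: take $d=1$, $\Omega=\Delta=\mathfrak{P}$, and $C_1$ spanned by $q^{1/2}1_{\mathfrak{P}}$; then the infimum equals $1$, while $\lambda_1\le\sum_k\lambda_k=\mathfrak{m}(\Omega)\mathfrak{m}(\Delta)=q^{-2}$ by (e). The correct hypothesis --- the one in Landau's original lemma and the one actually used in the paper --- is $C_k\subset L^2(\Omega)^{\widehat{}}$, and under that restriction your ``testing'' argument is complete, since $\langle\mathcal{L}f,f\rangle=\|P_\Delta f\|^2$ there. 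So you should prove the upper bound via the $T_\Omega C_k$ reduction, and either restrict $C_k$ or replace $\|P_\Delta f\|^2$ by $\|P_\Delta T_\Omega f\|^2$ in the lower bound.
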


 \medskip

\subsection{Eigenvalues $\lambda_k(\Omega, \Delta)$ when $\Delta \in \mathcal{A}_a, \Omega \in \mathcal{A}_b$}

We first look at the case where both $\Omega$ and $\Delta$
are balls.

 \begin{lemma}\label{Eigen-ball} Take $\Omega=B(0, q^b)$ and $\Delta = B(0, q^a)$ with $a+b \ge 0$.
 Then the Landau operator $\mathcal{L}$ admits $1$ and $0$ as its only eigenvalues.  The eigenvalue $1$ has multiplicity $q^{a+b}$ and  the eigenvectors associated to $1$ are
 $1_B$'s where $B$ varies among all balls contained in $B(0, q^a)$ of radius $q^{-b}$.
 \end{lemma}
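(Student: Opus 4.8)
The plan is to compute the integral kernel $K(\eta,\xi)$ of $\mathcal{L}$ furnished by Lemma~\ref{compactness} completely explicitly for this special choice of $\Omega$ and $\Delta$, and then to recognize the resulting operator as an orthogonal projection. First I would substitute the formula $\widehat{1_\Omega}=\widehat{1_{B(0,q^b)}}=q^{b}\,1_{B(0,q^{-b})}$ (from the ball-transform lemma of Section 2.4) into $K(\eta,\xi)=\int 1_{B(0,q^a)}(t)\,\widehat{1_\Omega}(\eta-t)\,\overline{\widehat{1_\Omega}(\xi-t)}\,dt$. Since $\widehat{1_{B(0,q^b)}}$ is real and nonnegative, this turns the kernel into $q^{2b}\int_{B(0,q^a)}1_{B(\eta,q^{-b})}(t)\,1_{B(\xi,q^{-b})}(t)\,dt$, that is, $q^{2b}$ times the Haar measure of the triple intersection $B(0,q^a)\cap B(\eta,q^{-b})\cap B(\xi,q^{-b})$.

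The heart of the argument is then purely ultrametric. Because any two balls of the same radius are either equal or disjoint, $B(\eta,q^{-b})$ and $B(\xi,q^{-b})$ coincide when $|\eta-\xi|\le q^{-b}$ and are disjoint otherwise; and because the hypothesis $a+b\ge 0$ forces $q^{-b}\le q^a$, each ball $B(\eta,q^{-b})$ is either contained in $B(0,q^a)$ (exactly when $|\eta|\le q^a$) or disjoint from it. Combining these two dichotomies, the triple intersection has measure $q^{-b}$ precisely when $\eta$ and $\xi$ lie in a common radius-$q^{-b}$ ball $B$ sitting inside $B(0,q^a)$, and measure $0$ otherwise. Writing $\mathcal{B}$ for the family of balls of radius $q^{-b}$ contained in $B(0,q^a)$, of which there are $\mathfrak{m}(B(0,q^a))/q^{-b}=q^{a+b}$, this yields the clean formula $K(\eta,\xi)=q^{b}\sum_{B\in\mathcal{B}}1_B(\eta)\,1_B(\xi)$.

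From this I would read off that $\mathcal{L}f=\sum_{B\in\mathcal{B}}\langle f,u_B\rangle\,u_B$, where the functions $u_B=q^{b/2}1_B$ form an orthonormal family (the $1_B$ have pairwise disjoint supports and $\|1_B\|^2=\mathfrak{m}(B)=q^{-b}$). Hence $\mathcal{L}$ is exactly the orthogonal projection onto the $q^{a+b}$-dimensional subspace $\operatorname{span}\{1_B:B\in\mathcal{B}\}$, so its only eigenvalues are $1$, with multiplicity $q^{a+b}$ and eigenvectors the $1_B$, and $0$; this is precisely the assertion. I expect the one delicate point to be the ultrametric bookkeeping of the triple intersection, namely applying the two ``equal-or-disjoint'' dichotomies consistently (in particular checking that $\eta\in B(0,q^a)$ iff $\xi\in B(0,q^a)$ whenever $|\eta-\xi|\le q^{-b}$, which is what makes the kernel Hermitian) and verifying that the count of balls in $\mathcal{B}$ genuinely uses $a+b\ge0$; the remaining steps are routine.

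As a conceptual cross-check I would keep in reserve the observation that, by formula (\ref{T}), $T_\Omega$ is the local-averaging (conditional expectation) operator at scale $q^{-b}$, while $P_\Delta$ is multiplication by $1_{B(0,q^a)}$. Since $a+b\ge0$ makes $B(0,q^a)$ a union of whole radius-$q^{-b}$ balls, the indicator $1_{B(0,q^a)}$ is measurable for the $\sigma$-algebra generated by those balls, so the two orthogonal projections $T_\Omega$ and $P_\Delta$ commute. Consequently $\mathcal{L}=T_\Omega P_\Delta T_\Omega=T_\Omega P_\Delta$ is automatically the orthogonal projection onto $L^2(\Omega)^{\widehat{}}\cap L^2(\Delta)$, which one checks equals $\operatorname{span}\{1_B:B\in\mathcal{B}\}$, matching the kernel computation above.
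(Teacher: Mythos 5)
Your proof is correct, but it follows a genuinely different route from the paper's. The paper argues by trace identities: combining Lemma~\ref{L_property} (e), (f) with the Fourier integral of Lemma~\ref{D-int} gives $\sum_k \lambda_k = \sum_k \lambda_k^2 = q^{a+b}$, which together with $0\le \lambda_k\le 1$ forces every eigenvalue to be $0$ or $1$ with exactly $q^{a+b}$ ones; it then checks by a direct computation that each $1_{B(c,q^{-b})}$ with $c\in B(0,q^a)$ is fixed by $\mathcal{L}$. You instead evaluate the kernel of Lemma~\ref{compactness} in closed form, $K(\eta,\xi)=q^{b}\sum_{B} 1_B(\eta)1_B(\xi)$, the sum running over the $q^{a+b}$ balls $B$ of radius $q^{-b}$ contained in $B(0,q^a)$, using the ultrametric equal-or-disjoint dichotomies (your bookkeeping there is sound: $a+b\ge 0$ gives $q^{-b}\le q^a$, and the symmetry $\eta\in B(0,q^a)\Leftrightarrow\xi\in B(0,q^a)$ when $|\eta-\xi|\le q^{-b}$ holds for the same reason), and you recognize $\mathcal{L}$ as the orthogonal projection onto the span of these $1_B$; your commuting-projections remark ($T_\Omega$ is conditional expectation at scale $q^{-b}$, $P_\Delta$ is multiplication by an indicator measurable for that $\sigma$-algebra) reaches the same conclusion even more conceptually. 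What your approach buys: it is self-contained (no trace formulas, no a priori bound $\lambda_k\le 1$) and it identifies the eigenspace of $1$ exactly, a fact the paper only records as a remark after the lemma. What the paper's approach buys: the trace-identity template is precisely what generalizes to the following lemma, where $\Omega$ and $\Delta$ are finite unions of balls and an explicit kernel computation would be unwieldy. One shared caveat: like the paper, you use the normalizations $\mathfrak{m}(B(0,q^a))=q^a$ and $\widehat{1_{B(0,q^b)}}=q^b 1_{B(0,q^{-b})}$, which are the one-dimensional ones; in $K^d$ the exponents carry a factor $d$ (multiplicity $q^{d(a+b)}$), but this is a convention of the paper itself, not a gap in your argument.
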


 \begin{proof} By Lemma~\ref{L_property} (e), (f) and Lemma \ref{D-int},
we get
$$
    \sum_{k=1}^\infty \lambda_k = q^{a+b} = \sum_{k=1}^\infty \lambda_k^2.
$$
By the monotonicity of $\lambda_k$ and the fact $0\le\lambda_k\le 1$, the above equality
implies that the first $q^{a+b}$ eigenvalues $\lambda_k$ are equal to $1$ and others are equal to zero.
Assume $c \in B(0, q^a)$. We check that $1_{B(c, q^{-b})} (x)$ is an eigenvector associated to $1$. In fact, since
$B(c, q^{-b}) \subset B(0, q^a)$, we have
\begin{eqnarray*}
    \mathcal{L} 1_{B(c, q^{-b})} (x)
    &= & \int 1_{B(0, q^a)} (x) \widehat{1_{B(0, q^b)}}(x-y) 1_{B(0, q^a)} (y) 1_{B(c, q^{-b})} (y) d y\\
     &= & 1_{B(0, q^a)} (x)  \int q^b 1_{B(0, q^{-b})}(x-y) 1_{B(c, q^{-b})} (y) d y\\
      &= & 1_{B(c, q^{-b})} (x)  \int q^b  1_{B(c, q^{-b})} (y) d y = 1_{B(c, q^{-b})} (x).
\end{eqnarray*}
 \end{proof}

 It follows that  $\mathcal{L}$ is the orthogonal
 projection which can be defined by
 $$
    \mathcal{L} f(x) = \sum_{c \in \mathfrak{p}^{b}\mathbb{L}_{a+b}} 1_{B(c, q^b)}(x) \cdot q^{-db} \int_{B(c, q^b)} f(y) dy.
 $$

\begin{lemma} Let $\Delta \in \mathcal{A}_a$ and $\Omega \in \mathcal{A}_b$ with $a+b\ge 0$. Then
$$
    \lambda_k(\Omega, \Delta) = \left\{
                  \begin{array}{ccl}
                    1 & \mbox{\rm if} &  1\le k \le \mathfrak{m}(\Omega)\mathfrak{m}(\Delta), \\
                    0 & \mbox{\rm if} &   \ \ \ \ \ \, k > \mathfrak{m}(\Omega)\mathfrak{m}(\Delta).
                  \end{array}
    \right.
$$
\end{lemma}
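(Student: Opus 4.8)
The plan is to show that, under the hypothesis $a+b\ge 0$, the Landau operator $\mathcal{L}=T_\Omega P_\Delta T_\Omega$ is in fact an \emph{orthogonal projection}; once this is known, the statement follows at once from the trace formula already at hand. Recall from the proof of Lemma~\ref{compactness} that $T_\Omega$ and $P_\Delta$ are both orthogonal projections on $L^2(\widehat{K}^d)$, that $P_\Delta$ is multiplication by $1_\Delta$, and that $T_\Omega$ is the convolution operator $T_\Omega f(\xi)=\int f(t)\widehat{1_\Omega}(\xi-t)\,dt$ of (\ref{T}). The crucial point is that these two projections \emph{commute}.

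To see the commutation I would simply compare integral kernels. The operator $P_\Delta T_\Omega$ has kernel $1_\Delta(\xi)\,\widehat{1_\Omega}(\xi-t)$, while $T_\Omega P_\Delta$ has kernel $1_\Delta(t)\,\widehat{1_\Omega}(\xi-t)$. These kernels agree: wherever $\widehat{1_\Omega}(\xi-t)\neq 0$ we have $\xi-t\in B(0,q^{-b})$, since $\Omega\in\mathcal{A}_b$ forces $\widehat{1_\Omega}$ to be supported on $B(0,q^{-b})$ (by the lemma computing $\widehat{1_O}$ for $O\in\mathcal{A}_b$). Thus $|\xi-t|\le q^{-b}\le q^{a}$, the last inequality being exactly the hypothesis $a+b\ge0$. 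Because $\Delta\in\mathcal{A}_a$ is a disjoint union of balls of radius $q^{a}$, and two points at distance $\le q^{a}$ lie in the same such ball (these balls being the cosets of $B(0,q^{a})$), we get $1_\Delta(\xi)=1_\Delta(t)$. Hence $P_\Delta T_\Omega=T_\Omega P_\Delta$.

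From $T_\Omega^2=T_\Omega$ and the commutation, $\mathcal{L}=T_\Omega P_\Delta T_\Omega=T_\Omega^2 P_\Delta=T_\Omega P_\Delta$, a product of two commuting orthogonal projections, hence itself an orthogonal projection. Therefore every eigenvalue $\lambda_k(\Omega,\Delta)$ equals $0$ or $1$. Finally, by Lemma~\ref{L_property}(e) the number of eigenvalues equal to $1$, that is the rank of $\mathcal{L}$, equals $\sum_k \lambda_k(\Omega,\Delta)=\mathfrak{m}(\Omega)\mathfrak{m}(\Delta)$; since the eigenvalues are listed in decreasing order, this gives $\lambda_k=1$ for $1\le k\le\mathfrak{m}(\Omega)\mathfrak{m}(\Delta)$ and $\lambda_k=0$ otherwise.

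The one genuinely substantive step is the commutation $P_\Delta T_\Omega=T_\Omega P_\Delta$; everything else is bookkeeping. Its proof rests entirely on the non-Archimedean geometry: the exact support $B(0,q^{-b})$ of $\widehat{1_\Omega}$ together with the fact that balls of radius $q^{a}\ge q^{-b}$ are cosets, so that $1_\Delta$ is constant on every set of diameter $\le q^{-b}$. The hypothesis $a+b\ge 0$ enters precisely here; were it to fail, the support of $\widehat{1_\Omega}$ would exceed the balls making up $\Delta$ and the two kernels would cease to coincide. One could alternatively avoid the projection language and argue directly that $\sum_k\lambda_k=\sum_k\lambda_k^2$ by comparing Lemma~\ref{L_property}(e) with the integral in Lemma~\ref{L_property}(f), but the commutation argument is cleaner and makes the role of $a+b\ge0$ transparent.
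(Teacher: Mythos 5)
Your proof is correct, and it takes a genuinely different route from the paper's. You show that $P_\Delta$ and $T_\Omega$ commute, by comparing the kernels $1_\Delta(\xi)\widehat{1_\Omega}(\xi-t)$ and $1_\Delta(t)\widehat{1_\Omega}(\xi-t)$: since $\Omega\in\mathcal{A}_b$ forces ${\rm supp}\,\widehat{1_\Omega}\subset B(0,q^{-b})$, and since $\Delta\in\mathcal{A}_a$ is a union of cosets of the additive subgroup $B(0,q^a)$ with $q^{-b}\le q^a$ (this is exactly $a+b\ge 0$), the ultrametric inequality gives $1_\Delta(\xi)=1_\Delta(t)$ wherever the kernel is nonzero. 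Hence $\mathcal{L}=T_\Omega P_\Delta T_\Omega=T_\Omega P_\Delta$ is a product of commuting orthogonal projections, so it is itself a (compact, hence finite-rank) orthogonal projection with range $L^2(\Omega)^{\widehat{}}\cap L^2(\Delta)$, its eigenvalues are $0$ or $1$, and Lemma~\ref{L_property}(e) counts the multiplicity of $1$ as $\mathfrak{m}(\Omega)\mathfrak{m}(\Delta)$. The paper argues quite differently: it first settles the ball--ball case $\Omega=B(0,q^b)$, $\Delta=B(0,q^a)$ (Lemma~\ref{Eigen-ball}) by matching the trace identities (e) and (f) against the explicit integral of Lemma~\ref{D-int}, and then bootstraps to general finite unions of balls using superadditivity (g), monotonicity and symmetry, concluding from $\sum_k\lambda_k=\sum_k\lambda_k^2$ and $0\le\lambda_k\le 1$ that every eigenvalue is $0$ or $1$. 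Your argument is shorter, exhibits the projection structure of $\mathcal{L}$ explicitly (the paper only records it a posteriori, after Lemma~\ref{Eigen-ball}), and isolates precisely where the hypothesis $a+b\ge 0$ enters; on the other hand, the paper's trace-based scheme is the faithful local-field analogue of Landau's argument in $\mathbb{R}^d$, where no such exact commutation is available and the eigenvalues are only approximately $0$ or $1$, so it is the version that generalizes beyond the non-Archimedean setting. The alternative you mention at the end --- deriving $\sum_k\lambda_k=\sum_k\lambda_k^2$ directly from (e), (f) and the support of $\widehat{1_\Omega}$ --- is essentially the paper's route with the reduction to single balls bypassed.
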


\begin{proof}
     Assume that
     $$
        \Delta = \bigsqcup_{i\in I} B(x_i, q^a), \qquad  \Omega = \bigsqcup_{j\in J} B(y_j, q^b).
     $$
     First we suppose that $\sharp I =1$ and $x_1=0$. Then
     \begin{eqnarray*}
        \mathfrak{m}(\Omega) \mathfrak{m}(B(0, q^a))
        &=& \sum_k \lambda_k(B(0, q^a), \Omega)\\
        &\ge& \sum_k \lambda_k^2(B(0, q^a), \Omega)\\
        &\ge& \sum_j \sum_k\lambda_k^2(B(0, q^a), B(y_j, q^b)).
     \end{eqnarray*}
     By Lemma \ref{Eigen-ball} and the translation invariance, the last sum over $k$
     is equal to $\mathfrak{m}(B(0, q^a))\mathfrak{m}(B(y_j, q^b))$. Thus the last double sum is equal to
     $\sum_j \mathfrak{m}(B(0, q^a))\mathfrak{m}(B(y_j, q^b))$, that is $\mathfrak{m}(\Omega)\mathfrak{m(}B(0, q^a))$.
     Thus we have proved the equality
     $$\sum_k \lambda_k(B(0, q^a), \Omega)
        = \sum_k \lambda_k^2(B(0, q^a), \Omega),$$
        which implies that $\lambda_k(B(0, q^a), \Omega)$ is either $1$ or $0$.

        Then we consider $\lambda_k(\Delta, \Omega)$ by decomposing $\Delta$
        into $B(x_i, q^a)$'s. Using the same argument as above and what we have just proved, we can
        prove the desired result.
\end{proof}

\subsection{Beurling density}

The {\em  upper and lower Beurling density} of a discrete set $\Gamma$ in $K^d$ are defined as follows
\begin{eqnarray*}
    D^+(\Gamma) &=& \limsup_{n\to \infty} \sup_{x\in K^d}\frac{\mathfrak{n}(\Gamma \cap B(x, q^n))}{q^{dn}},
   \\
    D^-(\Gamma) &=& \liminf_{n\to \infty} \inf_{x\in K^d}\frac{\mathfrak{n}(\gamma \cap B(x, q^n))}{q^{dn}}.
\end{eqnarray*}
where $\mathfrak{n}(B)=\sharp B$ is the counting of points in $B$.
As in the Archimedean case \cite{Landau1967}, the above limits exist and we can replace $B(x, q^n)$
by $x + \mathfrak{p}^{-n} I$ where $I$ is any compact set such that $\mathfrak{m}(I)=1$
(the values $D^+(\Gamma)$ and $D^-(\Gamma)$ don't depend on $I$).
If $D^+(\Gamma)= D^-(\Gamma)$, the common value is defined to be the
{\em  Beurling density}.
It is obvious that the Beurling density of a standard quasi-lattice is equal to $1$.

In the following, we give a proof of Theorem \ref{Thm-Landau}. The first lemma is obvious.

\begin{lemma} Let $h\in L^2(\widehat{K}^d)$ be a function such that
$$
\mbox{\rm supp} h \subset B(0, \delta); \qquad \forall x\in \Omega, |\check{h}(x)|\ge 1.$$
Assume $f \in L^2(\Omega)^{\widehat{}}$. Consider
 $$g(y):= f*h(y)= \int_{B(y, \delta)} f(z) h(y-z)dz.
 $$
 Then $ g \in L^2(\Omega)^{\widehat{}}$, $\|g\|\ge \|f\|$ and
$$
    \forall y, \qquad |g(y)|^2 \le \|h\|^2 \int_{B(y, \delta)} |f(z)|^2 dz.
$$
\end{lemma}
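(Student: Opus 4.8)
The plan is to pass to the Fourier side, where the convolution $g=f*h$ becomes the product $\check g = \check f\cdot\check h$, and then to read off all three assertions from this factorization together with the two hypotheses on $h$. First I would observe that the integral defining $g(y)$ is genuinely the full convolution $f*h$, because $\mathrm{supp}\,h\subset B(0,\delta)$ forces $h(y-z)=0$ as soon as $z\notin B(y,\delta)$. A direct computation---substituting $w=y-z$ and using Fubini---then yields the identity
$$
\check g(x)=\int g(y)\,\chi(x\cdot y)\,dy
=\Big(\int f(z)\,\chi(x\cdot z)\,dz\Big)\Big(\int h(w)\,\chi(x\cdot w)\,dw\Big)
=\check f(x)\,\check h(x).
$$
The pointwise estimate is the one assertion needing none of this: applying Cauchy--Schwarz to $g(y)=\int_{B(y,\delta)}f(z)h(y-z)\,dz$ and then translating by $w=y-z$ gives $\int_{B(y,\delta)}|h(y-z)|^2\,dz=\int_{B(0,\delta)}|h|^2=\|h\|^2$, which is exactly the claimed bound.

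For the membership $g\in L^2(\Omega)^{\widehat{}}$, I would recall that $f\in L^2(\Omega)^{\widehat{}}$ means by definition that $\check f\in L^2(\Omega)$, i.e.\ $\check f$ is supported on $\Omega$; hence $\check g=\check f\cdot\check h$ is supported on $\Omega$ as well. Since $h$ is compactly supported and square integrable it is also integrable, so $\check h$ is bounded---indeed $|\check h(x)|\le\|h\|\,\mathfrak{m}(B(0,\delta))^{1/2}$ by Cauchy--Schwarz. Consequently $\check g\in L^2(\Omega)$, and therefore $g=\widehat{\check g}$ belongs to $L^2(\Omega)^{\widehat{}}$.

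Finally, the norm inequality will follow from Plancherel: using $\|g\|=\|\check g\|$ and the factorization above,
$$
\|g\|^2=\int_\Omega|\check f(x)|^2\,|\check h(x)|^2\,dx
\ge\int_\Omega|\check f(x)|^2\,dx=\|\check f\|^2=\|f\|^2,
$$
where the inequality is precisely the hypothesis $|\check h(x)|\ge 1$ for $x\in\Omega$. The argument is otherwise routine; the only point requiring care is the bookkeeping on $\check h$, namely that its lower bound $|\check h|\ge 1$ on $\Omega$ drives the norm inequality, while its (automatic) upper boundedness is what guarantees $\check g\in L^2$ so that $g$ genuinely lies in $L^2(\Omega)^{\widehat{}}$.
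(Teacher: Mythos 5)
Your proof is correct, and there is nothing in the paper to compare it against: the lemma is introduced there with the sentence ``The first lemma is obvious'' and no argument is given, so your route --- Cauchy--Schwarz plus translation invariance of the Haar measure for the pointwise bound, and the factorization $\check g=\check f\,\check h$ together with ${\rm supp}\,\check f\subset\Omega$, $|\check h|\ge 1$ on $\Omega$, and Plancherel for the other two claims --- is exactly the standard argument the author evidently had in mind. The one step worth tightening is the appeal to Fubini in deriving $\check g=\check f\,\check h$: since $f$ need not belong to $L^1(\widehat K^d)$, either extend the identity from the dense subspace $L^1\cap L^2$ by continuity (the usual $L^2\ast L^1$ convolution theorem, applicable because $h$, being square-integrable with compact support, is integrable), or run Fubini on the physical side by verifying directly that $g=(\check f\,\check h)^{\widehat{}}$, which is legitimate because $\check f\in L^1$ (it is supported on the finite-measure set $\Omega$) and $h\in L^1$.
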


\begin{lemma} Let $\Omega \subset K^d$ and $\Delta \subset \widehat{K}^d$ be  bounded Borel sets and let $\Lambda\subset \widehat{K}^d$ be a discrete set with separation constant $\delta >0$. Let $$
\Delta^+ =\{x\in \widehat{K}^d: d(x, \Delta) <\delta/2\},
\qquad \Delta^-= \{x \in \widehat{K}^d: d(x, \Delta^c)>\delta/2\}.$$
\indent {\rm (a)} If $\Lambda$ is a set of sampling of $L^2(\Omega)$, there exists a constant
$0<\alpha<1$ independent of $\Delta$ such that
$$\lambda_{\mathfrak{n}(\Delta^+ \cap \Lambda)+1}(\Omega, \Delta) \le \alpha.
$$
\indent {\rm (b)} If $\Lambda$ is a set of interpolation of $L^2(\Omega)$,
there exists a constant
$\beta>0$ independent of $\Delta$ such that
$$\lambda_{\mathfrak{n}(\Delta^- \cap \Lambda)}(\Omega, \Delta) \ge \beta.
$$
\end{lemma}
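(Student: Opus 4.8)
The plan is to feed carefully chosen trial subspaces into the Weyl--Courant estimates of Lemma \ref{L_property}(h), using as building blocks the point-evaluation representers. Since $\Omega$ is bounded I may fix an integer $b$ with $\Omega\subset B(0,q^b)$ and $q^{-b}<\delta/2$ (enlarging $b$ only helps). The decisive local-field input is that every $f\in L^2(\Omega)^{\widehat{}}$ is \emph{locally constant}: if $f=\widehat{\phi}$ with $\phi$ supported in $B(0,q^b)$, then $\chi((\xi-\xi')\cdot x)=1$ whenever $|\xi-\xi'|\le q^{-b}$ and $|x|\le q^b$, so $f$ is constant on every ball of radius $q^{-b}$. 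Consequently each point value is an exact local average, $|f(\lambda)|^2=q^{db}\int_{B(\lambda,q^{-b})}|f|^2$, and since $\Lambda$ has separation $\delta>q^{-b}$ the balls $\{B(\lambda,q^{-b})\}_{\lambda\in\Lambda}$ are pairwise disjoint. I would also record that evaluation $f\mapsto f(\lambda)$ is bounded on $L^2(\Omega)^{\widehat{}}$, with representer $e_\lambda(\xi)=\widehat{1_\Omega}(\xi-\lambda)$, so that orthogonality to $e_\lambda$ is exactly the vanishing $f(\lambda)=0$; and that on $L^2(\Omega)^{\widehat{}}$ one has $\langle\mathcal{L}f,f\rangle=\|P_\Delta f\|^2$, which is what makes (h) usable.

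For (a) I would take $C_N=\mathrm{span}\{e_\lambda:\lambda\in\Lambda\cap\Delta^+\}$, of dimension at most $N=\mathfrak{n}(\Delta^+\cap\Lambda)$. If $f\in L^2(\Omega)^{\widehat{}}\cap C_N^\perp$ with $\|f\|=1$, then $f(\lambda)=0$ for every $\lambda\in\Lambda\cap\Delta^+$, so the sampling inequality reduces to a sum over $\Lambda\setminus\Delta^+$. For $\lambda\notin\Delta^+$ one has $d(\lambda,\Delta)\ge\delta/2>q^{-b}$, and the ultrametric inequality forces $B(\lambda,q^{-b})\subset\Delta^c$; summing the local-average identity over these disjoint balls gives $\sum_{\lambda\in\Lambda\setminus\Delta^+}|f(\lambda)|^2\le q^{db}\int_{\Delta^c}|f|^2=q^{db}(1-\|P_\Delta f\|^2)$. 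Combined with $1\le C\sum_{\lambda\in\Lambda\setminus\Delta^+}|f(\lambda)|^2$ this yields $\|P_\Delta f\|^2\le\alpha:=1-1/(Cq^{db})<1$, and the first estimate in (h) gives $\lambda_{N+1}(\Omega,\Delta)\le\alpha$ (if $\dim C_N<N$, monotonicity of the $\lambda_k$ only strengthens this). The constant $\alpha$ depends only on the sampling constant $C$ and on $q,d,b$, hence not on $\Delta$.

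For (b) I would test with $W=\mathrm{span}\{e_\lambda:\lambda\in\Lambda\cap\Delta^-\}$, of dimension $M=\mathfrak{n}(\Delta^-\cap\Lambda)$. The same local-average identity, read over $\Delta^-$, gives $\sum_{\lambda\in\Lambda\cap\Delta^-}|f(\lambda)|^2=q^{db}\int_{\sqcup B(\lambda,q^{-b})}|f|^2\le q^{db}\|P_\Delta f\|^2$, since for $\lambda\in\Delta^-$ one has $d(\lambda,\Delta^c)>\delta/2\ge q^{-b}$ and so $B(\lambda,q^{-b})\subset\Delta$. It remains to bound $\sum_{\lambda\in\Lambda\cap\Delta^-}|f(\lambda)|^2$ below by $\|f\|^2$ on $W$, uniformly in $\Delta$. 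Here I would use interpolation: $\phi\mapsto(\widehat{\phi}(\lambda))$ is a bounded surjection $L^2(\Omega)\to\ell^2(\Lambda)$ (boundedness is the Bessel bound $q^{db}$ from the disjoint balls), so the open mapping theorem furnishes a right inverse of norm $C_I$, equivalently the Riesz lower bound $\|\sum_\lambda a_\lambda e_\lambda\|^2\ge C_I^{-2}\|a\|^2$ for all $a\in\ell^2(\Lambda)$; restricting $a$ to $\Lambda\cap\Delta^-$ keeps the same constant. Writing $f=\sum a_\lambda e_\lambda$ and letting $G$ be the Gram operator of $\{e_\lambda\}_{\lambda\in\Lambda\cap\Delta^-}$, the Riesz bound reads $G\ge C_I^{-2}I$, while $f(\lambda)=(Ga)_\lambda$ gives $\sum|f(\lambda)|^2=\langle G^2a,a\rangle\ge\lambda_{\min}(G)\langle Ga,a\rangle\ge C_I^{-2}\|f\|^2$. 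Chaining the two inequalities yields $\|P_\Delta f\|^2\ge\beta:=q^{-db}C_I^{-2}$ on $W$, and the second estimate in (h) gives $\lambda_M(\Omega,\Delta)\ge\beta$.

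The routine parts are the ultrametric containments $B(\lambda,q^{-b})\subset\Delta^c$ (resp. $\subset\Delta$) and the elementary Gram-operator manipulation. The delicate point, and the one I would be most careful about, is the uniformity in $\Delta$ of the constants, especially $\beta$ in (b): the lower bound must originate from the \emph{global} interpolation constant $C_I$ (which sees only $\Omega$ and $\Lambda$) and be transmitted without loss to the finite subfamily indexed by $\Lambda\cap\Delta^-$. The inequality $\langle G^2a,a\rangle\ge\lambda_{\min}(G)\langle Ga,a\rangle$, together with the fact that passing to a subfamily cannot decrease the Riesz lower bound, is exactly what guarantees this, so no $\Delta$-dependence leaks in.
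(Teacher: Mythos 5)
Your proof is correct. For part (a) it is essentially the paper's own argument in different clothing: the paper takes the bump $h(\xi)=q^{d\ell}1_{B(0,q^{-\ell})}(\xi)$ with $q^{-\ell}<\delta/2$ and $\Omega\subset B(0,q^{\ell})$, so that $\check h\equiv 1$ on $\Omega$ and hence $g=f*h=f$ for every $f\in L^2(\Omega)^{\widehat{}}$; its trial space is spanned by $h(\lambda-\cdot)=q^{d\ell}1_{B(\lambda,q^{-\ell})}$, and orthogonality to these functions is, by the very local constancy you invoke, exactly your condition $f(\lambda)=0$ for $\lambda\in\Delta^+\cap\Lambda$. From there the two proofs run identically (sampling inequality, disjoint balls contained in $\Delta^c$, upper Weyl--Courant estimate), and your $\alpha=1-1/(Cq^{db})$ is the paper's $\alpha=1-1/(C\|h\|^2)$. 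Where you genuinely add something is part (b): the paper gives no proof at all (it says only that (b) ``can be proved similarly'' and cites Landau), whereas you supply a complete and correct one --- interpolation is surjectivity of $T\phi=(\widehat{\phi}(\lambda))_\lambda$, which together with the Bessel bound $\|T\|^2\le q^{db}$ and the open mapping theorem (equivalently: $T$ onto iff $T^*$ bounded below) gives the $\Delta$-independent Riesz lower bound $\bigl\|\sum_\lambda a_\lambda e_\lambda\bigr\|^2\ge C_I^{-2}\|a\|^2$, i.e. $G\ge C_I^{-2}I$ for the Gram matrix of any subfamily; your inequality $\langle G^2a,a\rangle\ge\lambda_{\min}(G)\langle Ga,a\rangle$ and the disjoint balls inside $\Delta$ then give $\|P_\Delta f\|^2\ge q^{-db}C_I^{-2}\|f\|^2$ on $W$, uniformly in $\Delta$. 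Two points you should state explicitly rather than leave implicit: (i) $G\ge C_I^{-2}I$ forces $G$ to be nonsingular, so $\{e_\lambda\}_{\lambda\in\Delta^-\cap\Lambda}$ is linearly independent and $\dim W=M$ exactly; this matters in (b), because the lower Weyl--Courant estimate applied to a space of dimension $m<M$ would only give $\lambda_m\ge\beta$ (the eigenvalues decrease), whereas in (a) a dimension drop is harmless; (ii) the lower estimate in Lemma \ref{L_property}(h) is legitimately applied only to subspaces of $L^2(\Omega)^{\widehat{}}$, where $\langle\mathcal{L}f,f\rangle=\|P_\Delta f\|^2$ holds --- your $W$ satisfies this since each $e_\lambda$ is the transform of $1_\Omega\chi_\lambda$, so the application is sound.
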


\begin{proof} 
Take a function $h$
satisfying the condition required in the last lemma. Such functions $h$ do exist and
we can take $h(\xi) = q^{d \ell} 1_{B(0, q^{-\ell})}(\xi)$ with $\ell$ sufficiently large
such that $q^{-\ell}< \delta/2$ and $\Omega \subset B(0, q^\ell)$. Recall that
$\check{h}(x) = 1_{B(0, q^\ell)}(x)$.

Consider the functions $h(\lambda-\cdot)$ where $\lambda \in  \Delta^+ \cap\Lambda$.
Since $\Lambda$ is separated by $\delta$ and $h$ has support in $B(0, \delta/2)$,
these functions have disjoint supports, then are linearly independent. They span a subspace $C$
of $L^2(\widehat{K}^d)$ of dimension $\mathfrak{n}(\Delta^+ \cap \Lambda)$.

Let $f \in L^2(\Omega)^{\widehat{}} \cap C^\perp$. For $\lambda \in \Delta^+\cap\Lambda$,
since $f$ is orthogonal to $h(\lambda -\cdot)$, we have $g(\lambda)=0$
by the definition of $g$ (see the last lemma). By the last lemma and the sampling property of
$\Lambda$, we get
$$
    \|f\|^2 \le \|g\|^2 \le C \sum_{\lambda\in \Lambda} |g(\lambda)|^2
    = C \sum_{\lambda \in \Lambda \cap \Delta^{+c}} |g(\lambda)|^2.
$$
By the last lemma,
$$
   |g(\lambda)|^2 \le \|h\|^2 \int_{B(\lambda, q^{-\ell})} |f(z)|^2 dz.
$$
Notice that $B(\lambda, q^{-\ell}) \subset \Delta^c$ whence $\lambda \in (\Delta^{+})^c$. So,
$$
  \|f\|^2  \le C\|h\|^2 \int_{\Delta^c} |f(z)|^2 d z = C\|h\|^2 (\|f\|^2 - \|P_\Delta f\|^2).
$$
It follows that $\|P_\Delta f\|^2/\|f\|^2 \le 1-1/(C\|h\|^2)<1$. By the Weyl-Courant estimate,
we get $\lambda_{\mathfrak{n}(\Delta^+\cap \Lambda) +1} \le \alpha$ with
$$
   \alpha = 1-1/(C\|h\|^2)<1.
$$
Thus we have proved the assertion (a). The assertion (b) can be proved
similarly, following a similar argument (see \cite{Landau1967}).
\end{proof}

\begin{lemma} Let  $\Omega \in \mathcal{A}_b$ for some  $b\in \mathbb{Z}$.
Let $\Lambda\subset \widehat{K}^d$ be a discrete set of separation $\delta$. \\
\indent {\rm (a)} Suppose that $\Lambda$ is a set of sampling of $L^2(\Omega)$. Then
there is a constant $c_1>0$ depending only on $\delta$ and $d$ such that for
all $a\in \mathbb{Z}$ with $a+b\ge 0$ we have
$$\min_{x\in K^d}\mathfrak{n}(B(x, q^a) \cap \Lambda)\ge m(\Omega) m(B(0, q^a)
- c_1q^{-a}m(B(0, q^a).$$
\indent {\rm (b)} Suppose that $\Lambda$ is a set of interpolation of $L^2(\Omega)$. Then
there is a constant $c_2>0$ depending only on $\delta$ and $d$ such that for
all $a\in \mathbb{Z}$ with $a+b\ge 0$ we have
$$\max_{x\in K^d}\mathfrak{n}(B(x, q^a) \cap \Lambda)\le m(\Omega) m(B(0, q^a)
+ c_2q^{-a}m(B(0, q^a).$$
\end{lemma}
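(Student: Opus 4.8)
The plan is to test the sampling (resp.\ interpolation) hypothesis against a single ball and to read the cardinality $\mathfrak{n}(B(x,q^a)\cap\Lambda)$ off the eigenvalue structure of the associated Landau operator. First I would fix $x\in K^d$ and take $\Delta=B(x,q^a)$, which lies in $\mathcal{A}_a$. Since $\Omega\in\mathcal{A}_b$ and $a+b\ge 0$, the preceding dichotomy lemma applies: the operator $\mathcal{L}_{\Omega,\Delta}$ has only the eigenvalues $1$ and $0$, the value $1$ occurring with multiplicity $M:=\mathfrak{m}(\Omega)\,\mathfrak{m}(B(0,q^a))$. This all-or-nothing spectrum, a genuinely non-Archimedean feature (a ball has no fractional eigenvalue), is what makes the whole estimate work.

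For part (1) I would apply part (a) of the previous lemma to this $\Delta$, producing a constant $\alpha\in(0,1)$, independent of $x$ and $a$, with $\lambda_{\mathfrak{n}(\Delta^+\cap\Lambda)+1}(\Omega,\Delta)\le\alpha$. Because every eigenvalue is either $1$ or $0$ and $\alpha<1$, the $(\mathfrak{n}(\Delta^+\cap\Lambda)+1)$-th eigenvalue must already be $0$, so it cannot sit among the first $M$ ones; hence $\mathfrak{n}(\Delta^+\cap\Lambda)\ge M=\mathfrak{m}(\Omega)\,\mathfrak{m}(B(0,q^a))$. It then remains only to pass from $\Delta^+$ to $\Delta$, that is, to bound the number of points of $\Lambda$ in the shell $\Delta^+\setminus\Delta$, and to take the infimum over $x$.

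The boundary estimate is the technical heart, and here the ultrametric makes matters cleaner than over $\mathbb{R}^d$. If $q^{a+1}\ge\delta/2$, then by the isosceles property every point outside $B(x,q^a)$ is at distance $\ge q^{a+1}\ge\delta/2$ from $\Delta$, so $\Delta^+=\Delta$ and the shell is \emph{empty}; in this regime the bound already holds with no error term. In the remaining small-$a$ range $q^{a+1}<\delta/2$ one checks that $\Delta^+$ is itself a ball of radius $<\delta/2<\delta$, which by the separation of $\Lambda$ meets $\Lambda$ in at most one point. Combining the two cases, together with $\mathfrak{n}(B(x,q^a)\cap\Lambda)\ge 0$, bounds the shell count by $c_1 q^{(d-1)a}=c_1 q^{-a}\mathfrak{m}(B(0,q^a))$ with $c_1$ depending only on $\delta$ and $d$; subtracting and taking $\inf_x$ yields (1). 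Note that in the regime $a\to\infty$, the only one needed for the Beurling density, the shell is empty and the inequality holds even with $c_1=0$.

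Part (2) is entirely dual. Taking again $\Delta=B(x,q^a)$ and invoking part (b) of the previous lemma gives $\beta>0$ with $\lambda_{\mathfrak{n}(\Delta^-\cap\Lambda)}(\Omega,\Delta)\ge\beta$; since the eigenvalues are $0$ or $1$ and $\beta>0$, the $\mathfrak{n}(\Delta^-\cap\Lambda)$-th eigenvalue equals $1$, forcing $\mathfrak{n}(\Delta^-\cap\Lambda)\le M$. For a ball one computes $d(y,\Delta^c)=q^{a+1}$ for every $y\in\Delta$, so $\Delta^-$ equals $\Delta$ when $q^{a+1}>\delta/2$ and is empty otherwise, and the same separation argument controls $\mathfrak{n}((\Delta\setminus\Delta^-)\cap\Lambda)$ by $c_2 q^{(d-1)a}$; adding and maximizing over $x$ gives (2). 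I expect the only delicate point throughout to be the uniform bookkeeping of this boundary shell over the finitely many small values of $a$ with $a+b\ge 0$; the large-$a$ behaviour, which is all that feeds into Theorem~\ref{Thm-Landau}, is immediate from the emptiness of the shell.
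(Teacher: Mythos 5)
Your proposal has the same skeleton as the paper's proof: fix $\Delta=B(x,q^a)\in\mathcal{A}_a$, invoke the preceding dichotomy lemma (eigenvalues of $\mathcal{L}_{\Omega,\Delta}$ are $1$, with multiplicity $M=\mathfrak{m}(\Omega)\mathfrak{m}(B(0,q^a))$, and $0$), feed this into parts (a), (b) of the sampling/interpolation lemma to get $\mathfrak{n}(\Delta^+\cap\Lambda)\ge M$, resp.\ $\mathfrak{n}(\Delta^-\cap\Lambda)\le M$, and then correct by the number of $\Lambda$-points in the shell before taking $\inf_x$ or $\sup_x$. Where you genuinely differ is the shell estimate, and your version is the one adapted to the non-Archimedean setting: the paper transplants Landau's Euclidean packing argument (disjoint balls $B(\lambda,\delta/2)$ compared with the volume of a thickened boundary) and asserts the surface-type bound $\mathfrak{n}((\Delta^+\setminus\Delta)\cap\Lambda)\le cq^{-a}\mathfrak{m}(\Delta)$, whereas you use the isosceles property to show that either $q^{a+1}\ge\delta/2$, in which case $\Delta^+=\Delta$ and the shell is empty, or else $\Delta^+$ is a single ball of radius $<\delta$ and so meets $\Lambda$ in at most one point (dually, $\Delta^-\in\{\Delta,\emptyset\}$ in part (b)). This is sharper than the paper's bound and makes explicit that in the regime $a\to\infty$, the only one used for Theorem \ref{Thm-Landau}, the error term is zero.

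The delicate point you flag at the end is a real one, and it afflicts the paper's own write-up equally. When $d\ge2$ the admissible range $a+b\ge0$ contains negative $a$ (whenever $b>0$), and there $q^{-a}\mathfrak{m}(B(0,q^a))=q^{(d-1)a}<1$; so a shell count of $1$ is \emph{not} bounded by $c_1q^{(d-1)a}$ for any constant $c_1=c_1(\delta,d)$, and neither your ``at most one point'' bound nor the paper's packing inequality yields the stated error term in the regime $q^{a+1}<\delta/2$. The gap can be closed as follows. For part (b) nothing is needed beyond your observations: $a+b\ge0$ and $\Omega\in\mathcal{A}_b$ give $M\ge q^{d(a+b)}\ge1$, while in that regime $\Delta$ itself has radius $<\delta$, so $\mathfrak{n}(\Delta\cap\Lambda)\le1\le M$. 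For part (a) the regime is in fact vacuous: if $\delta>2q^{1-b}$, apply your large-radius argument to $\Delta'=B(x,q^{a'})$ with $q^{a'+1}\ge\delta/2$ (shell empty) to get $\mathfrak{n}(B(x,q^{a'})\cap\Lambda)\ge\mathfrak{m}(\Omega)q^{da'}\ge q^{d(a'+b)}$, while $\delta$-separation allows at most one point of $\Lambda$ in each of the $q^{d(a'+b-1)}$ balls of radius $q^{1-b}$ composing $B(x,q^{a'})$, a contradiction; hence $\delta\le2q^{1-b}$, so $q^{a+1}\ge\delta/2$ for every admissible $a$ and the shell is always empty in part (a). With these two remarks your argument is complete (indeed with $c_1=c_2=0$), and it is, if anything, more careful than the paper's, whose Euclidean-style shell bound silently fails in exactly the same corner.
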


\begin{proof} Observe that  $\mathfrak{n}(\Delta^-\cap\Lambda)\le \mathfrak{n}(\Delta\cap\Lambda)  \le \mathfrak{n}(\Delta^+\cap\Lambda)$.

(a) Assume that the minimum is attained at $x_0$ and let $\Delta =B(x_0, q^a)$.
Since $a+b\ge 0$,  $1$ is the unique non-zero
eigenvalue of the Landau operator $\mathcal{L}_{\Omega, \Delta}$, which has the multiplicity
$m(G)m(\Delta)$. Then   $$
\mathfrak{n}(\Delta^+\cap\Lambda) +1\ge  m(G)m(\Delta) +1,$$
i.e. $\mathfrak{n}(\Delta^+\cap\Lambda)\ge m(G)m(\Delta)$. However, on one hand
$$
   \mathfrak{n}(\Delta^+\cap\Lambda) = \mathfrak{n}(\Delta\cap\Lambda)+\mathfrak{n}((\Delta^+\setminus\Delta)\cap\Lambda).
$$
On the other hand,
$$
   \mathfrak{n}((\Delta^+\setminus\Delta)\cap\Lambda)
   (\delta/2)^d \le \sum_{\lambda \in(\Delta^+\setminus\Delta)\cap\Lambda}
    \mathfrak{m}(B(\lambda, \delta/2))
$$
which implies that for some constant $c>0$ depending on $\delta$ and $d$ we have
$$
      \mathfrak{n}((\Delta^+\setminus\Delta)\cap\Lambda) \le c q^{-a} \mathfrak{m}(\Delta).
$$
Thus
$$
    \mathfrak{n}(B(x_0, q^a)\cap\Lambda)
    \ge \mathfrak{m}(\Omega)\mathfrak{m}(B(x_0, q^a)) - c q^{-a} \mathfrak{m}(B(x_0, q^a)).
$$

(b) The proof is similar to that of (a), by using (b) in the last lemma.
\end{proof}

Based on the above lemmas, we can prove Theorem \ref{Thm-Landau} by mimicking \cite{Landau1967}.

\setcounter{equation}{0}

\end{document}